\numberwithin{equation}{section}
\theoremstyle{plain}
\newtheorem{theorem}{Theorem}[section]
\newtheorem{lemma}[theorem]{Lemma}
\newtheorem{proposition}[theorem]{Proposition}
\newtheorem{corollary}[theorem]{Corollary}
\theoremstyle{definition}
\newtheorem{question}[theorem]{Question}
\begin{document}

\title[Commuting Traces and Lie Isomorphisms]
{Commuting Traces and Lie Isomorphisms on Generalized Matrix
Algebras}

\author{Zhankui Xiao and Feng Wei}

\address{Xiao: School of Mathematical Science, Huaqiao University,
Quanzhou, Fujian, 362021, P. R. China}

\email{zhkxiao@gmail.com}

\address{Wei: School of Mathematics, Beijing Institute of
Technology, Beijing, 100081, P. R. China}

\email{daoshuowei@gmail.com}\email{daoshuo@hotmail.com}

\begin{abstract}
Let $\mathcal{G}$ be a generalized matrix algebra over a commutative
ring $\mathcal{R}$, ${\mathfrak q}\colon \mathcal{G}\times
\mathcal{G}\longrightarrow \mathcal{G}$ be an $\mathcal{R}$-bilinear
mapping and ${\mathfrak T}_{\mathfrak q}\colon:
\mathcal{G}\longrightarrow \mathcal{G}$ be a trace of
$\mathfrak{q}$. We describe the form of ${\mathfrak T}_{\mathfrak
q}$ satisfying the condition ${\mathfrak T}_{\mathfrak
q}(G)G=G{\mathfrak T}_{\mathfrak q}(G)$ for all $G\in \mathcal{G}$.
The question of when ${\mathfrak T}_{\mathfrak q}$ has the proper
form is considered. Using the aforementioned trace function, we
establish sufficient conditions for each Lie isomorphism of
$\mathcal{G}$ to be almost standard. As applications we characterize
Lie isomorphisms of full matrix algebras, of triangular algebras and
of certain unital algebras with nontrivial idempotents. Some further
research topics related to current work are proposed at the end of
this article.
\end{abstract}

\subjclass[2000]{15A78, 15A86, 16W10}

\keywords{Generalized matrix algebra, commuting trace, Lie
isomorphism}

\thanks{The work of the first author is supported by a research
foundation of Huaqiao University (Grant No. 10BS323).}

\maketitle


\section{Introduction}
\label{xxsec1}

Let $\mathcal{R}$ be a commutative ring with identity, $\mathcal{A}$
be a unital algebra over $\mathcal{R}$ and $\mathcal{Z(A)}$ be the
center of $\mathcal{A}$. Let us denote the commutator or the Lie
product of the elements $a, b\in \mathcal{A}$ by $[a, b]=ab-ba$.
Recall that an $\mathcal{R}$-linear mapping ${\mathfrak f}:
A\longrightarrow \mathcal{A}$ is said to be \textit{commuting} if
$[{\mathfrak f}(a), a]=0$ for all $a\in \mathcal{A}$. When we
investigate a commuting mapping, the principal task is to describe
its form. The identity mapping and every mapping which has its range
in $\mathcal{Z(A)}$ are two classical examples of commuting
mappings. Furthermore, the sum and the pointwise product of
commuting mappings are also commuting mappings. We encourage the
reader to read the well-written survey paper \cite{Bresar3}, in
which the author presented the development of the theory of
commuting mappings and their applications in details.

Let $n$ be a positive integer and $\mathfrak{q}\colon
\mathcal{A}^n\longrightarrow \mathcal{A}$. We say that
$\mathfrak{q}$ is $n$-\textit{linear} if ${\mathfrak q}(a_1,\cdots,
a_n)$ is $\mathcal{R}$-linear in each variable $a_i$, that is,
${\mathfrak q}(a_1, \cdots, ra_i+sb_i, \cdots, a_n)=r {\mathfrak
q}(a_1, \cdots, a_i, \cdots, a_n)+s{\mathfrak q}(a_1, \cdots, b_i,
\cdots, a_n)$ for all $r, s\in \mathcal{R}, a_i, b_i\in \mathcal{A}$
and $i=1, 2, \cdots, n$. The mapping ${\mathfrak T}_{\mathfrak
q}\colon \mathcal{A}\longrightarrow \mathcal{A}$ defined by
${\mathfrak T}_{\mathfrak q}(a)={\mathfrak q}(a, a, \cdots, a)$ is
called a \textit{trace} of ${\mathfrak q}$. We say that a commuting
trace ${\mathfrak T}_{\mathfrak q}$ is \textit{proper} if it is of
the form
$$
{\mathfrak T}_{\mathfrak q}(a)=\sum_{i=0}^n\mu_i(a)a^{n-i},
\hspace{8pt} \forall a\in \mathcal{A},
$$
where $\mu_i(0\leq i\leq n)$ is a mapping from $\mathcal{A}$ into
$\mathcal{Z(A)}$ and each $\mu_i(0\leq i\leq n)$ is in fact a trace
of the $i$-linear mapping ${\mathfrak q}_i$ from $\mathcal{A}^i$
into $\mathcal{Z(A)}$. Let $n=1$ and ${\mathfrak f}\colon
\mathcal{A}\longrightarrow \mathcal{A}$ be an $\mathcal{R}$-linear
mapping. In this case, an arbitrary trace ${\mathfrak T}_{\mathfrak
f}$ of ${\mathfrak f}$ exactly equals to itself. Moreover, if a
commuting trace ${\mathfrak T}_{\mathfrak f}$ of ${\mathfrak f}$ is
proper, then it has the form
$$
{\mathfrak T}_{\mathfrak f}(a)=z a+\eta(a), \hspace{8pt} \forall
a\in \mathcal{A},
$$
where $z\in \mathcal{Z(A)}$ and $\eta$ is an $\mathcal{R}$-linear
mapping from $A$ into $\mathcal{Z(A)}$. Let us see the case of
$n=2$. Suppose that ${\mathfrak g}\colon \mathcal{A}\times
\mathcal{A}\longrightarrow \mathcal{A}$ is an $\mathcal{R}$-bilinear
mapping. If a commuting trace ${\mathfrak T}_{\mathfrak g}$ of
${\mathfrak g}$ is proper, then it is of the form
$$
{\mathfrak T}_{\mathfrak g}(a)=z a^2+\mu(a)a+\nu(a), \hspace{8pt}
\forall a\in \mathcal{A},
$$
where $z\in \mathcal{Z(A)}$, $\mu$ is an $\mathcal{R}$-linear
mapping from $A$ into $\mathcal{Z(A)}$ and $\nu$ is a trace of some
bilinear mapping. It was Bre\v{s}ar who initiated the study of
commuting traces of multilinear mappings in \cite{Bresar1, Bresar2},
where he investigated the structure of commuting traces of
(bi-)linear mappings on prime rings. It has turned out that this
study is closely related to the problem of characterizing Lie
isomorphisms or Lie derivations of associative rings
\cite{BeidarMartindaleMikhalev}. Lee et al further generalized
Bre\v{s}ar's results by showing that each commuting trace of an
arbitrary multilinear mapping on a prime ring has the so-called
proper form \cite{LeeWongLinWang}.

Cheung in \cite{Cheung2} studied commuting mappings of triangular
algebras (e.g., of upper triangular matrix algebras and nest
algebras). He determined the class of triangular algebras for which
every commuting mapping is proper. Xiao and Wei \cite{XiaoWei1}
extended Cheung's result to the generalized matrix algebra case.
They established sufficient conditions for each commuting mapping of
a generalized matrix algebra $
\left[\smallmatrix A & M\\
N & B \endsmallmatrix \right]$ to be proper. Motivated by the
results of Bre\v{s}ar and Cheung, Benkovi\v{c} and Eremita
\cite{BenkovicEremita} considered commuting traces of bilinear
mappings on a triangular algebra $
\left[\smallmatrix A & M\\
O & B \endsmallmatrix \right]$. They gave conditions under which
every commuting trace of a triangular algebra $
\left[\smallmatrix A & M\\
O & B \endsmallmatrix \right]$ is proper. It is worth to mention
that the form of commuting traces of multilinear mappings of upper
triangular matrix algebras was earlier described in
\cite{BeidarBresarChebotar}. One of the main aims of this article is
to provide a sufficient condition for each commuting trace of
arbitrary bilinear mapping on a generalized matrix algebra $
\left[\smallmatrix A & M\\
N & B \endsmallmatrix \right]$ to be proper. Consequently, this make
it possible for us to characterize commuting traces of bilinear
mappings on full matrix algebras, those of bilinear mappings on
triangular algebras and those of bilinear mappings on certain unital
algebras with with a nontrivial idempotent.

Another important purpose of this article is to address the Lie
isomorphisms problem of generalized matrix algebras. At his 1961 AMS
Hour Talk, Herstein proposed many problems concerning the structure
of Jordan and Lie mappings in associative simple and prime rings
\cite{Herstein}. The renowned Herstein's Lie-type mapping research
program was formulated since then. The involved Lie mappings mainly
include Lie isomorphisms, Lie triple isomorphisms, Lie derivations
and Lie triple derivations et al. Given a commutative ring
$\mathcal{R}$ with identity and two associative
$\mathcal{R}$-algebras $\mathcal{A}$ and $\mathcal{B}$, one define a
\textit{Lie isomorphism} from $\mathcal{A}$ into $\mathcal{B}$ to be
an $\mathcal{R}$-linear bijective mapping ${\mathfrak l}$ satisfying
the condition
$$
{\mathfrak l}([a, b])=[{\mathfrak l}(a), {\mathfrak l}(b)],
\hspace{8pt} \forall a, b\in \mathcal{A}.
$$
For example, an isomorphism or a negative of an anti-isomorphism of
one algebra onto another is also a Lie isomorphism. One can ask
whether the converse is true in some special cases. That is, does
every Lie isomorphism between certain associative algebras arise
from isomorphisms and anti-isomorphisms in the sense of modulo
mappings whose range is central ? If $\mathfrak{m}$ is an
isomorphism or the negative of an anti-isomorphism from
$\mathcal{A}$ onto $\mathcal{B}$ and $\mathfrak{n}$ is an
$\mathcal{R}$-linear mapping from $\mathcal{A}$ into the center
$\mathcal{Z(B)}$ of $\mathcal{B}$ such that $\mathfrak{n}([a,b])=0$
for all $a, b\in \mathcal{A}$, then the mapping
$$
\mathfrak{l}=\mathfrak{m}+\mathfrak{n} \eqno(\spadesuit)
$$
is a Lie homomorphism. We shall say that a Lie isomorphism
$\mathfrak{l}\colon A\longrightarrow B$ is \textit{standard} in the
case where it can be expressed in the preceding form $(\spadesuit)$.

The resolution of Herstein's Lie isomorphisms problem in matrix
algebra background has been well-known for a long time. Hua
\cite{Hua} proved that every Lie automorphism of the full matrix
algebra $\mathcal{M}_n(\mathcal{D})(n\geq 3)$ over a division ring
$\mathcal{D}$ is of the standard form $(\spadesuit)$. This result
was extended to the case nonlinear case by Dolinar \cite{Dolinar2}
and \v{S}emrl \cite{Semrl} and was further refined by them.
Dokovi\'{c} \cite{Dokovic} showed that every Lie automorphism of
upper triangular matrix algebras $\mathcal{T}_n(\mathcal{R})$ over a
commutative ring $\mathcal{R}$ without nontrivial idempotents has
the standard form as well. Marcoux and Sourour
\cite{MarcouxSourour1} classified the linear mappings preserving
commutativity in both directions (i.e., $[x,y] = 0$ if and only if
$[\mathfrak{f}(x), \mathfrak{f}(y)]=0$) on upper triangular matrix
algebras $\mathcal{T}_n(\mathbb{F})$ over a field $\mathbb{F}$. Such
a mapping is either the sum of an algebra automorphism of
$\mathcal{T}_n(\mathbb{F})$ (which is inner) and a mapping into the
center $\mathbb{F}I$, or the sum of the negative of an algebra
anti-automorphism and a mapping into the center $\mathbb{F}I$. The
classification of the Lie automorphisms of
$\mathcal{T}_n(\mathbb{F})$ is obtained as a consequence.
Benkovi\v{c} and Eremita \cite{BenkovicEremita} directly applied the
theory of commuting traces to the study of Lie isomorphisms on a
triangular algebra $
\left[\smallmatrix A & M\\
O & B \endsmallmatrix \right]$. They provided sufficient conditions
under which every commuting trace of $
\left[\smallmatrix A & M\\
O & B \endsmallmatrix \right]$ is proper. This is directly applied
to the study of Lie isomorphisms of $
\left[\smallmatrix A & M\\
O & B \endsmallmatrix \right]$. It turns out that under some mild
assumptions, each Lie isomorphism of $
\left[\smallmatrix A & M\\
O & B \endsmallmatrix \right]$ has the standard form $(\spadesuit)$.
On the other hand, Martindale together with some of his students
studied Lie isomorphisms problems of associative rings in a series
of papers \cite{Blau1, Blau2, Martindale1, Martindale2, Martindale4,
Martindale5, Martindale6, Martindale7, Rosen}. Speaking in a loose
manner, the problems have been resolved provided that the rings in
question contain certain nontrivial idempotents. Simultaneously, the
treatment of the problems has been extended from simple rings to
prime rings. The question whether the results on Lie isomorphisms
can be obtained in rings containing no nontrivial idempotents has
been open for a long time. The first idempotent free result on Lie
isomorphisms was obtained in 1993 by Bre\v{s}ar \cite{Bresar1}.
Under some mild technical assumptions (which were removed somewhat
later \cite{BresarSemrl}), he described the form of a Lie
isomorphism between arbitrary prime rings. This was also the first
paper based on applications of the theory of functional identities.
Just recently, Beidar, Bre\v{s}ar, Chebotar, Martindale jointly gave
a final solution to the long-standing Herstein's conjecture of Lie
isomorphisms of prime rings using the theory of functional
identities, see the paper \cite{BeidarBresarChebotarMartindale} and
references therein. Simultaneously, Lie isomorphisms between rings
and between (non-)self-adjoint operator algebras have received a
fair amount of attention and have also been intensively studied. The
involved rings and operator algebras include (semi-)prime rings, the
algebra of bounded linear operators, $C^\ast$-algebras, von Neumann
algebras, $H^\ast$-algebras, Banach space nest algebras, Hilbert
space nest algebras, reflexive algebras, see \cite{BaiDuHou1,
BaiDuHou2, BanningMathieu, CalderonGonzalez1, CalderonGonzalez2,
CalderonGonzalez3, CalderonHaralampidou, Lu, MarcouxSourour2,
Mathieu, Miers1, Miers2, QiHou1, QiHou2, Semrl, Sourour, WangLu,
YuLu, ZhangZhang}.

This is the first paper in a series of two that we are planning on
this topic. The second paper will be dedicated to studying, in more
detail, centralizing traces and Lie triple isomorphisms on
triangular algebras and those mappings on generalized matrix
algebras \cite{XiaoWei2}. The roadmap of this paper is as follows.
Section $2$ contains the definition of generalized matrix algebra
and some classical examples. In Section $3$ we provide sufficient
conditions for each commuting trace of arbitrary bilinear mapping on
a generalized matrix algebra $
\left[\smallmatrix A & M\\
N & B \endsmallmatrix \right]$ to be proper (Theorem
\ref{xxsec3.4}). And then we apply this result to describe the
commuting traces of various generalized matrix algebras. In Section
$4$ we will give sufficient conditions under which every Lie
isomorphism from a generalized matrix algebra into another one has
the standard form (Theorem \ref{xxsec4.3}). As corollaries of
Theorem \ref{xxsec4.3}, characterizations of Lie isomorphisms on
triangular algebras, on full matrix algebras and on certain unital
algebras with nontrivial idempotents are obtained. The last section
contains some potential future research topics related to our
current work.

\section{Generalized Matrix Algebras and Examples}\label{xxsec2}

Let us begin with the definition of generalized matrix algebras
given by a Morita context. Let $\mathcal{R}$ be a commutative ring
with identity. A \textit{Morita context} consists of two
$\mathcal{R}$-algebras $A$ and $B$, two bimodules $_AM_B$ and
$_BN_A$, and two bimodule homomorphisms called the pairings
$\Phi_{MN}: M\underset {B}{\otimes} N\longrightarrow A$ and
$\Psi_{NM}: N\underset {A}{\otimes} M\longrightarrow B$ satisfying
the following commutative diagrams:
$$
\xymatrix{ M \underset {B}{\otimes} N \underset{A}{\otimes} M
\ar[rr]^{\hspace{8pt}\Phi_{MN} \otimes I_M} \ar[dd]^{I_M \otimes
\Psi_{NM}} && A
\underset{A}{\otimes} M \ar[dd]^{\cong} \\  &&\\
M \underset{B}{\otimes} B \ar[rr]^{\hspace{10pt}\cong} && M }
\hspace{6pt}{\rm and}\hspace{6pt} \xymatrix{ N \underset
{A}{\otimes} M \underset{B}{\otimes} N
\ar[rr]^{\hspace{8pt}\Psi_{NM}\otimes I_N} \ar[dd]^{I_N\otimes
\Phi_{MN}} && B
\underset{B}{\otimes} N \ar[dd]^{\cong}\\  &&\\
N \underset{A}{\otimes} A \ar[rr]^{\hspace{10pt}\cong} && N
\hspace{2pt}.}
$$
Let us write this Morita context as $(A, B, M, N, \Phi_{MN},
\Psi_{NM})$. We refer the reader to \cite{Morita} for the basic
properties of Morita contexts. If $(A, B, M, N,$ $ \Phi_{MN},
\Psi_{NM})$ is a Morita context, then the set
$$
\left[
\begin{array}
[c]{cc}%
A & M\\
N & B\\
\end{array}
\right]=\left\{ \left[
\begin{array}
[c]{cc}%
a& m\\
n & b\\
\end{array}
\right] \vline a\in A, m\in M, n\in N, b\in B \right\}
$$
form an $\mathcal{R}$-algebra under matrix-like addition and
matrix-like multiplication, where at least one of the two bimodules
$M$ and $N$ is distinct from zero. Such an $\mathcal{R}$-algebra is
usually called a \textit{generalized matrix algebra} of order $2$
and is denoted by
$$
\mathcal{G}=\left[
\begin{array}
[c]{cc}%
A & M\\
N & B\\
\end{array}
\right].
$$
In a similar way, one can define a generalized matrix algebra of
order $n>2$. It was shown that up to isomorphism, arbitrary
generalized matrix algebra of order $n$ $(n\geq 2)$ is a generalized
matrix algebra of order 2 \cite[Example 2.2]{LiWei}. If one of the
modules $M$ and $N$ is zero, then $\mathcal{G}$ exactly degenerates
to an \textit{upper triangular algebra} or a \textit{lower
triangular algebra}. In this case, we denote the resulted upper
triangular algebra (resp. lower triangular algebra) by
$$\mathcal{T^U}=
\left[
\begin{array}
[c]{cc}%
A & M\\
O & B\\
\end{array}
\right]   \hspace{8pt} \left({\rm resp.} \hspace{4pt} \mathcal{T_L}=
\left[
\begin{array}
[c]{cc}%
A & O\\
N & B\\
\end{array}
\right]\right)
$$
Note that our current generalized matrix algebras contain those
generalized matrix algebras in the sense of Brown \cite{Brown} as
special cases. Let $\mathcal{M}_n(\mathcal{R})$ be the full matrix
algebra consisting of all $n\times n$ matrices over $\mathcal{R}$.
It is worth to point out that the notion of generalized matrix
algebras efficiently unifies triangular algebras with full matrix
algebras together. The distinguished feature of our systematic work
is to deal with all questions related to (non-)linear mappings of
triangular algebras and of full matrix algebras under a unified
frame, which is the admired generalized matrix algebras frame, see
\cite{LiWei, LiWykWei, XiaoWei1}.

Let us list some classical examples of generalized matrix algebras
which will be revisited in the sequel (Section \ref{xxsec3} and
Section \ref{xxsec4}). Since these examples have already been
presented in many papers, we just state their title without any
introduction. We refer the reader to \cite{LiWei, XiaoWei1} for more
details.
\begin{enumerate}
\item[(\rm a)] Unital algebras with nontrivial
idempotents;
\item[(\rm b)] Full matrix algebras;
\item[(\rm c)] Inflated algebras;
\item[(\rm d)] Upper and lower triangular matrix algebras;
\item[(\rm e)] Hilbert space nest algebras
\end{enumerate}

\section{Commuting Traces of Bilinear Mappings on Generalized
Matrix Algebras}\label{xxsec3}

In this section we will establish sufficient conditions for each
commuting trace of an arbitrary bilinear mapping on a generalized
matrix algebra $
\left[\smallmatrix A & M\\
N & B \endsmallmatrix \right]$ to be proper (Theorem
\ref{xxsec3.4}). Consequently, we are able to describe commuting
traces of bilinear mappings on triangular algebras, on full matrix
algebras and on certain unital algebras with nontrivial idempotents.
The most important is that Theorem \ref{xxsec3.4} will be used to
characterize Lie isomorphisms from a generalized matrix algebras
into another in Section \ref{xxsec4}. In addition, Beidar,
Bre\v{s}ar and Chebotar in \cite{BeidarBresarChebotar} described the
form of commuting traces of multilinear mappings on upper triangular
matrix algebras. Motivated by their joint work, we propose a
conjecture concerning commuting traces of multilinear mappings on
generalized matrix algebras.

Throughout this section, we denote the generalized matrix algebra of
order $2$ originated from the Morita context $(A, B, _AM_B, _BN_A,
\Phi_{MN}, \Psi_{NM})$ by
$$
\mathcal{G}=\left[
\begin{array}
[c]{cc}%
A & M\\
N & B
\end{array}
\right] ,
$$
where at least one of the two bimodules $M$ and $N$ is distinct from
zero. We always assume that $M$ is faithful as a left $A$-module and
also as a right $B$-module, but no any constraint conditions on $N$.
The center of $\mathcal{G}$ is
$$
\mathcal{Z(G)}=\left\{ \left[
\begin{array}
[c]{cc}%
a & 0\\
0 & b
\end{array}
\right] \vline \hspace{3pt} am=mb, \hspace{3pt} na=bn,\ \forall\
m\in M, \hspace{3pt} \forall n\in N \right\}.
$$
Indeed, by \cite[Lemma 1]{Krylov} we know that the center
$\mathcal{Z(G)}$ consists of all diagonal matrices $
\left[\smallmatrix a & 0\\
0 & b
\endsmallmatrix \right]$,
where $a\in \mathcal{Z}(A)$, $b\in \mathcal{Z}(B)$ and $am=mb$,
$na=bn$ for all $m\in M, n\in N$. However, in our situation which
$M$ is faithful as a left $A$-module and also as a right $B$-module,
the conditions that $a\in \mathcal{Z}(A)$ and $b\in \mathcal{Z}(B)$
become redundant and can be deleted. Indeed, if $am=mb$ for all
$m\in M$, then for any $a^\prime \in A$ we get
$$
(aa^\prime-a^\prime a)m=a(a'm)-a'(am)=(a'm)b-a'(mb)=0.
$$
The assumption that $M$ is faithful as a left $A$-module leads to
$aa'-a'a=0$ and hence $a\in \mathcal{Z}(A)$. Likewise, we also have
$b\in \mathcal{Z}(B)$.

Let us define two natural $\mathcal{R}$-linear projections
$\pi_A:\mathcal{G}\rightarrow A$ and $\pi_B:\mathcal{G}\rightarrow
B$ by
$$
\pi_A: \left[
\begin{array}
[c]{cc}%
a & m\\
n & b\\
\end{array}
\right] \longmapsto a \quad \text{and} \quad \pi_B: \left[
\begin{array}
[c]{cc}%
a & m\\
n & b\\
\end{array}
\right] \longmapsto b.
$$
By the above paragraph, it is not difficult to see that $\pi_A
\left(\mathcal{Z(G)}\right)$ is a subalgebra of $\mathcal{Z}(A)$ and
that $\pi_B\left(\mathcal{Z(G)}\right)$ is a subalgebra of
$\mathcal{Z}(B)$.
Given an element $a\in\pi_A(\mathcal{Z(G)})$, if $\left[\smallmatrix a & 0\\
0 & b
\endsmallmatrix \right], \left[\smallmatrix a & 0\\
0 & b^\prime
\endsmallmatrix \right] \in \mathcal{Z(G)}$, then we have $am=mb=mb'$ for
all $m\in M$. Since $M$ is faithful as a right $B$-module,
$b=b^\prime$. That implies there exists a unique
$b\in\pi_B(\mathcal{Z(G)})$, which is denoted by $\varphi(a)$, such
that $
\left[\smallmatrix a & 0\\
0 & b
\endsmallmatrix \right] \in \mathcal{Z(G)}$. It is easy to
verify that the map $\varphi:\pi_A(\mathcal{Z(G)})\longrightarrow
\pi_B(\mathcal{Z(G)})$ is an algebraic isomorphism such that
$am=m\varphi(a)$ and $na=\varphi(a)n$ for all $a\in
\pi_A(\mathcal{Z(G)}), m\in M, n\in N$.

Let $\mathcal{A}$ and $\mathcal{B}$ be algebras. Recall an
$(\mathcal{A}, \mathcal{B})$-bimodule is \textit{loyal} if
$a\mathcal{M}b=0$ implies that $a=0$ or $b=0$ for all $a\in
\mathcal{A}, b\in \mathcal{B}$. Let us first state several lemmas
without proofs, since their proofs are identical with those of
\cite[Lemma 2.4, Lemma 2.5, Lemma 2.6]{BenkovicEremita}.

\begin{lemma}\label{xxsec3.1}
Let $\mathcal{G}=\left[
\begin{array}
[c]{cc}%
A & M\\
N & B\\
\end{array}
\right]$ be a generalized matrix algebra with a loyal $(A,
B)$-bimodule $M$. For arbitrary element $\lambda\in
\pi_A(\mathcal{Z(G)})$ and arbitrary nonzero element $a\in A$, if
$\lambda a=0$, then $\lambda=0$
\end{lemma}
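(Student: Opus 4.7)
The plan is to exploit the two-sided structure encoded by the centre of $\mathcal{G}$. Since $\lambda\in\pi_A(\mathcal{Z(G)})$, the discussion preceding the lemma gives a unique $\varphi(\lambda)\in\pi_B(\mathcal{Z(G)})$ with
$\lambda m=m\varphi(\lambda)$ for every $m\in M$, and moreover $\lambda\in\mathcal{Z}(A)$. The strategy is to push the hypothesis $\lambda a=0$ through the bimodule $M$ so that loyalty of $M$ can be triggered, and then to close out with the faithfulness of $M$ as a left $A$-module.

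First I would compute, for an arbitrary $m\in M$,
\[
0=(\lambda a)m=\lambda(am)=(am)\varphi(\lambda)=a\bigl(m\varphi(\lambda)\bigr),
\]
where the second equality uses $\lambda\in\mathcal{Z}(A)$ (so it commutes with $a$ inside $A$, and $am\in M$) combined with $\lambda m=m\varphi(\lambda)$. This shows $a\,M\,\varphi(\lambda)=0$.

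Next, since $M$ is a loyal $(A,B)$-bimodule and $a\neq 0$ by hypothesis, the defining property of loyalty forces $\varphi(\lambda)=0$. Returning to the identity $\lambda m=m\varphi(\lambda)$, we obtain $\lambda m=0$ for every $m\in M$, i.e.\ $\lambda M=0$. The assumption that $M$ is faithful as a left $A$-module then yields $\lambda=0$, completing the argument.

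I do not anticipate a serious obstacle here: the only subtle point is to remember that $\pi_A(\mathcal{Z(G)})\subseteq\mathcal{Z}(A)$ so that $\lambda$ commutes with $a$, and then to reorganize the product $\lambda am$ so that $\varphi(\lambda)$ appears on the right of $M$ while $a$ appears on the left, precisely matching the shape $a M \varphi(\lambda)=0$ that loyalty is designed to handle.
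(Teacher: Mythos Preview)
Your argument is correct. Note, however, that the paper does not actually give a proof of this lemma: it states Lemmas \ref{xxsec3.1}--\ref{xxsec3.3} without proof, remarking that ``their proofs are identical with those of \cite[Lemma 2.4, Lemma 2.5, Lemma 2.6]{BenkovicEremita}.'' Your proof is precisely the standard one from that reference, adapted verbatim to the generalized matrix setting (the extra bimodule $N$ plays no role here), so there is nothing to compare beyond confirming that your write-up matches the intended argument.
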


\begin{lemma}\label{xxsec3.2}
Let $\mathcal{G}=\left[
\begin{array}
[c]{cc}%
A & M\\
N & B\\
\end{array}
\right]$ be a generalized matrix algebra with a loyal
$(A,B)$-bimodule $M$. Then the center $\mathcal{Z(G)}$ of
$\mathcal{G}$ is a domain.
\end{lemma}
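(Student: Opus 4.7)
The plan is to use the explicit description of $\mathcal{Z(G)}$ worked out in the preamble together with Lemma~\ref{xxsec3.1} to reduce the zero-divisor question on $\mathcal{Z(G)}$ to the corresponding question on $\pi_A(\mathcal{Z(G)}) \subseteq A$. Since the center of any associative algebra is commutative, the only thing requiring proof is that $\mathcal{Z(G)}$ has no nonzero zero divisors.

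First I would note that by the discussion preceding the lemma, every element of $\mathcal{Z(G)}$ has the canonical form $\left[\smallmatrix a & 0\\ 0 & \varphi(a) \endsmallmatrix \right]$ for a unique $a \in \pi_A(\mathcal{Z(G)})$, and that this element is zero if and only if $a = 0$ (since $\varphi$ is an algebra isomorphism, $\varphi(a)=0$ follows from $a=0$). Given $z_1, z_2 \in \mathcal{Z(G)}$ with $z_1 z_2 = 0$, I would write
$$
z_1 = \left[
\begin{array}{cc}
a_1 & 0 \\ 0 & \varphi(a_1)
\end{array}
\right], \qquad
z_2 = \left[
\begin{array}{cc}
a_2 & 0 \\ 0 & \varphi(a_2)
\end{array}
\right],
$$
and read off from matrix multiplication that $a_1 a_2 = 0$ in $A$.

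Next, since $a_1 \in \pi_A(\mathcal{Z(G)})$ and $a_2 \in A$, Lemma~\ref{xxsec3.1} applies: if $a_2 \neq 0$, then $a_1 a_2 = 0$ forces $a_1 = 0$, and hence $z_1 = 0$. Otherwise $a_2 = 0$, which gives $z_2 = 0$ directly. In either case one of the factors vanishes, proving that $\mathcal{Z(G)}$ has no zero divisors.

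There is no real obstacle here; the substantive content has already been packaged into Lemma~\ref{xxsec3.1} and into the canonical-form description of central elements that relies on loyalty (equivalently, faithfulness) of $M$. The proof is essentially a one-line reduction, and the only point to be slightly careful about is checking that the central element $z_1$ (as opposed to just an element of $A$) is genuinely zero once its $A$-component vanishes, which is immediate from the explicit form of $\mathcal{Z(G)}$.
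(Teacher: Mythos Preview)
Your argument is correct and is exactly the intended one: the paper omits the proof and points to \cite[Lemma~2.5]{BenkovicEremita}, whose argument (in the triangular setting) is precisely the reduction you carry out---write central elements as $\mathrm{diag}(a,\varphi(a))$, read off $a_1a_2=0$, and invoke Lemma~\ref{xxsec3.1}. There is nothing to add.
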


\begin{lemma}\label{xxsec3.3}
The generalized matrix algebra $\mathcal{G}=\left[
\begin{array}
[c]{cc}%
A & M\\
N & B\\
\end{array}
\right]$ has no nonzero central ideals.
\end{lemma}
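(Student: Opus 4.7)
The plan is to exploit the explicit description of $\mathcal{Z(G)}$ established in this section: every central element of $\mathcal{G}$ is diagonal. Thus if $I$ is any central ideal of $\mathcal{G}$, every element of $I$ has the form $\left[\smallmatrix a & 0 \\ 0 & b \endsmallmatrix\right]$, and we just need to force $a=0$ and $b=0$ using the ideal property combined with the standing faithfulness assumption on $M$.

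First I would pick an arbitrary element $G=\left[\smallmatrix a & 0 \\ 0 & b \endsmallmatrix\right]\in I$. For any $m\in M$, the product
$$
G\cdot \left[\begin{array}{cc} 0 & m \\ 0 & 0 \end{array}\right]=\left[\begin{array}{cc} 0 & am \\ 0 & 0 \end{array}\right]
$$
lies in $I$, hence in $\mathcal{Z(G)}$. Since every central element is diagonal, this forces $am=0$. Because $m\in M$ was arbitrary and $M$ is faithful as a left $A$-module, we conclude $a=0$. In an entirely symmetric way, multiplying $G$ on the left by $\left[\smallmatrix 0 & m \\ 0 & 0 \endsmallmatrix\right]$ gives $\left[\smallmatrix 0 & mb \\ 0 & 0 \endsmallmatrix\right]\in I\subseteq \mathcal{Z(G)}$, so $mb=0$ for all $m\in M$, and the right-faithfulness of $M$ as a $B$-module yields $b=0$.

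Therefore $G=0$, which shows $I=0$, as desired. There is no genuine obstacle here: the only mild subtlety is that Lemma \ref{xxsec3.3} as stated carries no explicit loyalty hypothesis, but the argument actually uses only the standing assumption, already in force throughout the section, that $M$ is faithful as a left $A$-module and as a right $B$-module (no condition on $N$ is needed). Both multiplications above produce elements whose only possibly nonzero entry sits in the $M$-slot, and membership in $\mathcal{Z(G)}$ immediately kills those entries, leaving faithfulness to finish the job.
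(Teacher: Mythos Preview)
Your argument is correct and is precisely the standard one. The paper itself omits the proof of Lemma~\ref{xxsec3.3}, stating that it is identical to that of \cite[Lemma~2.6]{BenkovicEremita}; your reasoning---using the diagonal form of central elements together with the standing faithfulness assumption on $M$---matches that approach.
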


We are ready to state and prove the main result of this section.

\begin{theorem}\label{xxsec3.4}
Let $\mathcal{G}=\left[
\begin{array}
[c]{cc}%
A & M\\
N & B\\
\end{array}
\right]$  be a $2$-torsionfree generalized matrix algebra over a
commutative ring $\mathcal{R}$ and ${\mathfrak q}\colon
\mathcal{G}\times \mathcal{G}\longrightarrow \mathcal{G}$ be an
$\mathcal{R}$-bilinear mapping. If
\begin{enumerate}
\item[{\rm(1)}] every commuting linear mapping on $A$ or $B$ is proper;
\item[{\rm(2)}] $\pi_A(\mathcal{Z(G)})=\mathcal{Z}(A)\neq A$ and
$\pi_B(\mathcal{Z(G)})=\mathcal{Z}(B)\neq
B$;
\item[{\rm(3)}] $M$ is loyal,
\end{enumerate}
then every commuting trace ${\mathfrak T}_{\mathfrak q}:
\mathcal{G}\longrightarrow \mathcal{G}$ of ${\mathfrak q}$ is
proper.
\end{theorem}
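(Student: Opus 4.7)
The plan is to follow the Benkovi\v{c}--Eremita strategy from \cite{BenkovicEremita} (developed there for triangular algebras, i.e., the case $N=0$), adapted to the present setting where both off-diagonal bimodules $M$ and $N$ are present. Using $2$-torsionfreeness, I would first symmetrize $\mathfrak{q}$ and assume $\mathfrak{q}(G,H) = \mathfrak{q}(H,G)$. Let $e_1,e_2\in\mathcal{G}$ be the standard Peirce idempotents corresponding to the summands $A$ and $B$, and write each $G\in\mathcal{G}$ via its Peirce decomposition $G = a_1 + m_{12} + n_{21} + b_2$ with $a_1\in e_1\mathcal{G}e_1$, $m_{12}\in e_1\mathcal{G}e_2$, $n_{21}\in e_2\mathcal{G}e_1$, $b_2\in e_2\mathcal{G}e_2$. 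Expanding $\mathfrak{T}_{\mathfrak{q}}(G)$ by bilinearity yields ten symmetric summands $\mathfrak{q}(X,Y)$ with $X,Y\in\{a_1,m_{12},n_{21},b_2\}$; decomposing each output into its four Peirce components produces a catalogue of forty atomic bilinear pieces to track.

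The main workhorse is the polarized commuting identity, obtained by replacing $G$ with $G+tH$ in $[\mathfrak{T}_{\mathfrak{q}}(G),G]=0$ and reading off the coefficient of $t$, namely
$$[\mathfrak{p}(G,H),G] + [\mathfrak{T}_{\mathfrak{q}}(G),H] = 0, \qquad \mathfrak{p}(G,H) := \mathfrak{q}(G,H)+\mathfrak{q}(H,G).$$
I would then run through a systematic sequence of specializations: first $G=a_1$ with $H$ ranging over each Peirce corner in turn (and likewise $G=b_2$), next the diagonal case $G=a_1+b_2$, then the mixed cases $G=a_1+m_{12}$, $G=a_1+n_{21}$, $G=m_{12}+b_2$, $G=n_{21}+b_2$, $G=m_{12}+n_{21}$, and so on. After matching Peirce components, each specialization reduces either to a commuting linear map on $A$ or on $B$ (to which hypothesis (1) applies, forcing proper form), or to a product-style identity to which Lemmas \ref{xxsec3.1}--\ref{xxsec3.3} and the loyalty of $M$ apply. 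The assumption $\pi_A(\mathcal{Z(G)})=\mathcal{Z}(A)\neq A$ (and its $B$-analogue) enters in two ways: the strict inequality provides enough non-central elements to test relations, while the equality together with the isomorphism $\varphi$ from the section preamble allows central scalars extracted on the $A$-side to be promoted to honest elements of $\mathcal{Z(G)}$.

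Once all the specializations are accounted for, the atomic pieces should reassemble into the desired proper form $\mathfrak{T}_{\mathfrak{q}}(G) = zG^2 + \mu(G)G + \nu(G)$ with $z\in\mathcal{Z(G)}$, $\mu$ an $\mathcal{R}$-linear map into $\mathcal{Z(G)}$, and $\nu$ a trace of some bilinear map into $\mathcal{Z(G)}$. The principal obstacle I anticipate is the analysis of the bilinear piece $\mathfrak{q}(m_{12},n_{21})$: unlike in the triangular case, where $N=0$ and this term is absent, here it can a priori have nonzero image in all four Peirce corners and couples the $A$-side analysis with the $B$-side analysis. Showing that its off-diagonal outputs vanish and that its diagonal outputs conspire to yield a genuinely central contribution will require the full strength of loyalty of $M$ together with the domain property of $\mathcal{Z(G)}$ from Lemma \ref{xxsec3.2}; only then do the various Peirce-split identities close up into a coherent global proper form.
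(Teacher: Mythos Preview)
Your proposal is correct and follows essentially the same route as the paper: Peirce-decompose $\mathfrak{T}_{\mathfrak q}$ into components $f_{ij},g_{ij},h_{ij},k_{ij}$, feed special elements into the commuting identity (the paper works directly with $[\mathfrak{T}_{\mathfrak q}(x),x]=0$ and sets various Peirce components of $x$ to zero rather than using your polarized version, but the resulting system of equations is the same), invoke hypothesis~(1) and loyalty of $M$ together with Lemmas~\ref{xxsec3.1}--\ref{xxsec3.3} and the isomorphism $\varphi$ to pin down each piece, and then assemble $z,\mu,\nu$. One small correction to your anticipated obstacle: the off-diagonal Peirce outputs $g_{23},h_{23}$ of $\mathfrak q(m_{12},n_{21})$ do \emph{not} vanish---they are absorbed into the $zG^2+\mu(G)G$ part of the proper form; what remains to be shown central is only the residual diagonal pair $\bigl(f_{23}(a_2,a_3)-\varepsilon\,a_2a_3,\ k_{23}(a_2,a_3)-\varepsilon'\,a_3a_2\bigr)$, and that final step is exactly where loyalty of $M$ and the map $\varphi$ close the argument.
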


For convenience, let us write $A_1=A$, $A_2=M$, $A_3=N$ and $A_4=B$.
Suppose that ${\mathfrak T}_{\mathfrak q}$ is an arbitrary trace of
the $\mathcal{R}$-bilinear mapping ${\mathfrak q}$. Then there exist
$\mathcal{R}$-bilinear mappings $f_{ij}: A_i\times A_j\rightarrow
A_1$, $g_{ij}: A_i\times A_j\rightarrow A_2$, $h_{ij}: A_i\times
A_j\rightarrow A_3$ and $k_{ij}: A_i\times A_j\rightarrow A_4$,
$1\leqslant i\leqslant j\leqslant 4$, such that
$$\begin{aligned}
{\mathfrak T}_{\mathfrak q}: \mathcal{G}& \longrightarrow \mathcal{G}\\
\left[
\begin{array}
[c]{cc}%
a_1 & a_2\\
a_3 & a_4\\
\end{array}
\right]& \longmapsto \left[
\begin{array}
[c]{cc}%
F(a_1,a_2,a_3,a_4) & G(a_1,a_2,a_3,a_4)\\
H(a_1,a_2,a_3,a_4) & K(a_1,a_2,a_3,a_4)\\
\end{array}
\right], \forall \left[\begin{array}
[c]{cc}%
a_1 & a_2\\
a_3 & a_4\\
\end{array}\right]\in \mathcal{G}
\end{aligned}
$$
where
$$
F(a_1,a_2,a_3,a_4)=\sum_{1\leqslant i\leqslant j\leqslant
4}f_{ij}(a_i,a_j),
$$
$$ G(a_1,a_2,a_3,a_4)=\sum_{1\leqslant i\leqslant
j\leqslant 4}g_{ij}(a_i,a_j),
$$
$$
H(a_1,a_2,a_3,a_4)=\sum_{1\leqslant i\leqslant j\leqslant
4}h_{ij}(a_i,a_j),
$$
$$ K(a_1,a_2,a_3,a_4)=\sum_{1\leqslant i\leqslant
j\leqslant 4}k_{ij}(a_i,a_j).
$$
Since ${\mathfrak T}_{\mathfrak q}$ is commuting, we have
$$\begin{aligned}
0&=\left[\left[
\begin{array}
[c]{cc}%
F & G\\
H & K\\
\end{array}
\right],\left[
\begin{array}
[c]{cc}%
a_1 & a_2\\
a_3 & a_4\\
\end{array}
\right]\right]\\
&=\left[
\begin{array}
[c]{cc}%
Fa_1+Ga_3-a_1F-a_2H & Fa_2+Ga_4-a_1G-a_2K\\
Ha_1+Ka_3-a_3F-a_4H & Ha_2+Ka_4-a_3G-a_4K\\
\end{array}
\right]
\end{aligned} \eqno(\bigstar)
$$
for all $\left[
\begin{array}
[c]{cc}%
a_1 & a_2\\
a_3 & a_4\\
\end{array}
\right]\in \mathcal{G}$.

Now we divide the proof of Theorem \ref{xxsec3.4} into a series of
lemmas for comfortable reading.

\begin{lemma}\label{xxsec3.5}
$H(a_1,a_2,a_3,a_4)=h_{13}(a_1,a_3)+h_{23}(a_2,a_3)+h_{33}(a_3,a_3)+h_{34}(a_3,a_4)$.
\end{lemma}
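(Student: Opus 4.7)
The lemma amounts to showing that $H_0(a_1,a_2,a_4):=H(a_1,a_2,0,a_4)$ vanishes identically. My first step is to set $a_3=0$ in the lower-left entry of $(\bigstar)$, which collapses the identity to
$$H_0(a_1,a_2,a_4)\cdot a_1 \;=\; a_4\cdot H_0(a_1,a_2,a_4),$$
and all subsequent work extracts the vanishing of the six surviving terms $h_{ij}$ with $(i,j)\in\{(1,1),(1,2),(1,4),(2,2),(2,4),(4,4)\}$ from this single $N$-valued relation in three free variables.

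The plan is to cascade unit substitutions linked by a short Vandermonde/homogeneity argument that invokes only 2-torsion-freeness. Substituting $a_1=0,\ a_4=1_B$ and dually $a_4=0,\ a_1=1_A$, then further taking $a_2=0$, pins down $h_{11}(1_A,1_A)=h_{44}(1_B,1_B)=0$; what remains is the linking relation $h_{22}(a_2,a_2)=-h_{12}(1_A,a_2)=-h_{24}(a_2,1_B)$, and similarly the master identity with $a_2=0$ ties $h_{11}(a_1,a_1)$, $h_{14}(a_1,a_4)$, and $h_{44}(a_4,a_4)$ to one another. In every such linking relation, a quadratic trace in one argument is being identified with a linear quantity in that same argument; scaling the argument by $t\in\mathcal{R}$ creates the mismatch $t^2=t$, and comparing $t=1$ with $t=2$ produces $2\cdot(\text{quadratic trace})=0$. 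The 2-torsion-freeness hypothesis then forces the quadratic trace, together with its associated coupling term, to vanish.

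Carried out in this order, the argument eliminates $h_{22}(a_2,a_2)$ and with it $h_{12}$ and $h_{24}$; then $h_{44}(a_4,a_4)$ and $h_{14}(1_A,a_4)$; and then $h_{11}(a_1,a_1)$ and $h_{14}(a_1,1_B)$. What survives is the reduced identity $h_{14}(a_1,a_4)\,a_1 = a_4\,h_{14}(a_1,a_4)$ in generic arguments. Polarizing in $a_1\mapsto a_1+a_1'$ and subtracting the two diagonal instances yields the cross-term identity $h_{14}(a_1,a_4)\,a_1' + h_{14}(a_1',a_4)\,a_1 = 0$; specializing $a_1'=1_A$ and using the already-proved $h_{14}(1_A,a_4)=0$ finally gives $h_{14}(a_1,a_4)=0$. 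I expect the main obstacle to be the careful orchestration of substitutions so that each Vandermonde step genuinely has a quadratic-versus-linear mismatch available to exploit. None of hypotheses (1)--(3) of Theorem~\ref{xxsec3.4} nor Lemmas~\ref{xxsec3.1}--\ref{xxsec3.3} are needed here; only 2-torsion-freeness of $\mathcal{R}$ and the existence of the unit elements $1_A$ and $1_B$ play a role.
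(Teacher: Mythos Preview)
Your approach is close in spirit to the paper's: both set $a_3=0$ in the lower-left entry of $(\bigstar)$ to obtain $H_0\,a_1=a_4\,H_0$, and then eliminate the six unwanted $h_{ij}$ by targeted substitutions together with 2-torsion-freeness. Your Vandermonde step for $h_{22}$, via the linking relation $h_{22}(a_2,a_2)=-h_{12}(1_A,a_2)$ obtained at $a_4=0,\,a_1=1_A$, is valid and is essentially a variant of the paper's sign-flip $a_2\mapsto -a_2$.

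There is, however, a genuine gap in the extension to the remaining terms. Your assertion that ``in every such linking relation, a quadratic trace in one argument is being identified with a linear quantity in that same argument'' fails for the $a_2=0$ specialization: there the identity reads
\[
\bigl(h_{11}(a_1,a_1)+h_{14}(a_1,a_4)+h_{44}(a_4,a_4)\bigr)a_1
= a_4\bigl(h_{11}(a_1,a_1)+h_{14}(a_1,a_4)+h_{44}(a_4,a_4)\bigr),
\]
which is a module-action relation, not an equality of a quadratic with a linear expression. Scaling $a_1$ (or $a_4$) by $t$ here produces a \emph{cubic} polynomial identity in $t$, and separating its coefficients using only $t=1,2$ and 2-torsion-freeness does not suffice. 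Moreover, the Vandermonde step for $h_{22}$ kills only $h_{12}(1_A,a_2)$ and $h_{24}(a_2,1_B)$; you still need a separate argument (e.g., the polarization plus unit-specialization that you \emph{do} supply for $h_{14}$) to conclude $h_{12}(a_1,a_2)=h_{24}(a_2,a_4)=0$ for general $a_1,a_4$, yet your sketch passes directly to ``the reduced identity $h_{14}(a_1,a_4)a_1=a_4h_{14}(a_1,a_4)$''. The paper avoids these issues by using the substitutions $a_i\mapsto -a_i$ and $a_i\mapsto a_i\pm 1$ uniformly: for instance, from $h_{11}(a_1,a_1)a_1=0$ it replaces $a_1$ by $a_1\pm 1$ and combines the two resulting equations to force $h_{11}=0$ using only 2-torsion-freeness. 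Your outline can be repaired along the same lines, but as written the orchestration you flag as ``the main obstacle'' is not actually carried out.
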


\begin{proof}
It follows from the matrix relation $(\bigstar)$ that
$$
Ha_1+Ka_3-a_3F-a_4H=0. \eqno(3.1)
$$
Let us choose $a_2=0$, $a_3=0$ and $a_4=0$. Then $(3.1)$ implies
that $h_{11}(a_1,a_1)a_1=0$ for all $a_1\in A_1$. Obviously,
$h_{11}(1,1)=0$. Replacing $a_1$ by $a_1+1$ and $1-a_1$ in
$h_{11}(a_1,a_1)a_1=0$ in turn, we obtain
$$
(h_{11}(a_1,a_1)+h_{11}(a_1,1)+h_{11}(1,a_1))(a_1+1)=0
$$
and
$$
(h_{11}(a_1,a_1)-h_{11}(a_1,1)-h_{11}(1,a_1))(1-a_1)=0
$$
for all $a_1\in A_1$. Combining the above two equations yields that
$2(h_{11}(a_1,1)+h_{11}(1,a_1))=0$. Since $\mathcal{G}$ is
$2$-torsion free, $h_{11}(a_1,a_1)=0$ for all $a_1\in A_1$.

Let us take $a_3=0$ and $a_4=0$ in $(3.1)$. Then we get
$$
(h_{12}(a_1,a_2)+h_{22}(a_2,a_2))a_1=0 \eqno(3.2)
$$
for all $a_1\in A_1, a_2\in A_2$. Substituting $-a_2$ for $a_2$ in
$(3.2)$ gives
$$
(-h_{12}(a_1,a_2)+h_{22}(a_2,a_2))a_1=0 \eqno(3.3)
$$
for all $a_1\in A_1, a_2\in A_2$. By $(3.2)$ and $(3.3)$ we know
that $2h_{22}(a_2,a_2)a_1=0$ for all $a_1\in A_1, a_2\in A_2$. Hence
$h_{22}(a_2,a_2)=0$ for all $a_2\in A_2$.

Now the relation $(3.2)$ shows that $h_{12}(a_1,a_2)a_1=0$ for all
$a_1\in A_1, a_2\in A_2$. Thus $h_{12}(1,a_2)=0$. Replacing $a_1$ by
$a_1+1$ in $h_{12}(a_1,a_2)a_1=0$ leads to
$0=(h_{12}(a_1,a_2)+h_{12}(1,a_2))(a_1+1)=h_{12}(a_1,a_2)$.

Let us choose $a_1=0$, $a_2=0$ and $a_3=0$. Applying $(3.1)$ yields
that $a_4h_{44}(a_4,a_4)$ $=0$ for all $a_4\in A_4$. Therefore
$h_{44}(1,1)=0$. Substituting $a_4+1$ and $1-a_4$ for $a_4$ in
$a_4h_{44}(a_4,a_4)=0$ in turn, we arrive at
$$
(a_4+1)(h_{44}(a_4,a_4)+h_{44}(a_4,1)+h_{44}(1,a_4))=0
$$
and
$$
(1-a_4)(h_{44}(a_4,a_4)-h_{44}(a_4,1)-h_{44}(1,a_4))=0
$$
for all $a_4\in A_4$. Combining the above two equations gives
$2(h_{44}(a_4,1)+h_{44}(1,a_4))=0$. Since $\mathcal{G}$ is
$2$-torsion free, $h_{44}(a_4,a_4)=0$ for all $a_4\in A_4$.

If we take $a_1=0$ and $a_3=0$ into $(3.1)$, then
$$
a_4(h_{22}(a_2,a_2)+h_{24}(a_2,a_4))=0 \eqno(3.4)
$$
for all $a_2\in A_2, a_4\in A_4$. Note that the fact
$h_{22}(a_2,a_2)=0$ for all $a_2\in A_2$. Hence $(3.4)$ implies that
$a_4h_{24}(a_2,a_4)=0$ for all $a_2\in A_2, a_4\in A_4$. So
$h_{24}(a_2,1)=0$. Replacing $a_4$ by $a_4+1$ in
$a_4h_{24}(a_2,a_4)=0$, we obtain
$0=(a_4+1)(h_{24}(a_2,a_4)+h_{24}(a_2,1))=h_{24}(a_2,a_4)$.

Finally let us choose $a_3=0$. Then $(3.1)$ becomes
$$
h_{14}(a_1,a_4)a_1-a_4h_{14}(a_1,a_4)=0 \eqno(3.5)
$$
for all $a_1\in A_1, a_4\in A_4$. Replacing $a_1$ by $-a_1$ in
$(3.5)$ we have
$$
h_{14}(a_1,a_4)a_1+a_4h_{14}(a_1,a_4)=0 \eqno(3.6)
$$
for all $a_1\in A_1, a_2\in A_2$. Combining $(3.5)$ with $(3.6)$
yields $h_{14}(a_1,a_4)a_1=0$ for all $a_1\in A_1, a_4\in A_4$.
Clearly, $h_{14}(1, a_4)=0$ for all $a_4\in A_4$. Substituting
$a_1+1$ for $a_1$ in $h_{14}(a_1,a_4)a_1=0$, we get
$0=(h_{14}(a_1,a_4)+h_{14}(1,a_4))(a_1+1)=h_{14}(a_1,a_4)$ and this
completes the proof of the lemma.
\end{proof}

Similarly, we can show

\begin{lemma}\label{xxsec3.6}
$G(a_1,a_2,a_3,a_4)=g_{12}(a_1,a_2)+g_{22}(a_2,a_2)+g_{23}(a_2,a_3)+g_{24}(a_2,a_4)$.
\end{lemma}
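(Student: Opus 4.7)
The plan is to parallel the proof of Lemma \ref{xxsec3.5}, but now exploiting the $(1,2)$-entry of the commutator identity $(\bigstar)$, namely
\begin{equation*}
Fa_2+Ga_4-a_1G-a_2K=0
\end{equation*}
for every $\left[\smallmatrix a_1 & a_2 \\ a_3 & a_4 \endsmallmatrix\right]\in \mathcal{G}$. The target is to show that the six ``$a_2$-free'' summands $g_{11},g_{13},g_{14},g_{33},g_{34},g_{44}$ of $G$ all vanish, so that only the four terms involving $a_2$ survive in the statement.

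I would proceed by successively specializing three of the four variables to zero. First, $a_2=a_3=a_4=0$ collapses the identity to $a_1 g_{11}(a_1,a_1)=0$; the substitutions $a_1\mapsto a_1\pm 1$, combined with $2$-torsion freeness (precisely the argument that dispatched $h_{11}$ in Lemma \ref{xxsec3.5}), force $g_{11}\equiv 0$. Next, with $a_2=a_4=0$ we obtain $a_1\bigl(g_{13}(a_1,a_3)+g_{33}(a_3,a_3)\bigr)=0$; replacing $a_3$ by $-a_3$ separates the two summands and yields $g_{33}=0$, then $g_{13}=0$. Taking $a_2=a_3=0$ produces
\begin{equation*}
\bigl(g_{14}(a_1,a_4)+g_{44}(a_4,a_4)\bigr)a_4 \;=\; a_1\bigl(g_{14}(a_1,a_4)+g_{44}(a_4,a_4)\bigr),
\end{equation*}
from which $g_{44}$ and then $g_{14}$ vanish by the same sign-substitution trick. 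Finally, $a_1=a_2=0$ reduces the identity to $g_{34}(a_3,a_4)\,a_4=0$, which forces $g_{34}\equiv 0$.

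There is no genuine obstacle here: the whole argument is a faithful row-versus-column mirror of Lemma \ref{xxsec3.5}, with $H$ (taking values in $A_3=N$) replaced by $G$ (taking values in $A_2=M$). The only technical care needed is that $G$ lands in a bimodule rather than in an associative algebra, so when peeling off individual summands one must multiply on the correct side and use the unital left action of $A$ (respectively right action of $B$) on $M$, substituting $a_1=1$ or $a_4=1$ at the appropriate linearization step, rather than relying on multiplication inside $M$.
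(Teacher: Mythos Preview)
Your proposal is correct and follows exactly the route the paper intends: Lemma~\ref{xxsec3.6} is stated without proof precisely because it is the $(1,2)$-entry mirror of Lemma~\ref{xxsec3.5}, obtained from the relation $Fa_2+Ga_4-a_1G-a_2K=0$ by the same pattern of specializations. One small sharpening: in your third step ($a_2=a_3=0$), the sign substitution alone only yields the coupled relations $g_{44}(a_4,a_4)a_4=a_1g_{14}(a_1,a_4)$ and $g_{14}(a_1,a_4)a_4=a_1g_{44}(a_4,a_4)$; you should first put $a_1=0$ in the displayed identity to obtain $g_{44}(a_4,a_4)a_4=0$ directly, then run the $a_4\mapsto a_4\pm1$ argument, and only afterwards return to kill $g_{14}$.
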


\begin{lemma}\label{xxsec3.7}
With notations as above, we have
\begin{enumerate}
\item[{\rm(1)}] $a_1\mapsto f_{11}(a_1,a_1)$ is a commuting trace;

\item[{\rm(2)}] $a_1\mapsto f_{12}(a_1,a_2)$, $a_1\mapsto
f_{13}(a_1,a_3)$, $a_1\mapsto f_{14}(a_1,a_4)$ are commuting linear
mappings for each $a_2\in A_2, a_3\in A_3, a_4\in A_4$,
respectively;

\item[{\rm(3)}] $f_{22},f_{24},f_{33},f_{34},f_{44}$
map into $\mathcal{Z}(A_1)$.
\end{enumerate}
\end{lemma}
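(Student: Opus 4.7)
The approach is to specialize the $(1,1)$-entry of the commutator identity $(\bigstar)$, namely
$$Fa_1 + Ga_3 - a_1 F - a_2 H = 0,$$
by setting various subsets of $\{a_2, a_3, a_4\}$ equal to zero. Since Lemma \ref{xxsec3.5} shows that every term of $H$ involves $a_3$, and Lemma \ref{xxsec3.6} shows that every term of $G$ involves $a_2$, the specialization $a_3=0$ forces both $H$ and $Ga_3$ to vanish, while $a_2=0$ forces both $G$ and $a_2H$ to vanish. In either case the identity collapses to the clean commutator equation $[F,a_1]=0$, from which the three claims follow by polarization together with the $2$-torsion freeness of $\mathcal{G}$.

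Concretely, I would carry out the following specializations. First, $a_2=a_3=a_4=0$ leaves only $f_{11}(a_1,a_1)$ in $F$ and yields $[f_{11}(a_1,a_1),a_1]=0$, establishing (1). Next, $a_3=a_4=0$ produces
$$[f_{11}(a_1,a_1),a_1] + [f_{12}(a_1,a_2),a_1] + [f_{22}(a_2,a_2),a_1] = 0;$$
replacing $a_2$ by $-a_2$ and adding eliminates the odd-degree term $f_{12}$ and gives $2[f_{22}(a_2,a_2),a_1]=0$, so $f_{22}(a_2,a_2)\in\mathcal{Z}(A)$; subtracting then shows that $a_1\mapsto f_{12}(a_1,a_2)$ is commuting for each fixed $a_2$. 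The symmetric choices $a_2=a_4=0$ and $a_2=a_3=0$, together with the analogous polarizations in $a_3$ and in $a_4$ (noting that $f_{33}(-a_3,-a_3)=f_{33}(a_3,a_3)$ and $f_{44}(-a_4,-a_4)=f_{44}(a_4,a_4)$), handle the pairs $f_{13},f_{33}$ and $f_{14},f_{44}$ respectively, completing (2) and part of (3).

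For the two remaining pieces $f_{24}$ and $f_{34}$, I would exploit the identifications already obtained. Setting $a_3=0$ alone still yields $[F,a_1]=0$, where now
$$F = f_{11}(a_1,a_1)+f_{12}(a_1,a_2)+f_{14}(a_1,a_4)+f_{22}(a_2,a_2)+f_{24}(a_2,a_4)+f_{44}(a_4,a_4);$$
subtracting the five commutators known to vanish isolates $[f_{24}(a_2,a_4),a_1]=0$ for all $a_1\in A$, hence $f_{24}(a_2,a_4)\in\mathcal{Z}(A)$. Specializing $a_2=0$ and carrying out the parallel cancellation yields $f_{34}(a_3,a_4)\in\mathcal{Z}(A)$ in the same manner. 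The argument is a systematic bookkeeping exercise; the only place real input is required is the polarization step that separates the trace $f_{ii}(a_i,a_i)$ from the mixed bilinear part $f_{1i}(a_1,a_i)$, which is precisely where the hypothesis that $\mathcal{G}$ is $2$-torsion free is used.
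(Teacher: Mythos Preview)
Your proposal is correct and follows essentially the same route as the paper: both arguments specialize the $(1,1)$-entry of $(\bigstar)$, invoke Lemmas \ref{xxsec3.5} and \ref{xxsec3.6} to collapse it to $[F,a_1]=0$, and then separate the pieces of $F$ by polarization. The only cosmetic difference is that the paper polarizes by sending $a_1\mapsto -a_1$ (so the separation is automatic, without a factor of $2$), whereas you polarize in $a_2,a_3,a_4$ and invoke $2$-torsion freeness explicitly; your ordering also isolates $f_{24}$ and $f_{34}$ by subtraction after all other terms are known, while the paper reaches them by one further specialization ($a_2=0$ or $a_4=0$) inside the intermediate identity. These are equivalent bookkeeping choices, not a different method.
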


\begin{proof}
It follows from the matrix relation $(\bigstar)$ that
$$
Fa_1+Ga_3-a_1F-a_2H=0. \eqno(3.7)
$$
Let us take $a_2=0$, $a_3=0$ and $a_4=0$ in $(3.7)$. Thus
$[f_{11}(a_1,a_1),a_1]=0$ for all $a_1\in A_1$.

Let us choose $a_3=0$ and $a_4=0$. Applying Lemma \ref{xxsec3.5} and
$(3.7)$ yields $[F,a_1]=0$, that is
$$
[f_{12}(a_1,a_2)+f_{22}(a_2,a_2),a_1]=0 \eqno(3.8)
$$
for all $a_1\in A_1, a_2\in A_2$. Replacing $a_1$ by $-a_1$ in
$(3.8)$ we obtain
$$
[f_{12}(a_1,a_2)-f_{22}(a_2,a_2),a_1]=0 \eqno(3.9)
$$
for all $a_1\in A_1, a_2\in A_2$. Combining $(3.8)$ with $(3.9)$ we
get $[f_{12}(a_1,a_2),a_1]=0$ and $[f_{22}(a_2,a_2),a_1]=0$ for all
$a_1\in A_1, a_2\in A_2$.

If we take $a_3=0$, then $(3.7)$ and Lemma \ref{xxsec3.5} imply that
$$
[f_{14}(a_1,a_4)+f_{24}(a_2,a_4)+f_{44}(a_4,a_4),a_1]=0 \eqno(3.10)
$$
for all $a_1\in A_1, a_2\in A_2, a_4\in A_4$. Substituting $-a_1$
for $a_1$ in $(3.10)$ we have
$$
[f_{14}(a_1,a_4)-f_{24}(a_2,a_4)-f_{44}(a_4,a_4),a_1]=0 \eqno(3.11)
$$
for all $a_1\in A_1, a_2\in A_2, a_4\in A_4$. In view of $(3.10)$
and $(3.11)$, we arrive at $[f_{14}(a_1,a_4), $ $a_1]=0$ and
$[f_{24}(a_2,a_4)+f_{44}(a_4,a_4),a_1]=0$. Taking $a_2=0$ into the
last equality we get $f_{44}(a_4,a_4)\in \mathcal{Z}(A_1)$ and hence
$f_{24}(a_2,a_4)\in \mathcal{Z}(A_1)$ for all $a_2\in A_2, a_4\in
A_4$.

Let us choose $a_2=0$. By $(3.7)$ and Lemma \ref{xxsec3.6} it
follows that
$$
[f_{13}(a_1,a_3)+f_{33}(a_3,a_3)+f_{34}(a_3,a_4),a_1]=0 \eqno(3.12)
$$
for all $a_1\in A_1, a_3\in A_3, a_4\in A_4$. Let us put $a_4=0$ in
$(3.12)$. Then
$$
[f_{13}(a_1,a_3)+f_{33}(a_3,a_3),a_1]=0 \eqno(3.13)
$$
for all $a_1\in A_1, a_3\in A_3$, which gives $f_{34}(a_3,a_4)\in
\mathcal{Z}(A_1)$. Replacing $a_1$ by $-a_1$ in $(3.13)$ yields
$$
[f_{13}(a_1,a_3)-f_{33}(a_3,a_3),a_1]=0 \eqno(3.14)
$$
for all $a_1\in A_1, a_3\in A_3$. Combining $(3.13)$ with $(3.14)$
we obtain $f_{33}(a_3,a_3)\in \mathcal{Z}(A_1)$ and
$[f_{13}(a_1,a_3),a_1]=0$ for all $a_1\in A_1, a_3\in A_3$.
\end{proof}

Using an analogous proof of Lemma \ref{xxsec3.7} the following
results hold.

\begin{lemma}\label{xxsec3.8}
With notations as above, we have
\begin{enumerate}
\item[{\rm(1)}] $a_4\mapsto k_{44}(a_4,a_4)$ is a commuting trace;

\item[{\rm(2)}] $a_4\mapsto k_{14}(a_1,a_4)$, $a_4\mapsto k_{24}(a_2,a_4)$,
$a_4\mapsto k_{34}(a_3,a_4)$ are commuting mappings for each $a_1\in
A_1, a_2\in A_2, a_3\in A_3$, respectively;

\item[{\rm(3)}] $k_{11},k_{12},k_{13},k_{22},k_{33}$ map into $\mathcal{Z}(A_4)$.
\end{enumerate}
\end{lemma}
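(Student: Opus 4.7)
The plan is to mirror the proof of Lemma~\ref{xxsec3.7} step for step, but now to exploit the $(2,2)$-entry of the matrix commutator $(\bigstar)$, namely
$$Ha_2+Ka_4-a_3G-a_4K=0, \eqno(\heartsuit)$$
with the roles of $A_1$ and $A_4$ interchanged. A convenient structural observation is that by Lemma~\ref{xxsec3.5} every summand of $H$ carries $a_3$ as a factor, and by Lemma~\ref{xxsec3.6} every summand of $G$ carries $a_2$ as a factor; in particular, setting $a_3=0$ forces $H=0$, and setting $a_2=0$ forces $G=0$.

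First I would put $a_1=a_2=a_3=0$ in $(\heartsuit)$: then $H=G=0$, $K$ collapses to $k_{44}(a_4,a_4)$, and $(\heartsuit)$ becomes $[k_{44}(a_4,a_4),a_4]=0$, proving~(1). Next I would successively kill two of the three non-$a_4$ variables and read off the remaining coefficients by the familiar $\pm$-trick together with the $2$-torsion freeness hypothesis. Taking $a_2=a_3=0$ with $a_1,a_4$ arbitrary leaves $H=G=0$ and yields $[k_{11}(a_1,a_1)+k_{14}(a_1,a_4)+k_{44}(a_4,a_4),a_4]=0$; substituting $-a_1$ for $a_1$ and then adding and subtracting the two resulting identities decouples $[k_{11}(a_1,a_1),a_4]=0$ from $[k_{14}(a_1,a_4),a_4]=0$. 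The symmetric choices $a_1=a_3=0$ and $a_1=a_2=0$ then deliver in exactly the same fashion $k_{22}(a_2,a_2),k_{33}(a_3,a_3)\in\mathcal{Z}(A_4)$ and the commuting property of $a_4\mapsto k_{24}(a_2,a_4)$ and $a_4\mapsto k_{34}(a_3,a_4)$.

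Finally, to pick up the two remaining central pieces, I would specialize only $a_3=0$ (respectively only $a_2=0$), leaving the other three variables free. In the first case the structural observation kills both $H$ and $a_3G$, so $(\heartsuit)$ reduces to $[K,a_4]=0$; since every summand of $K$ besides $k_{12}(a_1,a_2)$ has already been shown to commute with $a_4$, one concludes $k_{12}(a_1,a_2)\in\mathcal{Z}(A_4)$. The second case gives $k_{13}(a_1,a_3)\in\mathcal{Z}(A_4)$ symmetrically. The only real difficulty is bookkeeping: the specializations must be carried out in exactly this order, so that at each step only one new bilinear piece is left to identify; once the order is respected, no algebraic idea beyond the $\pm$-trick and the $2$-torsion freeness assumption already used in Lemma~\ref{xxsec3.7} is required.
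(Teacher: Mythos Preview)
Your proposal is correct and follows exactly the route the paper intends: the paper simply states that Lemma~\ref{xxsec3.8} holds ``using an analogous proof of Lemma~\ref{xxsec3.7}'', and what you have written is precisely that analogous argument, carried out on the $(2,2)$-entry of $(\bigstar)$ with the roles of $A_1$ and $A_4$ swapped. Your systematic use of Lemmas~\ref{xxsec3.5} and~\ref{xxsec3.6} to eliminate $H$ and $G$, followed by the $\pm$-substitution and $2$-torsion freeness, mirrors the structure of the proof of Lemma~\ref{xxsec3.7} and is, if anything, organized a bit more transparently than the original.
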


\begin{lemma}\label{xxsec3.9}
$\left[
\begin{array}
[c]{cc}%
f_{22}(a_2,a_2) & 0\\
0 & k_{22}(a_2,a_2)
\end{array}
\right] \in \mathcal{Z}(\mathcal{G})$ and $\left[
\begin{array}
[c]{cc}%
f_{33}(a_3,a_3) & 0\\
0 & k_{33}(a_3,a_3)
\end{array}
\right] \in \mathcal{Z}(\mathcal{G})$.
\end{lemma}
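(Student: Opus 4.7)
The two matrices in the statement lie in $\mathcal{Z}(\mathcal{G})$ precisely when the pairs of diagonal entries correspond under the canonical isomorphism $\varphi\colon \mathcal{Z}(A) \to \mathcal{Z}(B)$ furnished by hypothesis (2). Since $f_{22}(a_2,a_2) \in \mathcal{Z}(A)$ and $k_{22}(a_2,a_2) \in \mathcal{Z}(B)$ by Lemmas \ref{xxsec3.7}(3) and \ref{xxsec3.8}(3), the first claim reduces to showing that the \emph{defect} $\delta(a_2) := \varphi(f_{22}(a_2,a_2)) - k_{22}(a_2,a_2) \in \mathcal{Z}(B)$ vanishes; the second claim is dual. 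The plan is to extract two scalar identities from $(\bigstar)$, polarize them, and then transport the resulting annihilation from the module $M$ into the algebra $A$ via the Morita pairing $\Phi$, so that Lemma \ref{xxsec3.1} becomes applicable.

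First I would set $a_1 = a_3 = a_4 = 0$ in the $(1,2)$-entry of $(\bigstar)$; thanks to Lemmas \ref{xxsec3.5} and \ref{xxsec3.6} only two summands survive, giving $f_{22}(a_2,a_2)\,a_2 = a_2\,k_{22}(a_2,a_2)$, equivalently $a_2\,\delta(a_2) = 0$ in $M$. Next I would take $a_1 = a_4 = 0$ in the $(2,1)$-entry with $a_3 \in N$ arbitrary; after using the companion relation $k_{33}(a_3,a_3)\,a_3 = a_3\,f_{33}(a_3,a_3)$ (obtained from the $(2,1)$-entry at $a_1 = a_2 = a_4 = 0$) and the centrality of the quadratic diagonal traces, what remains is the mixed identity
\[
\delta(a_2)\,a_3 \;=\; k_{23}(a_2,a_3)\,a_3 \,-\, a_3\,f_{23}(a_2,a_3) \qquad (a_2 \in M,\ a_3 \in N).
\]

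I then polarize each identity in $a_2$ (substitute $a_2 + b_2$ and exploit $2$-torsion-freeness). Writing $\lambda, \lambda_0$ for the $\varphi^{-1}$-images of $\delta$ and of its symmetric bilinearization $\delta_0$, the $(1,2)$-identity yields $\lambda(a_2)\,b_2 + \lambda_0(a_2,b_2)\,a_2 = 0$ in $M$, while the mixed identity collapses---since the $k_{23}$- and $f_{23}$-terms cancel by bilinearity---to $\delta_0(a_2,b_2)\cdot a_3 = 0$ in $N$, equivalently $a_3\,\lambda_0(a_2,b_2) = 0$. Right-multiplying the first polarized identity by $n \in N$ through $\Phi$ produces $\lambda(a_2)(b_2 n) + \lambda_0(a_2,b_2)(a_2 n) = 0$ in $A$; the $\lambda_0$-term vanishes because $(a_2 n)\,\lambda_0 = \Phi(a_2,\, n\lambda_0) = 0$ by the right $A$-linearity of $\Phi$. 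Hence $\lambda(a_2)\cdot (b_2 n) = 0$ for every $b_2 \in M$ and $n \in N$. Loyalty of $M$ (together with the non-triviality built into hypothesis (2)) guarantees some $b_2 n \in A$ is nonzero, so Lemma \ref{xxsec3.1} forces $\lambda(a_2) = 0$, i.e.\ $\delta(a_2) = 0$. The matrix $\left[\begin{array}{cc} f_{33}(a_3,a_3) & 0\\ 0 & k_{33}(a_3,a_3)\end{array}\right]$ is handled dually: swap the roles of the $(1,2)$- and $(2,1)$-entries and replace $\Phi$ by the companion pairing $\Psi\colon N \otimes_A M \to B$.

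The main obstacle is this last bridging step. The relation $a_2 \delta(a_2) = 0$ does \emph{not} on its own force $\delta(a_2) = 0$, since no $M$-analogue of Lemma \ref{xxsec3.1} is available in general. The resolution is to combine the two polarizations---the one in $M$ coming from the $(1,2)$-entry with the one in $N$ coming from the $(2,1)$-entry---and to use the Morita pairing $\Phi$ to push the composite annihilation into $A$, where Lemma \ref{xxsec3.1} finally applies. Isolating the correct cancellation (the vanishing of the $\lambda_0$-term above) and verifying that loyalty of $M$ supplies a nonzero $\Phi(b_2, n)$ are the delicate points of the argument.
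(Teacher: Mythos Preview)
Your approach has a genuine gap at the Morita-pairing step. The very identity you use to kill the $\lambda_0$-term also kills the $\lambda$-term you want to keep. Indeed, from your mixed identity you correctly obtain $\delta_0(a_2,b_2)\,a_3=0$ for all $a_3\in N$; specializing $b_2=a_2$ and using $2$-torsion-freeness gives $\delta(a_2)\,a_3=0$ as well, i.e.\ $n\lambda(a_2)=\varphi(\lambda(a_2))n=\delta(a_2)n=0$. Hence
\[
\lambda(a_2)\,(b_2n)=(b_2n)\,\lambda(a_2)=\Phi(b_2,n)\lambda(a_2)=\Phi\bigl(b_2,\,n\lambda(a_2)\bigr)=0
\]
by exactly the same right $A$-linearity of $\Phi$ you invoke for $\lambda_0$. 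So after multiplying by $n$ your polarized identity collapses to $0=0$ and nothing is proved. (Your stated two-term polarization $\lambda(a_2)b_2+\lambda_0(a_2,b_2)a_2=0$ is also incorrect---the full linearization has the symmetric three-term form $\beta(x,y)z+\beta(z,x)y+\beta(y,z)x=0$---but this is secondary to the collapse just described.) There is a further structural problem: loyalty of $M$ is a condition on the $(A,B)$-action on $M$ and says nothing about the image of $\Phi\colon M\otimes_B N\to A$; in the triangular case $N=0$ one has $b_2n=0$ identically, yet Lemma~\ref{xxsec3.9} must still hold there. So routing the argument through $\Phi$ cannot succeed in the stated generality.

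The paper avoids $N$ entirely and works inside $M$ using the left $A$-action. From $(f_{22}(a_2,a_2)-\varphi^{-1}(k_{22}(a_2,a_2)))a_2=0$ one passes to the trilinear identity $\beta(x,y)z+\beta(z,x)y+\beta(y,z)x=0$ in $M$; replacing $z$ by $az$ with $a\in A$ and subtracting $a$ times the original identity leaves a two-term relation to which \cite[Lemma~2.3]{BenkovicEremita} (this is where loyalty of $M$ enters) applies, yielding $\beta(az,x)=\beta(z,x)a$ and hence $\beta(z,x)[a,b]=0$. Now the noncommutativity hypothesis $\mathcal{Z}(A)\neq A$ supplies $[a,b]\neq 0$, and Lemma~\ref{xxsec3.1} forces $\beta=0$. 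The $(f_{33},k_{33})$ case is \emph{not} dual to this but in fact easier: since $f_{33},k_{33}$ do not depend on $a_2$, the relation $f_{33}(a_3,a_3)a_2=a_2k_{33}(a_3,a_3)$ drops out directly (substitute $-a_2$ in the analogue of (3.16)) for \emph{all} $a_2\in M$, so faithfulness of $M$ finishes immediately.
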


\begin{proof}
By the relation $(\bigstar)$ we know that
$$
Fa_2+Ga_4-a_1G-a_2K=0. \eqno(3.15)
$$
Let us take $a_1=0$ and $a_4=0$. Then $(3.15)$ implies that
$$
(f_{22}(a_2,a_2)+f_{23}(a_2,a_3)+f_{33}(a_3,a_3))a_2=a_2(k_{22}(a_2,a_2)+k_{23}(a_2,a_3)+k_{33}(a_3,a_3))\eqno(3.16)
$$
for all $a_2\in A_2, a_3\in A_3$. Moreover, setting $a_3=0$ in
$(3.16)$ we get
$$f_{22}(a_2,a_2)a_2=a_2k_{22}(a_2,a_2) \eqno(3.17)
$$
for all $a_2\in A_2$. Applying Lemma \ref{xxsec3.7}, Lemma
\ref{xxsec3.8} and \cite[Lemma 3.2]{XiaoWei1} yields that
$(f_{22}(a_2,a_2)-\varphi^{-1}(k_{22}(a_2,a_2)))a_2=0$. By the
complete linearization we have
$$
\beta(x,y)z+\beta(z,x)y+\beta(y,z)x=0 \eqno(3.18)
$$
for all $x,y,z\in A_2$, where
$$
\beta(x,y)=f_{22}(x,y)-\varphi^{-1}(k_{22}(x,y))+f_{22}(y,x)-\varphi^{-1}(k_{22}(y,x)).
$$
Obviously, the mapping $\beta: A_2\times A_2\rightarrow
\mathcal{Z}(A_1)$ is bilinear and symmetric. By the hypothesis there
exist $a,b\in A_1$ such that $[a,b]\neq 0$. Replacing $z$ by $az$ in
$(3.18)$ and subtracting the left multiplication of $(3.18)$ by $a$,
we get
$$
(\beta(az,x)-\beta(z,x)a)y+(\beta(y,az)-\beta(y,z)a)x=0
$$
for all $x,y,z\in A_2$. It follows from \cite[Lemma
2.3]{BenkovicEremita} that $\beta(az,x)=\beta(z,x)a$ and hence
$\beta(z,x)[a,b]=0$ for all $x,z\in A_2$. Applying Lemma
\ref{xxsec3.1} yields $\beta=0$. In particular, $\beta(a_2,a_2)=0$
for all $a_2\in A_2$. Thus $\left[
\begin{array}
[c]{cc}%
f_{22}(a_2,a_2) & 0\\
0 & k_{22}(a_2,a_2)
\end{array}
\right]\in \mathcal{Z(G)}$.

Now the relation $(3.16)$ becomes
$$
(f_{23}(a_2,a_3)+f_{33}(a_3,a_3))a_2=a_2(k_{23}(a_2,a_3)+k_{33}(a_3,a_3))\eqno(3.19)
$$
for all $a_2\in A_2, a_3\in A_3$. Substituting $-a_2$ for $a_2$ and
applying $(3.19)$, we arrive at
$f_{33}(a_3,a_3)a_2=a_2k_{33}(a_3,a_3)$ for all $a_2\in A_2, a_3\in
A_3$. In view of the fact $M$ is faithful as a left $A$-module and
$k_{33}(a_3,a_3)\in \mathcal{Z}(B)=\pi_B(\mathcal{Z}(\mathcal{G}))$,
we assert that $\left[
\begin{array}
[c]{cc}%
f_{33}(a_3,a_3) & 0\\
0 & k_{33}(a_3,a_3)
\end{array}
\right]\in \mathcal{Z(G)}$.
\end{proof}

\begin{lemma}\label{xxsec3.10}
$f_{12}(a_1,a_2)=\alpha(a_2)a_1+\varphi^{-1}(k_{12}(a_1,a_2))$ and
$k_{24}(a_2,a_4)=\varphi(\alpha(a_2))a_4$ $+\varphi
(f_{24}(a_2,a_4))$, where
$\alpha(a_2)=f_{12}(1,a_2)-\varphi^{-1}(k_{12}(1,a_2))$.
\end{lemma}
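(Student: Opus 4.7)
The plan is to derive both identities from the $(1,2)$-entry of the matrix commutator relation $(\bigstar)$. For the first equality I specialize $a_3=a_4=0$; Lemma~\ref{xxsec3.5} kills $H$, Lemma~\ref{xxsec3.9} cancels $f_{22}(a_2,a_2)a_2=a_2k_{22}(a_2,a_2)$, and Lemma~\ref{xxsec3.8}(3) (together with hypothesis~(2), which gives $\mathcal{Z}(B)=\pi_B(\mathcal{Z(G)})$) lets me pass the $k_{11},k_{12}$ terms through $a_2$ via $\varphi^{-1}$. The remaining pieces quadratic in $a_1$ (the $f_{11}$ and $k_{11}$ terms) are killed by the $a_1\mapsto -a_1$ half-difference trick (using $2$-torsion-freeness), leaving
$$\bigl(f_{12}(a_1,a_2)-\varphi^{-1}(k_{12}(a_1,a_2))\bigr)\,a_2 \;=\; a_1\,g_{22}(a_2,a_2). \qquad (\ast)$$
Setting $a_1=1$ identifies $g_{22}(a_2,a_2)=\alpha(a_2)\,a_2$; substituting back and using the centrality of $\alpha(a_2)\in\mathcal{Z}(A)$ to move it past $a_1$ converts $(\ast)$ into
$$\bigl(f_{12}(a_1,a_2)-\varphi^{-1}(k_{12}(a_1,a_2))-\alpha(a_2)a_1\bigr)\,a_2 \;=\; 0. \qquad (\ast\ast)$$

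The heart of the argument is to upgrade $(\ast\ast)$ to the vanishing of the bracketed factor, which I denote $T(a_1,a_2)$. By Lemma~\ref{xxsec3.7}(2) and hypothesis~(1), $f_{12}(a_1,a_2)=\lambda(a_2)a_1+\eta(a_1,a_2)$ with $\lambda(a_2),\eta(a_1,a_2)\in\mathcal{Z}(A)$, so $T(a_1,a_2)=\bigl(\lambda(a_2)-\alpha(a_2)\bigr)a_1+\bigl(\eta(a_1,a_2)-\varphi^{-1}(k_{12}(a_1,a_2))\bigr)$, with both coefficients central, and $T(1,a_2)=0$ automatically. Linearizing $(\ast\ast)$ by $a_2\mapsto a_2+m$ and stripping the pure-$a_2$ and pure-$m$ pieces yields the bilinear identity
$$T(a_1,a_2)\,m + T(a_1,m)\,a_2 \;=\; 0 \quad (m\in M),$$
which is of exactly the shape exploited in the proof of Lemma~\ref{xxsec3.9}. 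Following that blueprint---complete linearization, the substitution $m\mapsto a\cdot m$ followed by subtracting the corresponding left-multiple, combined with the loyalty of $M$, its left $A$-faithfulness, Lemma~\ref{xxsec3.1}, and the existence of $c,d\in A$ with $[c,d]\neq 0$ supplied by hypothesis~(2)---one forces $\lambda(a_2)=\alpha(a_2)$ and then $\eta(a_1,a_2)=\varphi^{-1}(k_{12}(a_1,a_2))$, which is the first identity.

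The second identity is proved symmetrically, starting from the $(1,2)$-entry of $(\bigstar)$ with $a_1=a_3=0$. Lemma~\ref{xxsec3.9} again cancels the $f_{22},k_{22}$ pieces; Lemma~\ref{xxsec3.7}(3) together with hypothesis~(2) routes $f_{24}(a_2,a_4)$ and $f_{44}(a_4,a_4)$ across $a_2$ through $\varphi$, and the $a_4\mapsto -a_4$ trick removes the pieces quadratic in $a_4$. Substituting the already-established $g_{22}(a_2,a_2)=\alpha(a_2)a_2$, using $\alpha(a_2)a_2=a_2\varphi(\alpha(a_2))$, and factoring $a_2$ on the left yields
$$a_2\bigl(\varphi(f_{24}(a_2,a_4))+\varphi(\alpha(a_2))\,a_4-k_{24}(a_2,a_4)\bigr) \;=\; 0.$$
Lemma~\ref{xxsec3.8}(2) together with hypothesis~(1) supplies the proper form of the commuting linear map $k_{24}(a_2,\cdot)$ on $B$, and the mirror-image loyalty/faithfulness argument (now using right $B$-faithfulness of $M$ and $B\neq\mathcal{Z}(B)$ from hypothesis~(2)) delivers the second identity. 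The main technical obstacle in both halves is this final step: passing from $(\text{bracket})\cdot a_2=0$ (respectively $a_2\cdot(\text{bracket})=0$) to the pointwise vanishing of the bracket. Naive faithfulness does not suffice since $a_2$ occurs in both factors; one must genuinely combine the linearized bilinear identity with loyalty and with the non-commutativity of $A$ (resp.\ $B$) in the style of Lemma~\ref{xxsec3.9}.
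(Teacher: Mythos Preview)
Your overall route coincides with the paper's: work in the $(1,2)$-entry of $(\bigstar)$, specialize $a_3=a_4=0$ (resp.\ $a_1=a_3=0$), cancel the $f_{22}/k_{22}$ contribution via Lemma~\ref{xxsec3.9}, use the $a_1\mapsto -a_1$ (resp.\ $a_4\mapsto -a_4$) half-difference to strip the even part, and arrive at $E(a_1,a_2)a_2=0$ with $E=T$; then linearize in $a_2$. (Two cosmetic slips: Lemma~\ref{xxsec3.5} is irrelevant here since $H$ does not occur in the $(1,2)$-entry, and the centrality of $\alpha(a_2)$ comes from linearizing $[f_{12}(a_1,a_2),a_1]=0$ at $a_1=1$, which you use but do not state.)

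The one substantive divergence is in how you kill $T$. The paper, after obtaining
\[
E(a_1,a_2)\,b_2+E(a_1,b_2)\,a_2=0 \qquad (a_2,b_2\in M),
\]
invokes \cite[Lemma~2.3]{BenkovicEremita} directly; loyalty of $M$ alone (no appeal to hypothesis~(1), no noncommutativity of $A$) forces $E=0$, and ``similarly'' for $k_{24}$. Your version instead first invokes hypothesis~(1) to write $f_{12}(a_1,a_2)=\lambda(a_2)a_1+\eta(a_1,a_2)$ in proper form on $A$, and later invokes hypothesis~(1) again to put $k_{24}(a_2,\cdot)$ in proper form on $B$. That is more than the hypothesis gives you: condition~(1) in Theorem~\ref{xxsec3.4} only asserts properness of commuting linear maps on $A$ \emph{or} $B$, not on both, so you cannot use it simultaneously for the $f_{12}$ half and the $k_{24}$ half. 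Moreover, the ``Lemma~\ref{xxsec3.9} blueprint'' you sketch (substitute $m\mapsto a\cdot m$, subtract the left multiple) does not separate cleanly here, because the coefficient of $a_2$ still depends on $m$ after the subtraction. The clean fix is exactly what the paper does: once you have the symmetric bilinear identity $T(a_1,a_2)b_2+T(a_1,b_2)a_2=0$, cite \cite[Lemma~2.3]{BenkovicEremita} and be done---no proper form, no commutator trick needed.
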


\begin{proof}
Taking $a_4=0$ into $(3.15)$ and using $(3.16)$ we have
$$
\big(f_{11}(a_1,a_1)+f_{12}(a_1,a_2)+f_{13}(a_1,a_3)\big)a_2-a_2\big(k_{11}(a_1,a_1)+k_{12}(a_1,a_2)
+k_{13}(a_1,a_3)\big)$$
$$
-a_1\big(g_{12}(a_1,a_2)+g_{22}(a_2,a_2)+g_{23}(a_2,a_3)\big)=0.
\eqno(3.20)
$$
Replacing $a_1$ by $-a_1$ in $(3.20)$ we get
$$
\big(f_{11}(a_1,a_1)-f_{12}(a_1,a_2)-f_{13}(a_1,a_3)\big)a_2-a_2\big(k_{11}(a_1,a_1)-k_{12}(a_1,a_2)
-k_{13}(a_1,a_3)\big)$$
$$
-a_1\big(g_{12}(a_1,a_2)-g_{22}(a_2,a_2)-g_{23}(a_2,a_3)\big)=0.
\eqno(3.21)
$$
Combining $(3.20)$ with $(3.21)$ yields
$$
a_1g_{12}(a_1,a_2)=f_{11}(a_1,a_1)a_2-a_2k_{11}(a_1,a_1),
\eqno(3.22)
$$
$$
a_1g_{22}(a_2,a_2)=f_{12}(a_1,a_2)a_2-a_2k_{12}(a_1,a_2),
\eqno(3.23)
$$
$$
a_1g_{23}(a_2,a_3)=f_{13}(a_1,a_3)a_2-a_2k_{13}(a_1,a_3).
\eqno(3.24)
$$
In an analogous way, taking $a_1=0$ into $(3.15)$ and using $(3.16)$
we obtain
$$
g_{24}(a_2,a_4)a_4=a_2k_{44}(a_4,a_4)-f_{44}(a_4,a_4)a_2,
\eqno(3.25)
$$
$$
g_{22}(a_2,a_2)a_4=a_2k_{24}(a_2,a_4)-f_{24}(a_2,a_4)a_2,
\eqno(3.26)
$$
$$
g_{23}(a_2,a_3)a_4=a_2k_{34}(a_3,a_4)-f_{34}(a_3,a_4)a_2.
\eqno(3.27)
$$
On the other hand, we have showed that $[f_{12}(a_1,a_2),a_1]=0$ for
all $a_1\in A_1, a_2\in A_2$. Substituting $a_1+1$ for $a_1$ leads
to $f_{12}(1,a_2)\in \mathcal{Z}(A_1)$ for all $a_2\in A_2$. By the
relation $(3.23)$ we know that
$$
g_{22}(a_2,a_2)=\alpha(a_2)a_2, \eqno(3.28)
$$
where $\alpha(a_2)=f_{12}(1,a_2)-\varphi^{-1}(k_{12}(1,a_2))\in
\mathcal{Z}(A_1)$. Let us set
$E(a_1,a_2)=f_{12}(a_1,a_2)-\alpha(a_2)a_1-\varphi^{-1}(k_{12}(a_1,a_2))$.
Then $(3.23)$ and $(3.28)$ jointly imply that $E(a_1,a_2)a_2$ $=0$,
which further gives $E(a_1,a_2)b_2+E(a_1,b_2)a_2=0$ for all $a_1\in
A_1$ and $a_2,b_2\in A_2$. By \cite[Lemma 2.3]{BenkovicEremita} we
conclude that $E(a_1, a_2)=0$. Hence
$f_{12}(a_1,a_2)=\alpha(a_2)a_1+\varphi^{-1}(k_{12}(a_1,a_2))$.
Similarly, we can show that $k_{24}$ is of the desired form as well.
\end{proof}

\begin{lemma}\label{xxsec3.11}
$f_{13}(a_1,a_3)=\tau(a_3)a_1+\varphi^{-1}(k_{13}(a_1,a_3))$ and
$k_{34}(a_3,a_4)=\varphi(\tau(a_3))a_4$ $+\varphi
(f_{34}(a_3,a_4))$, where
$\tau(a_3)=f_{13}(1,a_3)-\varphi^{-1}(k_{13}(1,a_3))$.
\end{lemma}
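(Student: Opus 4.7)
The plan is to mimic the proof of Lemma~\ref{xxsec3.10}, with the relations $(3.24)$ and $(3.27)$ playing the roles that $(3.23)$ and $(3.26)$ played there. The situation here is actually a shade simpler, because $g_{23}(a_2,a_3)$ is \emph{linear} in $a_2$ (rather than quadratic, as $g_{22}(a_2,a_2)$ was), so I expect faithfulness of $M$ alone to suffice and no complete linearization trick via \cite[Lemma~2.3]{BenkovicEremita} should be needed.

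First I will set $a_1=1$ in $(3.24)$. From Lemma~\ref{xxsec3.7}(2), the identity $[f_{13}(a_1,a_3),a_1]=0$ combined with the substitution $a_1\to a_1+1$ forces $f_{13}(1,a_3)\in\mathcal{Z}(A_1)$, while Lemma~\ref{xxsec3.8}(3) gives $k_{13}(1,a_3)\in\mathcal{Z}(A_4)=\pi_B(\mathcal{Z(G)})$ so that $\varphi^{-1}(k_{13}(1,a_3))$ is defined and $a_2 k_{13}(1,a_3)=\varphi^{-1}(k_{13}(1,a_3))\,a_2$. This yields
\[
g_{23}(a_2,a_3)=\tau(a_3)a_2,\qquad \tau(a_3):=f_{13}(1,a_3)-\varphi^{-1}(k_{13}(1,a_3))\in\mathcal{Z}(A_1).
\]

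Substituting this back into $(3.24)$ for arbitrary $a_1$ and transporting the central elements $\tau(a_3)$ and $\varphi^{-1}(k_{13}(a_1,a_3))$ to the proper side, I obtain
\[
\bigl(f_{13}(a_1,a_3)-\tau(a_3)a_1-\varphi^{-1}(k_{13}(a_1,a_3))\bigr)a_2=0
\]
for all $a_1\in A_1$, $a_2\in A_2$, $a_3\in A_3$; faithfulness of $M$ as a left $A$-module then delivers the first identity of the lemma. For $k_{34}$, I will feed $g_{23}(a_2,a_3)=\tau(a_3)a_2$ into $(3.27)$, use $\tau(a_3),f_{34}(a_3,a_4)\in\mathcal{Z}(A_1)$ (the latter from Lemma~\ref{xxsec3.7}(3)) to commute both terms through $a_2$ via $\varphi$, and invoke faithfulness of $M$ as a right $B$-module to conclude.

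No step looks delicate; the only thing to watch is that every element we want to move across $a_2$ genuinely sits in $\pi_A(\mathcal{Z(G)})=\mathcal{Z}(A)$ or $\pi_B(\mathcal{Z(G)})=\mathcal{Z}(B)$, which is guaranteed by hypothesis~(2) of Theorem~\ref{xxsec3.4}. Essentially the whole argument is a linear-in-$a_2$ shadow of the quadratic-in-$a_2$ computation already carried out for Lemma~\ref{xxsec3.10}.
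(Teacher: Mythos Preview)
Your argument is correct and is essentially the same as the paper's: the paper likewise uses $[f_{13}(a_1,a_3),a_1]=0$ with $a_1\mapsto a_1+1$ to get $f_{13}(1,a_3)\in\mathcal{Z}(A_1)$, defines $\tau(a_3)$ and $E(a_1,a_3)=f_{13}(a_1,a_3)-\tau(a_3)a_1-\varphi^{-1}(k_{13}(a_1,a_3))$, deduces $E(a_1,a_3)a_2=0$ from $(3.24)$, and invokes faithfulness of $M$; the statement for $k_{34}$ is then obtained ``similarly, using $(3.27)$.'' Your only addition is making the intermediate identity $g_{23}(a_2,a_3)=\tau(a_3)a_2$ explicit, which the paper leaves implicit.
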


\begin{proof}
Note that $[f_{13}(a_1,a_3),a_1]=0$ for all $a_1\in A_1, a_3\in
A_3$. Substituting $a_1+1$ for $a_1$ gives $f_{13}(1,a_3)\in
\mathcal{Z}(A_1)$ for all $a_3\in A_3$. Let us set
$\tau(a_3)=f_{13}(1,a_3)-\varphi^{-1}(k_{13}(1,a_3))$ and
$E(a_1,a_3)=f_{13}(a_1,a_3)-\tau(a_3)a_1-\varphi^{-1}(k_{13}(a_1,a_3))$.
It follows from $(3.24)$ that $E(a_1,a_3)a_2=0$ for all $a_1\in A_1,
a_2\in A_2, a_3\in A_3$. Since $M=A_2$ is faithful as a left
$A$-module, we obtain $E(a_1, a_3)=0$ and hence
$f_{13}(a_1,a_3)=\tau(a_3)a_1+\varphi^{-1}(k_{13}(a_1,a_3))$.
Similarly, using $(3.27)$ one can prove that $k_{34}$ is of the
desired form as well.
\end{proof}

\begin{lemma}\label{xxsec3.12}
There exist linear mapping $\gamma: A_4\rightarrow \mathcal{Z}(A_1)$
and bilinear mapping $\delta: A_1\times A_4\rightarrow
\mathcal{Z}(A_1)$ such that
$f_{14}(a_1,a_4)=\gamma(a_4)a_1+\delta(a_1,a_4)$.
\end{lemma}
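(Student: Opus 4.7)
The plan is to apply hypothesis~(1) of Theorem~\ref{xxsec3.4} directly and then upgrade the resulting pointwise decomposition to joint linearity by a uniqueness argument. First I would fix an arbitrary $a_4 \in A_4$ and invoke Lemma~\ref{xxsec3.7}(2), which says that $a_1 \mapsto f_{14}(a_1,a_4)$ is a commuting $\mathcal{R}$-linear map on $A = A_1$. By hypothesis~(1) this map is proper, so there exist $\gamma(a_4) \in \mathcal{Z}(A_1)$ and an $\mathcal{R}$-linear map $\delta(\cdot,a_4)\colon A_1 \to \mathcal{Z}(A_1)$ with $f_{14}(a_1,a_4) = \gamma(a_4)a_1 + \delta(a_1,a_4)$. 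This already produces the decomposition at each fixed $a_4$; what remains is to see that $\gamma$ is $\mathcal{R}$-linear in $a_4$ and that $\delta$ is $\mathcal{R}$-bilinear.

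To transfer linearity from $f_{14}$ to $\gamma$ and $\delta$, I would first prove uniqueness of the decomposition. Suppose two such presentations coexist for a fixed $a_4$, and set $\lambda := \gamma_1(a_4) - \gamma_2(a_4) \in \mathcal{Z}(A_1) = \pi_A(\mathcal{Z(G)})$ (using the equality from hypothesis~(2)). Then $\lambda a_1 = \delta_2(a_1,a_4) - \delta_1(a_1,a_4) \in \mathcal{Z}(A_1)$ for every $a_1 \in A_1$, which forces $\lambda[a_1,b_1] = [\lambda a_1, b_1] = 0$ for all $a_1,b_1 \in A_1$. Since $A_1 \neq \mathcal{Z}(A_1)$ by hypothesis~(2), some commutator $[a_1,b_1]$ is nonzero, and Lemma~\ref{xxsec3.1} (which relies on loyalty of $M$ from hypothesis~(3)) forces $\lambda = 0$. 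Hence $\gamma_1(a_4) = \gamma_2(a_4)$ and $\delta_1 = \delta_2$.

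With uniqueness secured, linearity is almost automatic. Given $r,s \in \mathcal{R}$ and $a_4, b_4 \in A_4$, the $\mathcal{R}$-bilinearity of $f_{14}$ expands $f_{14}(a_1, ra_4+sb_4) = rf_{14}(a_1,a_4) + sf_{14}(a_1,b_4)$ into two proper presentations of the same map in $a_1$: one with central coefficient $\gamma(ra_4+sb_4)$, the other with central coefficient $r\gamma(a_4) + s\gamma(b_4)$. Uniqueness identifies the two presentations, so $\gamma$ is $\mathcal{R}$-linear in $a_4$; the analogous comparison of the $\delta$-pieces, together with the built-in linearity of $\delta(\cdot, a_4)$ in its first slot, yields bilinearity of $\delta$.

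The main obstacle is the uniqueness step, since that is where the structural hypotheses of Theorem~\ref{xxsec3.4} actually come into play: without hypothesis~(2) there would be no nonzero commutator in $A_1$ against which to cancel, and without the loyalty of $M$ from hypothesis~(3) the central cancellation of Lemma~\ref{xxsec3.1} would be unavailable. Everything else reduces to routine bookkeeping from hypothesis~(1) and the bilinearity of $f_{14}$.
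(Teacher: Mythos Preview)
Your proposal is correct and follows essentially the same approach as the paper. Both arguments invoke Lemma~\ref{xxsec3.7}(2) together with hypothesis~(1) to obtain the proper form $f_{14}(a_1,a_4)=\gamma(a_4)a_1+\delta(a_1,a_4)$ at each fixed $a_4$, and then establish linearity in $a_4$ via the same central-cancellation trick: the paper writes out $(\gamma(a_4+b_4)-\gamma(a_4)-\gamma(b_4))a_1 \in \mathcal{Z}(A_1)$, commutes with $b_1$, and applies Lemma~\ref{xxsec3.1} directly, whereas you package this same computation as a preliminary uniqueness lemma and then invoke it---but the substance is identical.
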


\begin{proof}
Since $a_1\mapsto f_{14}(a_1,a_4)$ is a commuting mapping of $A_1$
for all $a_4\in A_4$, there exist mappings $\gamma: A_4\rightarrow
\mathcal{Z}(A_1)$ and $\delta: A_1\times A_4\rightarrow
\mathcal{Z}(A_1)$ such that
$$
f_{14}(a_1,a_4)=\gamma(a_4)a_1+\delta(a_1,a_4),
$$
where $\delta$ is $\mathcal{R}$-linear in the first argument. Let us
show that $\gamma$ is $\mathcal{R}$-linear and that $\delta$ is
$\mathcal{R}$-bilinear. It is easy to observe that
$$
f_{14}(a_1,a_4+b_4)=\gamma(a_4+b_4)a_1+\delta(a_1,a_4+b_4)
$$
and
$$
f_{14}(a_1,a_4)+f_{14}(a_1,b_4)=\gamma(a_4)a_1+\delta(a_1,a_4)+\gamma(b_4)a_1+\delta(a_1,b_4).
$$
for all for all $a_1\in A_1$ and $a_4,b_4\in A_4$. Therefore
$$
\big(\gamma(a_4+b_4)-\gamma(a_4)-\gamma(b_4)\big)a_1+\delta(a_1,a_4+b_4)-\delta(a_1,a_4)-\delta(a_1,b_4)=0
$$
for all $a_1\in A_1$ and $a_4,b_4\in A_4$. Note that both $\gamma$
and $\delta$ map into $\mathcal{Z}(A_1)$ and hence
$(\gamma(a_4+b_4)-\gamma(a_4)-\gamma(b_4))[a_1,b_1]=0$ for all $a_1,
b_1\in A_1$ and $a_4,b_4\in A_4$. Applying Lemma \ref{xxsec3.1}
yields that $\gamma$ is $\mathcal{R}$-linear. Consequently, $\delta$
is $\mathcal{R}$-linear in the second argument.
\end{proof}

\begin{lemma}\label{xxsec3.13}
$k_{14}(a_1,a_4)=\gamma'(a_1)a_4+\varphi(\delta(a_1,a_4))$, where
$\gamma'(a_1)=k_{14}(a_1,1)-\varphi(\delta(a_1,1))$.
\end{lemma}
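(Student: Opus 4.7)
The plan is to define
\[
E(a_1, a_4) := k_{14}(a_1, a_4) - \gamma'(a_1)\, a_4 - \varphi\!\left(\delta(a_1, a_4)\right)
\]
and prove $E \equiv 0$; by construction $E(a_1, 1) = 0$. The first step is to extract a key identity from the matrix relation $(\bigstar)$. From its top-right entry $Fa_2 + Ga_4 - a_1 G - a_2 K = 0$, I specialize to $a_3 = 0$ and extract the component that is odd in $a_1$ (via $a_1 \mapsto -a_1$) and then odd in $a_2$ (via $a_2 \mapsto -a_2$). Since $\mathcal{G}$ is $2$-torsion-free, all quadratic and constant pieces drop out, leaving
\begin{equation*}
f_{14}(a_1, a_4)\, a_2 + g_{12}(a_1, a_2)\, a_4 - a_1\, g_{24}(a_2, a_4) - a_2\, k_{14}(a_1, a_4) = 0. \tag{$\star$}
\end{equation*}

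Next I eliminate the unknown $g$-functions from $(\star)$. Setting $a_1 = 1$ in $(\star)$ expresses $g_{24}(a_2, a_4)$ in terms of the remaining quantities; substituting this back into $(\star)$ and using Lemma \ref{xxsec3.12} (the $\gamma(a_4)\,a_1$ pieces cancel because $\gamma(a_4) \in \mathcal{Z}(A)$, leaving $f_{14}(a_1, a_4) - a_1 f_{14}(1, a_4) = \delta(a_1, a_4) - a_1 \delta(1, a_4)$) yields a reduced identity. Specializing that identity at $a_4 = 1$ and pulling the central elements $\delta(a_1, 1), \delta(1, 1) \in \mathcal{Z}(A)$ across $a_2$ via $\varphi$, invoking the very definition of $\gamma'$, solves for
\[
g_{12}(a_1, a_2) - a_1\, g_{12}(1, a_2) = a_2\, \gamma'(a_1) - a_1 a_2\, \gamma'(1).
\]
Plugging this back into the reduced identity and again pulling $\delta(\cdot, a_4) \in \mathcal{Z}(A)$ across $a_2$ via $\varphi$, every remaining term collapses exactly to
\begin{equation*}
a_2\, E(a_1, a_4) = a_1 a_2\, E(1, a_4) \quad \text{for all } a_1 \in A,\; a_2 \in M,\; a_4 \in B. \tag{$\clubsuit$}
\end{equation*}

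Finally, I read off a commutator relation from $(\clubsuit)$. On the one hand, left-multiplying $(\clubsuit)$ by $a_1' \in A$ gives $a_1' a_2\, E(a_1, a_4) = a_1' a_1 a_2\, E(1, a_4)$; on the other hand, substituting $a_2 \mapsto a_1' a_2$ directly into $(\clubsuit)$ produces the same left-hand side but right-hand side $a_1 a_1' a_2\, E(1, a_4)$. Equating these yields
\[
[a_1, a_1']\, a_2\, E(1, a_4) = 0 \quad \text{for all } a_1, a_1' \in A,\; a_2 \in M.
\]
Loyalty of $M$ then forces $[a_1, a_1'] = 0$ or $E(1, a_4) = 0$; since $A$ is noncommutative by condition $(2)$, some pair $a_1, a_1'$ has $[a_1, a_1'] \neq 0$, and hence $E(1, a_4) = 0$ for every $a_4 \in B$. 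Feeding this back into $(\clubsuit)$ gives $a_2\, E(a_1, a_4) = 0$ for all $a_2 \in M$, and since $M$ is faithful as a right $B$-module, $E(a_1, a_4) = 0$, which is the content of the lemma.

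The main obstacle is the bookkeeping in the middle step: the three unknown $g$-functions and the $f_{14}, k_{14}, \delta$-terms must be reduced in the right order, pulling central elements through $a_2$ via $\varphi$ at the appropriate moments, so that everything collapses to the clean identity $(\clubsuit)$. Once $(\clubsuit)$ is in hand, the clash between left-multiplication and the substitution $a_2 \mapsto a_1' a_2$, combined with loyalty and the noncommutativity of $A$, dispatches the lemma quickly.
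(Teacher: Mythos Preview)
Your proof is correct and follows essentially the same route as the paper: both derive the identity $(\star)$ (the paper's (3.29)), eliminate the $g$-functions to reach an identity of the form $(\clubsuit)$ (the paper's (3.32)), and then finish via the substitution $a_2\mapsto b_1a_2$ together with loyalty of $M$, noncommutativity of $A$, and faithfulness of $M$ as a right $B$-module. The only cosmetic difference is that the paper solves for $g_{12}$ and $g_{24}$ explicitly using the earlier relations (3.22) and (3.25), whereas you eliminate them more directly by subtracting the specializations of $(\star)$ at $a_1=1$ and $a_4=1$.
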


\begin{proof}
By $(3.22)$ we know that
$g_{12}(1,a_2)=f_{11}(1,1)a_2-a_2k_{11}(1,1)$ for all $a_2\in A_2$.
On the other hand, the equations $(3.22)-(3.27)$ together with
$(3.15)$ imply that
$$
f_{14}(a_1,a_4)a_2+g_{12}(a_1,a_2)a_4=a_1g_{24}(a_2,a_4)+a_2k_{14}(a_1,a_4)
\eqno(3.29)
$$
for all $a_1\in A_1, a_2\in A_2, a_4\in A_4$. Let us set $a_1=1$ in
$(3.29)$. Then
$$
g_{24}(a_2,a_4)=a_2\big(\zeta
a_4+\varphi(f_{14}(1,a_4))-k_{14}(1,a_4)\big) \eqno(3.30)
$$
for all $a_2\in A_2, a_4\in A_4$, where
$\zeta=\varphi(f_{11}(1,1))-k_{11}(1,1)$. Similarly, using $(3.25)$
and $(3.29)$ we have
$$ g_{12}(a_1,a_2)=\big(\theta
a_1+\varphi^{-1}(k_{14}(a_1,1))-f_{14}(a_1,1)\big)a_2 \eqno(3.31)
$$
for all $a_1\in A_1, a_2\in A_2$, where
$\theta=\varphi^{-1}(k_{44}(1,1))-f_{44}(1,1)$. Now the equations
$(3.29)-(3.31)$ and Lemma \ref{xxsec3.12} jointly show that
$$
(\gamma(a_4)a_1+\delta(a_1,a_4))a_2+\big(\theta a_1+\varphi^{-1}(k_{14}(a_1,1))-f_{14}(a_1,1)\big)a_2a_4
$$
$$
=a_2k_{14}(a_1,a_4)+a_1a_2\big(\zeta
a_4+\varphi(f_{14}(1,a_4))-k_{14}(1,a_4)\big)
$$
for all $a_1\in A_1, a_2\in A_2, a_4\in A_4$. That is,
$$
a_1a_2\big((\zeta+\varphi(\gamma(1)-\theta)a_4+\varphi(\delta(1,a_4))-k_{14}(1,a_4)\big)
$$
$$
=a_2\big(\gamma'(a_1)a_4+\varphi(\delta(a_1,a_4))-k_{14}(a_1,a_4)\big)
\eqno(3.32)
$$
for all $a_1\in A_1, a_2\in A_2, a_4\in A_4$. Replacing $a_2$ by
$b_1a_2$ in $(3.32)$ and subtracting the left multiplication of
$(3.32)$ by $b_1$ gives
$$
[a_1,b_1]a_2\big((\zeta+\varphi(\gamma(1)-\theta)a_4+\varphi(\delta(1,a_4))-k_{14}(1,a_4)\big)=0
$$
for all $a_1, b_1\in A_1, a_2\in A_2, a_4\in A_4$. Note that $M=A_2$
is loyal and $A=A_1$ is noncommutative. It follows that
$$
k_{14}(1,a_4)=(\zeta+\varphi(\gamma(1)-\theta)a_4+\varphi(\delta(1,a_4))
$$
for all $a_4\in A_4$. Consequently, the relation $(3.32)$ implies
that
$$
A_2\big(\gamma'(a_1)a_4+\varphi(\delta(a_1,a_4))-k_{14}(a_1,a_4)\big)=0
$$
for all $a_1, a_4\in A_4$. Since $A_2=M$ is faithful as a right
$B$-module, $k_{14}$ is of the desired form.
\end{proof}

{\noindent}{\bf Proof of Theorem \ref{xxsec3.4}:} Let us write
$\varepsilon=\theta-\gamma(1)$ and $\varepsilon'=\zeta-\gamma'(1)$.
By the equations $(3.30)$ and $(3.31)$ and the form of $f_{14},
k_{14}$, we have the following relations:
$$
g_{12}(a_1,a_2)=\varepsilon a_1a_2+\varphi^{-1}(\gamma'(a_1))a_2,
\quad g_{24}(a_2,a_4)=a_2(\varepsilon' a_4+\varphi(\gamma(a_4)))
\eqno(3.33)
$$
for all $a_1\in A_1, a_2\in A_2, a_4\in A_4$. By $(3.1)$ and those
similar computational procedures we get
$$
h_{13}(a_1,a_3)=a_3\varepsilon a_1+\gamma'(a_1)a_3, \quad
h_{34}(a_3,a_4)=\varepsilon' a_4a_3+\varphi(\gamma(a_4))a_3
\eqno(3.34)
$$
for all $a_1\in A_1, a_3\in A_3, a_4\in A_4$. Taking $a_1=1$ and
$a_4=1$ into $(3.29)$ and combining Lemma \ref{xxsec3.12}, Lemma
\ref{xxsec3.13} with $(3.33)$, we conclude that $\varepsilon
a_2=a_2\varepsilon'$ for all $a_2\in A_2$. Note that $\varepsilon\in
\mathcal{Z}(A_1)=\pi_A(\mathcal{Z(G)})$ and $\varepsilon'\in
\mathcal{Z}(A_4)=\pi_B(\mathcal{Z(G)})$. In view of \cite[Lemma
3.2]{XiaoWei1} we obtain $\left[
\begin{array}
[c]{cc}%
\varepsilon & 0\\
0 & \varepsilon'
\end{array}
\right]\in \mathcal{Z(G)}$.

It follows from $(3.22)$ and $(3.33)$ that
$$
\big(f_{11}(a_1,a_1)-\varepsilon a_1^2-\varphi^{-1}(\gamma'(a_1))a_1-\varphi^{-1}(k_{11}(a_1,a_1))\big)a_2=0
$$
for all $a_1\in A_1, a_2\in A_2$. Since $A_2=M$ is faithful as a
left $A$-module,
$$
f_{11}(a_1,a_1)=\varepsilon
a_1^2+\varphi^{-1}(\gamma'(a_1))a_1+\varphi^{-1}(k_{11}(a_1,a_1))\eqno(3.35)
$$
for all $a_1\in A_1$. Similarly,
$$
k_{44}(a_4,a_4)=\varepsilon'
a_4^2+\varphi(\gamma(a_4))a_4+\varphi(f_{44}(a_4,a_4))\eqno(3.36)
$$
for all $a_4\in A_4$.

Finally, let us set $z=\left[
\begin{array}
[c]{cc}%
\varepsilon & 0\\
0 & \varepsilon'
\end{array}
\right]$ and define the mapping $\mu: \mathcal{G}\rightarrow
\mathcal{Z(G)}$ by
$$
\begin{aligned}
&\left[
\begin{array}
[c]{cc}%
a_1 & a_2\\
a_3 & a_4\\
\end{array}
\right]\mapsto \\
&\left[
\begin{array}
[c]{cc}%
\varphi^{-1}(\gamma'(a_1))+\gamma(a_4)+\alpha(a_2)+\tau(a_3) & 0\\
0 & \gamma'(a_1)+\varphi(\gamma(a_4)+\alpha(a_2)+\tau(a_3))\\
\end{array}
\right].
\end{aligned}
$$
In view of all conclusions derived above, we see that
$$
\begin{aligned}
\nu(x):&={\mathfrak T}_{\mathfrak q}(x)-z x^2-\mu(x)x\\
&\equiv \left[
\begin{array}
[c]{cc}%
f_{23}(a_2,a_3)-\varepsilon a_2a_3 & 0\\
0 & k_{23}(a_2,a_3)-\varepsilon' a_3a_2\\
\end{array}
\right]\quad ({\rm mod} \hspace{2pt}\mathcal{Z(G)})
\end{aligned}
$$
where $x=\left[
\begin{array}
[c]{cc}%
a_1 & a_2\\
a_3 & a_4\\
\end{array}
\right]$. Therefore we can write
$$
\mathfrak{T_q}(x)=zx^2+\mu(x)x+\left[
\begin{array}
[c]{cc}%
f_{23}(a_2,a_3)-\varepsilon a_2a_3 & 0\\
0 & k_{23}(a_2,a_3)-\varepsilon' a_3a_2\\
\end{array}
\right]+c
$$
for some $c\in \mathcal{Z(G)}$. Since $\mathfrak{q}$ is a commuting
mapping, we have
$$
\left[\left[
\begin{array}
[c]{cc}%
f_{23}(a_2,a_3)-\varepsilon a_2a_3 & 0\\
0 & k_{23}(a_2,a_3)-\varepsilon' a_3a_2\\
\end{array}
\right] ,\left[
\begin{array}
[c]{cc}%
a_1 & a_2\\
a_3 & a_4\\
\end{array}
\right] \right]=0.
$$
This implies that $f_{23}(a_2,a_3)-\varepsilon a_2a_3\in {\mathcal
Z}(A_1)=\pi_A(\mathcal{Z(G)})$ and $k_{23}(a_2,a_3)-\varepsilon'
a_3a_2\in {\mathcal Z}(A_4)=\pi_B(\mathcal{Z(G)})$. Moreover, it
shows that
$$
(f_{23}(a_2,a_3)-\varepsilon
a_2a_3)a_2=a_2(k_{23}(a_2,a_3)-\varepsilon' a_3a_2)
$$
and
$$
a_3(f_{23}(a_2,a_3)-\varepsilon a_2a_3)=(k_{23}(a_2,a_3)-\varepsilon' a_3a_2)a_3.
$$
for all $a_2\in A_2, a_3\in A_3$. For convenience, let us write
$f(a_2,a_3)=f_{23}(a_2,a_3)-\varepsilon a_2a_3$ and
$k(a_2,a_3)=k_{23}(a_2,a_3)-\varepsilon' a_3a_2$. Thus
$$
\big(f(a_2,a_3)-\varphi^{-1}(k(a_2,a_3)) \big)a_2=0
$$
for all $a_2\in A_2, a_3\in A_3$. A linearization of the last
relation gives
$$
\big(f(a_2,a_3)-\varphi^{-1}(k(a_2,a_3)) \big)b_2+
\big(f(b_2,a_3)-\varphi^{-1}(k(b_2,a_3)) \big)a_2=0
$$
for all $a_2, b_2\in A_2, a_3\in A_3$. Note that the hypothesis
$A_2=M$ is loyal as an $(A,B)$-bimodule. It follows from \cite[Lemma
2.3]{BenkovicEremita} that $f(a_2,a_3)-\varphi^{-1}(k(a_2,a_3))=0$
for all $a_2\in A_2, a_3\in A_3$. Hence $\nu$ maps $\mathcal{G}$
into $\mathcal{Z(G)}$ and this completes the proof of the
theorem.\qed \vspace{6pt}

As a direct consequence of Theorem \ref{xxsec3.4} we get

\begin{corollary}\cite[Theorem 3.1]{BenkovicEremita}\label{xxsec3.14}
Let $\mathcal{T}=\left[
\begin{array}
[c]{cc}%
A & M\\
O & B\\
\end{array}
\right]$ be a $2$-torsionfree triangular algebra over a commutative
ring $\mathcal{R}$ and ${\mathfrak q}\colon \mathcal{T}\times
\mathcal{T}\longrightarrow \mathcal{T}$ be an $\mathcal{R}$-bilinear
mapping. If
\begin{enumerate}
\item[{\rm(1)}] every commuting linear mapping on $A$ or $B$ is proper;
\item[{\rm(2)}] $\pi_A(\mathcal{Z(T)})=\mathcal{Z}(A)\neq A$ and
$\pi_B(\mathcal{Z(T)})=\mathcal{Z}(B)\neq B$;
\item[{\rm(3)}] $M$ is loyal,
\end{enumerate}
then every commuting trace ${\mathfrak T}_{\mathfrak q}:
\mathcal{T}\longrightarrow \mathcal{T}$ of ${\mathfrak q}$ is
proper.
\end{corollary}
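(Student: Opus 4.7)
The plan is to observe that every triangular algebra is a special instance of the generalized matrix algebra studied in Theorem \ref{xxsec3.4}, namely the one obtained by taking the bimodule $N$ to be zero in the Morita context $(A,B,M,N,\Phi_{MN},\Psi_{NM})$. Because the statement of Theorem \ref{xxsec3.4} imposes no constraint whatsoever on $N$ (this is emphasized in the paragraph preceding Lemma \ref{xxsec3.1}), the choice $N=O$ is admissible, and the resulting algebra $\left[\smallmatrix A & M\\ O & B\endsmallmatrix\right]$ coincides with $\mathcal{T}$.

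First I would verify that the hypotheses of the corollary translate verbatim to those of Theorem \ref{xxsec3.4} once we identify $\mathcal{T}$ with the corresponding generalized matrix algebra. Specifically: $2$-torsionfreeness is preserved; the hypothesis that every commuting linear mapping on $A$ or $B$ is proper is literally the same; the equalities $\pi_A(\mathcal{Z(T)})=\mathcal{Z}(A)\neq A$ and $\pi_B(\mathcal{Z(T)})=\mathcal{Z}(B)\neq B$ coincide with the conditions on $\pi_A(\mathcal{Z(G)})$ and $\pi_B(\mathcal{Z(G)})$ (and by the earlier discussion the central projections and the isomorphism $\varphi$ make sense in precisely the same way, since the faithfulness of $M$ on both sides is still assumed); and loyalty of $M$ as an $(A,B)$-bimodule is identical in both settings.

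Once these identifications are made, I would simply invoke Theorem \ref{xxsec3.4}, with the bilinear mapping $\mathfrak q$ regarded as a bilinear mapping on $\mathcal{G}=\mathcal{T}$, to conclude that the commuting trace $\mathfrak{T}_{\mathfrak q}$ is proper. Note that in the proof of Theorem \ref{xxsec3.4}, all expressions involving the component $a_3\in N$ (such as the mappings $f_{13},f_{23},f_{33},f_{34},g_{23},h_{13},h_{33},h_{34},k_{13},k_{23},k_{33},k_{34}$) vanish automatically when $N=O$, so the decomposition given by $\nu$, $\mu$ and $z$ still yields an expression of the form $\mathfrak{T}_{\mathfrak q}(x)=zx^2+\mu(x)x+\nu(x)$ with $\nu$ mapping into $\mathcal{Z(T)}$, which is the definition of properness.

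There is no substantial obstacle here: the corollary is a direct specialization of Theorem \ref{xxsec3.4}. The only thing worth checking carefully is that no argument in the proof of Theorem \ref{xxsec3.4} tacitly requires $N\neq 0$ (for instance, in applications of \cite[Lemma 2.3]{BenkovicEremita} or of Lemma \ref{xxsec3.1}, which rely only on loyalty of $M$ and on the noncommutativity of $A$), and a quick inspection confirms that this is indeed the case. Thus the proof reduces to recording the above identification and appealing to Theorem \ref{xxsec3.4}.
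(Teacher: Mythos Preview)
Your proposal is correct and matches the paper's approach exactly: the paper presents Corollary \ref{xxsec3.14} simply as ``a direct consequence of Theorem \ref{xxsec3.4}'' with no further argument, and your observation that $\mathcal{T}$ is the generalized matrix algebra with $N=O$ (together with the verification that all hypotheses carry over verbatim) is precisely the intended specialization.
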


In particular, we also have

\begin{corollary}\label{xxsec3.15}\cite[Corollary 3.4]{BenkovicEremita}
Let $n\geq 2$ and $\mathcal{R}$ be a $2$-torsionfree commutative
domain. Suppose that ${\mathfrak q}\colon  {\mathcal
T}_n(\mathcal{R})\times {\mathcal T}_n(\mathcal{R})\longrightarrow
{\mathcal T}_n(\mathcal{R})$ is an $\mathcal{R}$-bilinear mapping.
Then every commuting trace ${\mathfrak T}_{\mathfrak q}\colon
{\mathcal T}_n(\mathcal{R})\longrightarrow {\mathcal
T}_n(\mathcal{R})$ of $\mathfrak{q}$ is proper.
\end{corollary}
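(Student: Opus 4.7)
The plan is to realize $\mathcal{T}_n(\mathcal{R})$ as a triangular algebra of the form to which Corollary \ref{xxsec3.14} can be applied. Via the idempotent $e = e_{11} + \cdots + e_{kk}$ for some fixed $1 \leq k \leq n-1$, one obtains the standard decomposition
$$
\mathcal{T}_n(\mathcal{R}) = \left[\begin{array}{cc} \mathcal{T}_k(\mathcal{R}) & \mathcal{R}^{k\times(n-k)} \\ O & \mathcal{T}_{n-k}(\mathcal{R}) \end{array}\right],
$$
in which the bimodule $M = \mathcal{R}^{k\times(n-k)}$ inherits the natural left action of $\mathcal{T}_k(\mathcal{R})$ and right action of $\mathcal{T}_{n-k}(\mathcal{R})$. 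Hypothesis (2) of Corollary \ref{xxsec3.14} demands that both corners be noncommutative, so for $n \geq 4$ I would take $k = 2$; then both $\mathcal{T}_2(\mathcal{R})$ and $\mathcal{T}_{n-2}(\mathcal{R})$ have noncentral elements.

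With $k = 2$ fixed (and $n \geq 4$), I would verify the three hypotheses of Corollary \ref{xxsec3.14} in turn. Hypothesis (1), that every commuting linear mapping on each corner is proper, is Cheung's theorem \cite{Cheung2}. For hypothesis (2), a short calculation gives $\mathcal{Z}(\mathcal{T}_m(\mathcal{R})) = \mathcal{R}\cdot I_m$ for each $m \geq 1$, and the two projections of $\mathcal{Z}(\mathcal{T}_n(\mathcal{R})) = \mathcal{R}\cdot I_n$ land precisely on the scalar matrices of the two corners. Hypothesis (3), loyalty of $M$, is where the domain hypothesis on $\mathcal{R}$ enters: if $aMb = 0$ for some $a \in \mathcal{T}_k(\mathcal{R})$ and $b \in \mathcal{T}_{n-k}(\mathcal{R})$, testing against the matrix units $e_{ij} \in M$ yields the entrywise identities $a_{pi}b_{jq} = 0$, and the absence of zero divisors in $\mathcal{R}$ forces $a = 0$ or $b = 0$.

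The main obstacle is that for $n = 2$ and $n = 3$ no choice of $k$ leaves both corners noncommutative, so Corollary \ref{xxsec3.14} does not apply off the shelf. I would dispose of $n = 2$ by direct calculation: $\mathcal{T}_2(\mathcal{R})$ is a free $\mathcal{R}$-module of rank three, and writing a generic commuting bilinear trace in the basis $\{e_{11}, e_{12}, e_{22}\}$ and expanding $[{\mathfrak T}_{\mathfrak q}(x), x] = 0$ produces enough scalar identities to force ${\mathfrak T}_{\mathfrak q}$ into proper form. For $n = 3$ I would either appeal to the theorem of Beidar, Bre\v{s}ar and Chebotar \cite{BeidarBresarChebotar} on commuting multilinear traces of upper triangular matrix algebras, or rerun the proof of Theorem \ref{xxsec3.4} with the degenerate decomposition $A = \mathcal{T}_2(\mathcal{R})$, $B = \mathcal{R}$, checking that every invocation of the condition $\mathcal{Z}(B) \ne B$ in that proof can be salvaged using the fact that $\mathcal{R}$ is a domain (so that Lemmas \ref{xxsec3.1} and \ref{xxsec3.2} remain in force).
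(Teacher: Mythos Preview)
Your approach is sound and, in fact, more careful than the paper itself, which simply records Corollary~\ref{xxsec3.15} as a citation to \cite[Corollary 3.4]{BenkovicEremita} under the heading ``In particular, we also have'' after Corollary~\ref{xxsec3.14}, without spelling out the verification or addressing the small-$n$ obstruction you noticed.

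For $n\geq 4$ your reduction via $k=2$ and the check of hypotheses (1)--(3) is exactly right; the loyalty argument using matrix units and the domain hypothesis is clean.

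For $n=3$ there is a simpler route than rerunning the proof of Theorem~\ref{xxsec3.4} by hand: the paper's own Proposition~\ref{xxsec3.17} (stated for generalized matrix algebras, hence applicable with $N=0$) covers precisely the situation where the $(1,1)$-corner is $\mathcal{R}$ itself. Writing
\[
\mathcal{T}_3(\mathcal{R})=\left[\begin{array}{cc}\mathcal{R} & \mathcal{R}^{1\times 2}\\ O & \mathcal{T}_2(\mathcal{R})\end{array}\right],
\]
one checks that $B=\mathcal{T}_2(\mathcal{R})$ is noncommutative and central over $\mathcal{R}$, that commuting linear maps on $B$ are proper (Cheung), that condition~(2) of Proposition~\ref{xxsec3.17} holds because $\mathcal{R}$ is a domain, and that condition~(3) holds with $m_0=[1,0]$ and $b_0=e_{12}$. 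This is exactly parallel to how the paper treats $\mathcal{M}_3(\mathcal{R})$ in the proof of Corollary~\ref{xxsec3.18}, and it spares you the case analysis inside Theorem~\ref{xxsec3.4}.

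For $n=2$ neither Corollary~\ref{xxsec3.14} nor Proposition~\ref{xxsec3.17} applies (the lower corner is $\mathcal{R}$, commutative), so your plan to settle it by direct computation in the rank-three $\mathcal{R}$-module $\mathcal{T}_2(\mathcal{R})$ is the right one; alternatively one may invoke \cite{BeidarBresarChebotar} directly, as you suggest.
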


\begin{corollary}\label{xxsec3.16}\cite[Corollary 3.5]{BenkovicEremita}
Let $\mathcal{N}$ be a nest of a Hilbert space $\mathbf{H}$. Suppose
that ${\mathfrak q}\colon  {\mathcal T}(\mathcal{N})\times {\mathcal
T}(\mathcal{N})\longrightarrow {\mathcal T}(\mathcal{N})$ is an
$\mathcal{R}$-bilinear mapping. Then every commuting trace
${\mathfrak T}_{\mathfrak q}\colon {\mathcal
T}(\mathcal{N})\longrightarrow {\mathcal T}(\mathcal{N})$ of
$\mathfrak{q}$ is proper.
\end{corollary}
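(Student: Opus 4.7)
The plan is to deduce this corollary from Corollary \ref{xxsec3.14} (the triangular-algebra specialization of Theorem \ref{xxsec3.4}) by checking that a Hilbert space nest algebra $\mathcal{T}(\mathcal{N})$ fits into the required framework. The degenerate case $\mathcal{N} = \{0, \mathbf{H}\}$ gives $\mathcal{T}(\mathcal{N}) = \mathcal{B}(\mathbf{H})$, which is a prime algebra, so the Lee--Wong--Lin--Wang theorem \cite{LeeWongLinWang} on commuting traces of multilinear mappings on prime rings handles that case directly. Otherwise, I pick a nontrivial $N \in \mathcal{N}$ with $\dim N, \dim N^\perp \geq 2$ (the low-dimensional exceptions reduce to upper triangular matrix algebras covered by Corollary \ref{xxsec3.15}), and let $P$ denote the orthogonal projection onto $N$. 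Then $P$ is a nontrivial idempotent in $\mathcal{T}(\mathcal{N})$, and the Peirce decomposition relative to $P$ identifies
$$
\mathcal{T}(\mathcal{N}) = \left[\begin{array}{cc} A & M \\ O & B \end{array}\right],
$$
where $A = P\mathcal{T}(\mathcal{N})P$ and $B = (I-P)\mathcal{T}(\mathcal{N})(I-P)$ are themselves nest algebras, acting on $N$ and $N^\perp$ with the induced nests, while $M = P\mathcal{T}(\mathcal{N})(I-P)$ is canonically the full space $\mathcal{B}(N^\perp, N)$ of bounded operators.

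Next I would verify the three hypotheses of Corollary \ref{xxsec3.14} for this decomposition. Loyalty of $M$ follows from a rank-one construction: given nonzero $a \in A$ and $b \in B$, choose $\xi \in N$ and $\zeta \in N^\perp$ with $a\xi \neq 0$ and $b\zeta \neq 0$, and define $X \in M$ by $X(y) = \langle y, b\zeta \rangle \xi$; then $(aXb)(\zeta) = \|b\zeta\|^2 a\xi \neq 0$, so $aMb \neq 0$. For hypothesis (2), it is classical that the center of any Hilbert space nest algebra consists of scalar multiples of the identity, so $\mathcal{Z}(\mathcal{T}(\mathcal{N})) = \mathbb{C}I$, $\mathcal{Z}(A) = \mathbb{C}I_N$ and $\mathcal{Z}(B) = \mathbb{C}I_{N^\perp}$; one reads off that $\pi_A(\mathcal{Z}(\mathcal{T}(\mathcal{N}))) = \mathcal{Z}(A)$ and $\pi_B(\mathcal{Z}(\mathcal{T}(\mathcal{N}))) = \mathcal{Z}(B)$, and the dimension choice forces $\mathcal{Z}(A) \neq A$ and $\mathcal{Z}(B) \neq B$. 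Hypothesis (1), namely that every commuting linear mapping on each of the nest algebras $A$ and $B$ is proper, is the content of Cheung's theorem \cite{Cheung2}, which explicitly treats nest algebras as a main example.

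With all three hypotheses in hand, Corollary \ref{xxsec3.14} applies and yields the conclusion that every commuting trace of a bilinear mapping on $\mathcal{T}(\mathcal{N})$ is proper. The principal obstacle in this reduction is hypothesis (1): the rank-one argument for loyalty and the center computation are structural facts about nest algebras that are essentially routine once the Peirce decomposition is set up, but the properness of commuting linear mappings on a nest algebra itself is substantive and has to be imported from the earlier literature on triangular algebras. Absent a direct appeal to Cheung's result, one could in principle iterate by refining the nest via another nontrivial projection in $\mathcal{N}|_N$ or $\mathcal{N}|_{N^\perp}$ and arguing inductively on the ``complexity'' of the nest.
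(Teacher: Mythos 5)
Your main-case reduction is exactly the one the paper intends (it simply cites \cite[Corollary 3.5]{BenkovicEremita} and Corollary \ref{xxsec3.14}), and the verifications you give there are correct: for a nontrivial $N\in\mathcal{N}$ with $\dim N\geq 2$ and $\dim N^{\perp}\geq 2$, the Peirce decomposition at the projection onto $N$ exhibits $\mathcal{T}(\mathcal{N})$ as a triangular algebra whose corners are the induced nest algebras, your rank-one operator proves loyalty of $M=\mathcal{B}(N^{\perp},N)$, the center computation gives hypothesis (2), and Cheung's theorem gives hypothesis (1); the trivial nest, where $\mathcal{T}(\mathcal{N})=\mathcal{B}(\mathbf{H})$, is correctly routed through the prime-ring results.

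The gap is in the parenthetical claim that ``the low-dimensional exceptions reduce to upper triangular matrix algebras covered by Corollary \ref{xxsec3.15}.'' That is false. Take $\mathbf{H}$ infinite dimensional and $\mathcal{N}=\{0,\mathbb{C}e_{1},\mathbf{H}\}$: no nontrivial element of the nest has both dimension and codimension at least $2$, and the only available decomposition is $\mathcal{T}(\mathcal{N})=\left[\begin{smallmatrix}\mathbb{C} & M\\ O & \mathcal{B}(e_{1}^{\perp})\end{smallmatrix}\right]$, which is not any $\mathcal{T}_{n}(\mathbb{C})$ and whose $(1,1)$-corner is commutative, so hypothesis (2) of Corollary \ref{xxsec3.14} ($\mathcal{Z}(A)\neq A$) fails and neither Corollary \ref{xxsec3.14} nor Corollary \ref{xxsec3.15} applies. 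This is precisely the situation Proposition \ref{xxsec3.17} was built for: $A=\mathcal{R}=\mathbb{C}$, $B=\mathcal{B}(e_{1}^{\perp})$ noncommutative and central, condition (2) of that proposition is trivial over a field, and condition (3) is witnessed by any $m_{0},b_{0}$ with $m_{0}b_{0}\notin\mathbb{C}m_{0}$, which exist since $\dim e_{1}^{\perp}\geq 2$. The mirror case of a codimension-one atom at the top, e.g.\ $\mathcal{N}=\{0,N_{2},\mathbf{H}\}$ with $\dim N_{2}^{\perp}=1$, gives $\left[\begin{smallmatrix}\mathcal{B}(N_{2}) & M\\ O & \mathbb{C}\end{smallmatrix}\right]$ and needs an extra step, since Proposition \ref{xxsec3.17} puts the scalar corner in the $(1,1)$-position: one can pass to the reflected nest $\mathcal{N}^{\perp}=\{N^{\perp}\colon N\in\mathcal{N}\}$ via the adjoint anti-isomorphism $T\mapsto T^{*}$, under which commuting traces and properness are preserved, and then apply Proposition \ref{xxsec3.17}. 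The remaining genuinely finite cases ($\dim\mathbf{H}\leq 3$) are either $\mathcal{T}_{2}(\mathbb{C})$, $\mathcal{T}_{3}(\mathbb{C})$, $\mathcal{M}_{n}(\mathbb{C})$, or again of the form just treated. So the architecture of your proof is right, but the exceptional nests must be routed through Proposition \ref{xxsec3.17} (and its adjoint reflection), not through Corollary \ref{xxsec3.15}.
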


In order to handle the commuting traces of bilinear mappings on full
matrix algebras we need a technical lemma in below. Recall that an
algebra $\mathcal{A}$ over a commutative ring $\mathcal{R}$ is said
to be {\em central over} $\mathcal{R}$ if
$\mathcal{Z(A)}=\mathcal{R}1$.

\begin{proposition}\label{xxsec3.17}
Let $\mathcal{G}=\left[
\begin{array}
[c]{cc}%
\mathcal{R} & M\\
N & B\\
\end{array}
\right]$ be a $2$-torsionfree generalized matrix algebra over a
commutative ring $\mathcal{R}$, where $B$ is a noncommutative
algebra over $\mathcal{R}$ and both $\mathcal{G}$ and $B$ are
central over $\mathcal{R}$. Suppose that ${\mathfrak q}\colon
\mathcal{G}\times \mathcal{G}\longrightarrow \mathcal{G}$ is an
$\mathcal{R}$-bilinear mapping. If
\begin{enumerate}
\item[{\rm(1)}] every commuting linear mapping of $B$ is proper,
\item[{\rm(2)}] for any $r\in\mathcal{R}$ and $m\in M$, $rm=0$
implies that $r=0$ or $m=0$,
\item[{\rm(3)}] there exist $m_0\in M$ and $b_0\in B$ such that
$m_0b_0$ and $m_0$ are $\mathcal{R}$-linearly independent,
\end{enumerate}
then every commuting trace ${\mathfrak T}_{\mathfrak q}:
\mathcal{G}\longrightarrow \mathcal{G}$ of $\mathfrak{q}$ is proper.
\end{proposition}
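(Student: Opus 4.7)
The plan is to mimic the proof of Theorem \ref{xxsec3.4} line by line, replacing the role played there by the noncommutativity of $A$ (i.e.\ by $\mathcal{Z}(A)\neq A$) with a combination of the three hypotheses of the proposition. As a first step I would unwind the centrality assumptions: since $\mathcal{G}$ and $B$ are central over $\mathcal{R}$, one has $\mathcal{Z(G)}=\mathcal{R}\cdot 1$, $\pi_A(\mathcal{Z(G)})=\mathcal{R}=A$, and $\pi_B(\mathcal{Z(G)})=\mathcal{R}\cdot 1_B=\mathcal{Z}(B)\neq B$ (the last inequality uses that $B$ is noncommutative). Thus the isomorphism $\varphi:\pi_A(\mathcal{Z(G)})\to\pi_B(\mathcal{Z(G)})$ is simply the canonical embedding $r\mapsto r\cdot 1_B$. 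Only the $A$-side of condition $(2)$ of Theorem \ref{xxsec3.4} fails, so the structural skeleton of that proof will still apply and the task is to supply substitutes at the two or three places where the original argument truly needed $[a,b]\neq 0$ inside $A$.

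Next I would decompose $\mathfrak{T}_{\mathfrak{q}}$ into components $F,G,H,K$ and derive the matrix commutator identity $(\bigstar)$. The analogues of Lemmas \ref{xxsec3.5}, \ref{xxsec3.6}, \ref{xxsec3.7}, \ref{xxsec3.8} go through verbatim, because they use only $2$-torsionfreeness and direct manipulation of commutators; in particular, every $f_{ij}$ in the list of Lemma \ref{xxsec3.7}(3) automatically takes values in $\mathcal{R}=\mathcal{Z}(A)$, and the parallel conclusion $k_{11},k_{12},k_{13},k_{22},k_{33}\in\mathcal{Z}(B)=\mathcal{R}\cdot 1_B$ from Lemma \ref{xxsec3.8} still holds. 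The analogue of Lemma \ref{xxsec3.9} actually becomes easier: the relation $(3.17)$ reduces to $(f_{22}(a_2,a_2)-k_{22}(a_2,a_2))a_2=0$ with both scalars already in $\mathcal{R}$, so hypothesis $(2)$ combined with bilinearity yields $f_{22}=k_{22}$ as $\mathcal{R}$-valued functions, giving the desired central diagonal matrix; the same trick handles $f_{33},k_{33}$. Lemmas \ref{xxsec3.10}, \ref{xxsec3.11}, \ref{xxsec3.12} then proceed essentially as written, with appeals to loyalty of $M$ replaced by hypothesis $(2)$ and by the right $B$-faithfulness of $M$ (which is still available) whenever we invoke the Benkovi\v{c}--Eremita cancellation lemma.

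The hard step, and the one I expect to be the main obstacle, is the analogue of Lemma \ref{xxsec3.13}. There the original argument derives $[a_1,b_1]a_2(\cdots)=0$ and concludes by choosing $a_1,b_1\in A$ with $[a_1,b_1]\neq 0$. This route is now unavailable. Instead, I would exploit that $a_1\in\mathcal{R}$ is genuinely central in $\mathcal{G}$, which collapses $(3.32)$ to an identity of the form
$$a_2\bigl(\,\varphi(a_1)\Omega(a_4)-\gamma'(a_1)a_4-\varphi(\delta(a_1,a_4))+k_{14}(a_1,a_4)\bigr)=0$$
with $\Omega(a_4)=(\zeta+\varphi(\gamma(1))-\theta)a_4+\varphi(\delta(1,a_4))-k_{14}(1,a_4)\in B$. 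Since $M$ is faithful as a right $B$-module, the parenthesized expression vanishes in $B$ for all $a_1,a_4$. Setting $a_1=1$ alone yields a relation of the form $\kappa a_4 + \varphi(\delta(1,a_4)) - k_{14}(1,a_4)=0$ for a scalar $\kappa\in\mathcal{R}$, and here I would feed in hypothesis $(3)$: evaluating along $a_4=b_0$ and comparing with $a_4=1$ through the module element $m_0$, the $\mathcal{R}$-linear independence of $m_0$ and $m_0b_0$ separates the $a_4$-coefficient from the residual $\varphi(\delta(1,\cdot))-k_{14}(1,\cdot)$ piece, playing exactly the role that the noncommutator $[a,b]\neq 0$ played in Theorem \ref{xxsec3.4}. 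This pins down $k_{14}(1,a_4)$, and plugging back into the $a_1$-general equation gives the desired form $k_{14}(a_1,a_4)=\gamma'(a_1)a_4+\varphi(\delta(a_1,a_4))$.

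With Lemma \ref{xxsec3.13}'s analogue secured, the closing assembly of the proof of Theorem \ref{xxsec3.4} transfers without change: the scalar $z=[\begin{smallmatrix}\varepsilon&0\\0&\varepsilon'\end{smallmatrix}]$ lies in $\mathcal{Z(G)}$, the map $\mu$ defined as at the end of Theorem \ref{xxsec3.4} sends $\mathcal{G}$ into $\mathcal{Z(G)}$, and the residual $\nu(x)=\mathfrak{T}_{\mathfrak{q}}(x)-zx^2-\mu(x)x$ takes values in $\mathcal{Z(G)}$ after one final application of hypothesis $(2)$ (in place of loyalty) to kill the bilinear defect $f_{23}-\varepsilon a_2a_3$. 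This exhibits $\mathfrak{T}_{\mathfrak{q}}$ in the proper form and completes the proof.
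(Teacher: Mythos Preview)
Your high-level plan is right, but you have misplaced the genuine obstruction, and your proposed patch at Lemma \ref{xxsec3.13} does not work.

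\smallskip

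\textbf{The real gap is at $k_{24}$, not $k_{14}$.} You assert that Lemma \ref{xxsec3.10} ``proceeds essentially as written''. For $f_{12}$ this is fine: since $E(a_1,a_2)\in A_1=\mathcal{R}$, the relation $E(a_1,a_2)a_2=0$ together with hypothesis (2) kills $E$. But the ``similarly'' for $k_{24}$ does \emph{not} transfer. One is led to $a_2E'(a_2,a_4)=0$ with $E'(a_2,a_4)=k_{24}(a_2,a_4)-\varphi(\alpha(a_2))a_4-\varphi(f_{24}(a_2,a_4))\in B$, and the Benkovi\v{c}--Eremita cancellation is not symmetric here: there exist nonzero $\mathcal{R}$-linear $\phi:M\to B$ with $m\phi(m)=0$ for all $m$ (e.g.\ for $\mathcal{G}=\mathcal{M}_3(k)$, $M=k^2$, take $\phi((x,y))=\left[\begin{smallmatrix}-yc&-yd\\xc&xd\end{smallmatrix}\right]$). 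Neither hypothesis (2) nor right-faithfulness of $M$ rules this out. The paper repairs this by first invoking hypothesis (1) to write $k_{24}(a_2,a_4)=\psi(a_2)a_4+\omega(a_2,a_4)$ with $\psi,\omega$ central-valued, and \emph{then} uses hypothesis (3): evaluating (3.39) at $a_2=m_0$, $a_4=b_0$ separates the $a_4$-coefficient from the central remainder via the linear independence of $m_0b_0$ and $m_0$, forcing $\psi=\varphi\circ\alpha$ and $\omega=\varphi\circ f_{24}$.

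\smallskip

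\textbf{Your argument at $k_{14}$ collapses.} In the $A=\mathcal{R}$ setting every term in (3.32) is $\mathcal{R}$-linear in $a_1$: one has $k_{14}(a_1,a_4)=a_1k_{14}(1,a_4)$, $\delta(a_1,a_4)=a_1\delta(1,a_4)$ and $\gamma'(a_1)=a_1\gamma'(1)$. Hence after using right-faithfulness, (3.32) becomes $a_1\bigl(\Omega(a_4)-\gamma'(1)a_4-\varphi(\delta(1,a_4))+k_{14}(1,a_4)\bigr)=0$, and expanding $\Omega$ the $k_{14}(1,a_4)$ and $\varphi(\delta(1,a_4))$ terms cancel identically, leaving only the scalar relation $\zeta+\varphi(\gamma(1))-\theta=\gamma'(1)$. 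You do not obtain ``$\kappa a_4+\varphi(\delta(1,a_4))-k_{14}(1,a_4)=0$'', and hypothesis (3) has nothing to act on here. The paper's remedy is to \emph{reverse} the order of Lemmas \ref{xxsec3.12}--\ref{xxsec3.13}: first decompose $k_{14}(a_1,a_4)=\gamma'(a_1)a_4+\delta'(a_1,a_4)$ using hypothesis (1) on $B$, and then \emph{define} $\gamma(a_4)=f_{14}(1,a_4)-\varphi^{-1}(\delta'(1,a_4))$ and $\delta=\varphi^{-1}\circ\delta'$, checking via (3.29)--(3.31), the noncommutativity of $B$, and hypothesis (2) that $f_{14}(a_1,a_4)=\gamma(a_4)a_1+\delta(a_1,a_4)$.
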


\begin{proof}
For the proof of this lemma, we shall follow the proof of Theorem
\ref{xxsec3.4} step by step and hence use the same notations.
However, we have to make explicit changes in some necessary places.
All changes take place from the Lemma \ref{xxsec3.9} to the end.

{\bf Step 1.} $\left[
\begin{array}
[c]{cc}%
f_{22}(a_2,a_2) & 0\\
0 & k_{22}(a_2,a_2)
\end{array}
\right] \in \mathcal{R}1$ and $\left[
\begin{array}
[c]{cc}%
f_{33}(a_3,a_3) & 0\\
0 & k_{33}(a_3,a_3)
\end{array}
\right] \in \mathcal{R}1$. By $(3.17)$ we know that
$$
\big(f_{22}(a_2,a_2)-\varphi^{-1}(k_{22}(a_2,a_2)) \big)a_2=0
$$
for all $a_2\in A_2=M$. Note that the fact $A_1=\mathcal{R}$ in our
context. Then the assumption $(2)$ deduces that
$f_{22}(a_2,a_2)=\varphi^{-1}(k_{22}(a_2,a_2))$. Using the same
proof of Lemma \ref{xxsec3.9} one easily obtain $\left[
\begin{array}
[c]{cc}%
f_{22}(a_2,a_2) & 0\\
0 & k_{22}(a_2,a_2)
\end{array}
\right] \in \mathcal{R}1$. On the other hand, $\left[
\begin{array}
[c]{cc}%
f_{33}(a_3,a_3) & 0\\
0 & k_{33}(a_3,a_3)
\end{array}
\right] \in \mathcal{R}1$ follows from the second paragraph of the
proof of Lemma \ref{xxsec3.9}.

{\bf Step 2.}
$f_{12}(a_1,a_2)=\alpha(a_2)a_1+\varphi^{-1}(k_{12}(a_1,a_2))$ and
$k_{24}(a_2,a_4)=\varphi(\alpha(a_2))a_4+\varphi (f_{24}(a_2,a_4))$,
where $\alpha(a_2)=f_{12}(1,a_2)-\varphi^{-1}(k_{12}(1,a_2))$. If
only we show that $M$ is loyal as an $(A_1,A_4)$-bimodule, then the
corresponding form of $f_{12}$ can be obtained by copying the proof
of Lemma \ref{xxsec3.10}. Let $rMb=0$ for all $r\in \mathcal{R}$ and
$b\in B$. Suppose that $b\neq 0$. Since $M$ is faithful as a right
$B$-module, there exists a $m\in M$ such that $mb\neq 0$. However
$0=rmb=r(mb)$, the assumption $(2)$ implies that $r=0$. And hence
$M$ is a loyal $(A_1,A_4)$-bimodule.

It is necessary for us to characterize the form of $k_{24}$. By
equations $(3.26)$ and $(3.28)$ we see that
$$
a_2\big(k_{24}(a_2,a_4)-\varphi(\alpha(a_2))a_4-\varphi(f_{24}(a_2,a_4))
\big)=0 \eqno(3.37)
$$
for all $a_i\in A_i$ with $i=1,2,4$. Since $a_4\mapsto
k_{24}(a_2,a_4)$ is a commuting linear mapping on $A_4$, there exist
mappings $\psi: A_2\longrightarrow \mathcal{R}1$ and $\omega:
A_2\times A_4\longrightarrow \mathcal{R}1$ such that
$$k_{24}(a_2,a_4)=\psi(a_2)a_4+\omega(a_2,a_4),$$
where $\omega$ is $\mathcal{R}$-linear in the second argument. Let
us prove that $\psi$ is an $\mathcal{R}$-linear mappings and that
$\omega$ is an $\mathcal{R}$-bilinear mapping. It is straightforward
to check that
$$
k_{24}(a_2+b_2,a_4)=\psi(a_2+b_2)a_4+\omega(a_2+b_2,a_4)
$$
and
$$
k_{24}(a_2,a_4)+k_{24}(b_2,a_4)=\psi(a_2)a_4+\omega(a_2,a_4)+\psi(b_2)a_4+\omega(b_2,a_4).
$$
for all $a_2,b_2\in A_2$ and $a_4\in A_4$. Therefore
$$
\big(\psi(a_2+b_2)-\psi(a_2)-\psi(b_2)\big)a_4+\omega(a_2+b_2,a_4)-\omega(a_2,a_4)-\omega(b_2,a_4)=0
\eqno(3.38)
$$
for all $a_2,b_2\in A_2$ and $a_4\in A_4$. Note that both $\psi$ and
$\omega$ map into ${\mathcal Z}(A_4)$. Commuting $(3.38)$ with
$b_4\in A_4$ we get
$$
(\psi(a_2+b_2)-\psi(a_2)-\psi(b_2))[a_4,b_4]=0
$$
for all $a_2, b_2\in A_2$ and $a_4,b_4\in A_4$. Let us choose
$a_4,b_4\in A_4$ such that $[a_4,b_4]\neq 0$. Since $M$ is faithful
as a right $A_4$-module, there exists $m\in M$ such that
$m[a_4,b_4]\neq 0$. Thus
$$
\varphi^{-1}\big(\psi(a_2+b_2)-\psi(a_2)-\psi(b_2) \big)m[a_4,b_4]=0
$$
for all $a_2,b_2\in A_2$. The assumption $(2)$ implies that $\psi$
is an $\mathcal{R}$-linear mapping. Consequently, $\omega$ is
$\mathcal{R}$-linear in the first argument. Rewrite $(3.37)$ as
$$
a_2\big((\psi(a_2)-\varphi(\alpha(a_2)))a_4+\omega(a_2,a_4)-\varphi(f_{24}(a_2,a_4))
\big)=0 \eqno(3.39)
$$
for all $a_2\in A_2$ and $a_4\in A_4$. Setting $a_2=m_0$ and $a_4=b_0$ we obtain
$$
\big(\varphi^{-1}(\psi(m_0))-\alpha(m_0) \big)m_0b_0+\big(\varphi^{-1}(\omega(m_0,b_0))-f_{24}(m_0,b_0) \big)m_0=0.
$$
So $\alpha(m_0)=\varphi^{-1}(\psi(m_0))$ and
$f_{24}(m_0,b_0)=\varphi^{-1}(\omega(m_0,b_0))$ by the condition
$(3)$. Substituting $a_2+m_0$ for $a_2$ and $b_0$ for $a_4$ in
$(3.39)$ yields
$$
\big(\varphi^{-1}(\psi(a_2))-\alpha(a_2) \big)m_0b_0+\big(\varphi^{-1}(\omega(a_2,b_0))-f_{24}(a_2,b_0) \big)m_0=0.
$$
Therefore $\alpha(a_2)=\varphi^{-1}(\psi(a_2))$ for all $a_2\in
A_2$. Then it follows from $(3.39)$ that
$\omega(a_2,a_4)=\varphi(f_{24}(a_2,a_4))$ for all $a_2\in A_2,
a_4\in A_4$. Hence $k_{24}$ has also the desired form.

Since $M$ is loyal, we only need to change the places in the proof
of Theorem \ref{xxsec3.4}, where the noncommutativity of $A$ is
involved. However, the proof of Lemma \ref{xxsec3.11} does not
involve the noncommutativity of $A$ and hence it still works in our
context.

{\bf Step 3.} $f_{14}$ (resp. $k_{14}$) is of the form as in Lemma
\ref{xxsec3.12} (resp. Lemma \ref{xxsec3.13}). Note that $a_4\mapsto
k_{14}(a_1,a_4)$ is a commuting $\mathcal{R}$-linear mapping on
$A_4$. Then there exist mappings $\gamma': A_1\rightarrow
\mathcal{R}1_B$ and $\delta': A_1\times A_4\rightarrow
\mathcal{R}1_B$ such that
$$
k_{14}(a_1,a_4)=\gamma'(a_1)a_4+\delta'(a_1,a_4), \eqno(3.40)
$$
where $\delta'$ is $\mathcal{R}$-linear in the second argument. Here
we denote $1_B$ the identity of $B$ to avoid confusion in the
following discussion. We assert that $\gamma'$ is an
$\mathcal{R}$-linear mapping and $\delta'$ is an
$\mathcal{R}$-bilinear mapping. In fact,
$k_{14}(1,a_4)=\gamma'(1)a_4+\delta'(1,a_4)$ and hence
$k_{14}(a_1,a_4)=a_1\gamma'(1)a_4+a_1\delta'(1,a_4)$. Therefore
$$
(\gamma'(a_1)-a_1\gamma'(1))a_4+\delta'(a_1,a_4)-a_1\delta'(1,a_4)=0
\eqno(3.41)
$$
for all $a_1\in \mathcal{R}$, $a_4\in A_4$. Commuting $(3.41)$ with
$b_4\in A_4$ we obtain
$$
(\gamma'(a_1)-a_1\gamma'(1))[a_4,b_4]=0
$$
for all $a_1\in \mathcal{R}$, $a_4,b_4\in A_4$. Moreover,
$$
\varphi^{-1}(\gamma'(a_1)-a_1\gamma'(1))m[a_4,b_4]=0
$$
for all $a_1\in \mathcal{R}$, $a_4,b_4\in A_4$ and $m\in M$. Since
$M$ is loyal and $B$ is noncommutative, we have
$\gamma'(a_1)=a_1\gamma'(1)$. This implies that $\gamma^\prime$ is
$\mathcal{R}$-linear and hence $\delta'$ is $\mathcal{R}$-bilinear.

It would be helpful to point out here that each of the mappings
$f_{ij}$ takes its values in $\mathcal{R}$. Now the identities
$(3.29), (3.30)$ and $(3.31)$ jointly yield that
$$
f_{14}(a_1,a_4)a_2+\big(\theta a_1+\varphi^{-1}(k_{14}(a_1,1_B))-f_{14}(a_1,1_B) \big)a_2a_4
$$
$$
=a_2k_{14}(a_1,a_4)+a_1a_2\big(\eta a_4+\varphi(f_{14}(1,a_4))-k_{14}(1,a_4) \big)
$$
and hence (taking into account the relation $(3.40)$)
$$
a_2\left\{\varphi\big(a_1\varphi^{-1}(\eta)+\varphi^{-1}(\gamma'(a_1-1)-k_{14}(a_1,1_B))-\theta a_1+f_{14}(a_1,1_B) \big)a_4 \right.
$$
$$
+\left.\varphi\big((f_{14}(1,a_4)-\varphi^{-1}(\delta'(1,a_4)))a_1+\varphi^{-1}(\delta'(a_1,a_4))-f_{14}(a_1,a_4)
\big)\right\}=0 \eqno(3.42)
$$
for all $a_i\in A_i$ with $i=1,2,4$. Let us choose $a_4,b_4\in A_4$
such that $[a_4,b_4]\neq 0$. Then the fact $A_2$ is faithful as a
right $A_4$-module and the relation $(3.42)$ deduce that
$$
\varphi\big(a_1\varphi^{-1}(\eta)+\varphi^{-1}(\gamma'(a_1-1)-k_{14}(a_1,1_B))-\theta
a_1+f_{14}(a_1,1_B) \big)[a_4,b_4]=0.
$$
for all $a_1\in A_1$. Thus
$$
\big(a_1\varphi^{-1}(\eta)+\varphi^{-1}(\gamma'(a_1-1)-k_{14}(a_1,1_B))-\theta a_1+f_{14}(a_1,1_B) \big)M[a_4,b_4]=0
$$
for all $a_1\in A_1$. Since $M$ is faithful as a right $B$-module,
there exists a $m\in M$ such that $m[a_4,b_4]\neq 0$. Therefore the
condition $(2)$ implies that
$$
a_1\varphi^{-1}(\eta)+\varphi^{-1}(\gamma'(a_1-1)-k_{14}(a_1,1_B))-\theta a_1+f_{14}(a_1,1_B)=0
$$
for all $a_1\in A_1$. Then the relation $(3.42)$ shows
$$
(f_{14}(1,a_4)-\varphi^{-1}(\delta'(1,a_4)))a_1+\varphi^{-1}(\delta'(a_1,a_4))=f_{14}(a_1,a_4)
$$
for all $a_1\in A_1, a_4\in A_4$. Let us
$\gamma(a_4):=f_{14}(1,a_4)-\varphi^{-1}(\delta'(1,a_4))$ and
$\delta(a_1,a_4):=\varphi^{-1}(\delta'(a_1,a_4))$. Then
$f_{14}(a_1,a_4)=\gamma(a_4)a_1+\delta(a_1,a_4)$ and
$k_{14}(a_1,a_4)=\gamma'(a_1)a_4+\varphi(\delta(a_1,a_4))$

Finally, following the rest part of the proof of Theorem
\ref{xxsec3.4} we can obtain the required result.
\end{proof}

\begin{corollary}\label{xxsec3.18}
Let $\mathcal{R}$ be a $2$-torsionfree commutative domain and
${\mathcal M}_n(\mathcal{R})$ be the full matrix algebra over
$\mathcal{R}$. Suppose that ${\mathfrak q}\colon  {\mathcal
M}_n(\mathcal{R})\times {\mathcal M}_n(\mathcal{R})\longrightarrow
{\mathcal M}_n(\mathcal{R})$ is an $\mathcal{R}$-bilinear mapping.
Then every commuting trace ${\mathfrak T}_{\mathfrak q}\colon
{\mathcal M}_n(\mathcal{R})\longrightarrow {\mathcal
M}_n(\mathcal{R})$ of $\mathfrak{q}$ is proper.
\end{corollary}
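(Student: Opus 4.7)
The plan is to apply Proposition~\ref{xxsec3.17} after realizing $\mathcal{M}_n(\mathcal{R})$ as a generalized matrix algebra via Peirce decomposition. The case $n = 1$ is vacuous because $\mathcal{M}_1(\mathcal{R}) = \mathcal{R}$ is commutative, so every trace is trivially proper. For $n \geq 3$ I would take the idempotent $e = e_{11}$ and set $A = e\mathcal{M}_n(\mathcal{R})e \cong \mathcal{R}$, $B = (1-e)\mathcal{M}_n(\mathcal{R})(1-e) \cong \mathcal{M}_{n-1}(\mathcal{R})$, together with $M = e\mathcal{M}_n(\mathcal{R})(1-e)$ and $N = (1-e)\mathcal{M}_n(\mathcal{R})e$, both of which are free $\mathcal{R}$-modules of rank $n-1$. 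This produces the identification
$$
\mathcal{M}_n(\mathcal{R}) \cong \left[\begin{smallmatrix} \mathcal{R} & M \\ N & \mathcal{M}_{n-1}(\mathcal{R}) \end{smallmatrix}\right]
$$
as generalized matrix algebras, putting the hypotheses of Proposition~\ref{xxsec3.17} within reach.

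The next step is to verify each of the three conditions of Proposition~\ref{xxsec3.17} in turn. Noncommutativity of $B = \mathcal{M}_{n-1}(\mathcal{R})$ is automatic for $n \geq 3$, and both $\mathcal{M}_n(\mathcal{R})$ and $\mathcal{M}_{n-1}(\mathcal{R})$ are central over $\mathcal{R}$ since the center of a full matrix algebra over a commutative ring consists of scalar matrices. Condition~(1), that every commuting $\mathcal{R}$-linear mapping of $\mathcal{M}_{n-1}(\mathcal{R})$ is proper, is the classical (linear) special case of the commuting trace theorem on prime rings and can be cited from Bre\v{s}ar's work. Condition~(2) is immediate: if $rm = 0$ for $r \in \mathcal{R}$ and $m \in M$, then since $M$ is a free $\mathcal{R}$-module and $\mathcal{R}$ is a domain, either $r = 0$ or $m = 0$. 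For condition~(3) I would simply exhibit $m_0 = e_{12}$ and $b_0 = e_{23}$, whence $m_0 b_0 = e_{13}$ is $\mathcal{R}$-linearly independent from $m_0 = e_{12}$. Proposition~\ref{xxsec3.17} then delivers that $\mathfrak{T}_{\mathfrak{q}}$ is proper.

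The only genuine subtlety is the case $n = 2$, where the Peirce decomposition with respect to any rank-one idempotent forces $B \cong \mathcal{R}$ to be commutative and condition~(3) of Proposition~\ref{xxsec3.17} to fail. I would dispose of this case by invoking the theorem of Lee--Wong--Lin--Wang \cite{LeeWongLinWang}, which asserts that every commuting trace of a multilinear mapping on a prime ring has the proper form; since $\mathcal{R}$ is a commutative domain, $\mathcal{M}_2(\mathcal{R})$ is prime, and the result applies to the bilinear map $\mathfrak{q}$. The main difficulty of the corollary is therefore housed inside Proposition~\ref{xxsec3.17} itself: once that proposition and the $n=2$ workaround are in hand, the remainder of the proof is a routine verification of structural facts about full matrix algebras.
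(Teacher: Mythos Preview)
Your argument for $n\geq 3$ is correct and in fact slightly more economical than the paper's: the paper splits off $n>3$ and applies Theorem~\ref{xxsec3.4} to the decomposition
\[
\mathcal{M}_n(\mathcal{R})=\left[\begin{smallmatrix}\mathcal{M}_2(\mathcal{R}) & \mathcal{M}_{2\times(n-2)}(\mathcal{R})\\ \mathcal{M}_{(n-2)\times 2}(\mathcal{R}) & \mathcal{M}_{n-2}(\mathcal{R})\end{smallmatrix}\right],
\]
reserving Proposition~\ref{xxsec3.17} only for $n=3$, whereas you use the $1\times(n-1)$ Peirce decomposition and Proposition~\ref{xxsec3.17} uniformly. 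Both routes work; yours avoids the case split at the cost of leaning on the more specialized proposition throughout, while the paper's version keeps Theorem~\ref{xxsec3.4} in play wherever both corners can be made noncommutative. For condition~(1) the paper cites \cite[Corollary~4.1]{XiaoWei1} rather than Bre\v{s}ar directly, but this is cosmetic.

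The one point to watch is your treatment of $n=2$. The paper invokes \cite[Theorem~3.1]{BresarSemrl}, the ``revisited'' result that was written precisely to remove the low-degree obstructions in earlier work. The theorem of Lee--Wong--Lin--Wang \cite{LeeWongLinWang} on multiadditive traces carries a hypothesis on the degree of the prime ring (roughly, that it not satisfy a standard identity of small degree), and $\mathcal{M}_2(\mathcal{R})$ satisfies $S_4$, so it sits exactly in the excluded range for a bilinear trace. You should either verify that \cite{LeeWongLinWang} genuinely covers $\mathcal{M}_2$ over a $2$-torsionfree domain, or---safer---replace that citation with \cite{BresarSemrl} as the paper does.
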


\begin{proof}
If $n>3$, then ${\mathcal M}_n(\mathcal{R})=\left[
\begin{array}
[c]{cc}%
{\mathcal
M}_2(\mathcal{R}) & {\mathcal M}_{2\times (n-2)}(\mathcal{R})\\
{\mathcal M}_{(n-2)\times 2}(\mathcal{R}) & {\mathcal
M}_{n-2}(\mathcal{R})\\
\end{array}
\right]$. By \cite[Corollary 4.1]{XiaoWei1} we know that each
commuting linear mapping on ${\mathcal M}_2(\mathcal{R})$ and
${\mathcal M}_{n-2}(\mathcal{R})$ is proper. The assumptions $(2)$
and $(3)$ in Theorem \ref{xxsec3.4} clearly holds for ${\mathcal
M}_n(\mathcal{R})$ $(n>3)$. Applying Theorem \ref{xxsec3.4} yields
the desired conclusion.

If $n=3$, then ${\mathcal M}_3(\mathcal{R})=\left[
\begin{array}
[c]{cc}%
{\mathcal
 R} & {\mathcal M}_{1\times 2}(\mathcal{R})\\
{\mathcal M}_{2\times 1}(\mathcal{R}) & {\mathcal
M}_2(\mathcal{R})\\
\end{array}
\right]$. Therefore there exist elements
$$
m_0=\left[1,0\right]\in {\mathcal M}_{1\times 2}(\mathcal{R}) \quad
\text{and} \quad b_0=\left[
\begin{array}
[c]{cc}%
0 & 1\\
0 & 0\\
\end{array}
\right]\in {\mathcal M}_{2}(\mathcal{R})
$$
such that $m_0b_0$ and $m_0$ are linearly independent over
$\mathcal{R}$. By \cite[Corollary 4.1]{XiaoWei1} and Proposition
\ref{xxsec3.17} we conclude that ${\mathfrak T}_{\mathfrak q}$ has
the proper form.

If $n=2$, the result follows from \cite[Theorem 3.1]{BresarSemrl}.

Finally, if $n=1$, the conclusion is obvious.
\end{proof}

\begin{corollary}\label{xxsec3.19}
Let $\mathcal{R}$ be a $2$-torsionfree commutative domain, $V$ be an
$\mathcal{R}$-linear space and $B(\mathcal{R}, V, \gamma)$ be the
inflated algebra of $\mathcal{R}$ along $V$. Suppose that
${\mathfrak q}\colon  B(\mathcal{R}, V, \gamma)$ $\times
B(\mathcal{R}, V, \gamma)\longrightarrow B(\mathcal{R}, V, \gamma)$
is an $\mathcal{R}$-bilinear mapping. Then every commuting trace
${\mathfrak T}_{\mathfrak q}\colon B(\mathcal{R}, V,
\gamma)\longrightarrow B(\mathcal{R}, V, \gamma)$ of $\mathfrak{q}$
is proper.
\end{corollary}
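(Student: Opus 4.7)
The plan is to realize $B(\mathcal{R}, V, \gamma)$ as a $2\times 2$ generalized matrix algebra and then invoke either Theorem \ref{xxsec3.4} or Proposition \ref{xxsec3.17}, following exactly the strategy used for full matrix algebras in Corollary \ref{xxsec3.18}. The explicit presentation of an inflated algebra as such a generalized matrix algebra has already been recorded in \cite{LiWei, XiaoWei1}; there the two corners turn out to be matrix algebras over $\mathcal{R}$ (one of which may degenerate to $\mathcal{R}$ itself in small cases), while the bimodule pieces are built from $V$ using the structure form $\gamma$.

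First I would dispose of the trivial cases in which $V=0$ or the inflation is forced to be commutative; in those cases every element is central, so every commuting trace is automatically proper via its $\nu$-component. Next, in the generic case where $B(\mathcal{R}, V, \gamma)$ presents as a generalized matrix algebra with both corners of size at least $2$, I would verify the three hypotheses of Theorem \ref{xxsec3.4}: (1) commuting linear mappings on each matrix corner are proper by \cite[Corollary 4.1]{XiaoWei1}; (2) the center of each matrix corner consists of scalar matrices, and these lift diagonally into $\mathcal{Z(G)}$, giving $\pi_A(\mathcal{Z(G)})=\mathcal{Z}(A)\neq A$ and likewise for $B$; (3) loyalty of the bimodule piece follows from $\mathcal{R}$ being a commutative domain together with the concrete matrix-style action of the corners on $V$.

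In the remaining borderline case in which one corner collapses to $\mathcal{R}$, Theorem \ref{xxsec3.4} fails on condition (2) on that side, so I would switch to Proposition \ref{xxsec3.17}. Condition (1) of the proposition is again furnished by \cite[Corollary 4.1]{XiaoWei1} on the non-degenerate corner; condition (2), that $rm=0$ forces $r=0$ or $m=0$, is automatic because $\mathcal{R}$ is a $2$-torsionfree commutative domain and the bimodule piece is $\mathcal{R}$-torsionfree; and condition (3) is realized by picking a matrix unit $b_0$ in the non-degenerate corner that translates a suitable $m_0$ into an element $\mathcal{R}$-independent from $m_0$, which is always possible once that corner has size at least $2$.

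The main obstacle I anticipate is not any individual verification but rather the bookkeeping required to decide, case by case, which of Theorem \ref{xxsec3.4} or Proposition \ref{xxsec3.17} applies to a given inflated-algebra presentation drawn from \cite{LiWei, XiaoWei1}. Once that classification is fixed, every step above is routine, and the isomorphism $\varphi$ between the two centers reduces in every case to the identification of scalar matrices in either corner with $\mathcal{R}$, which causes no friction.
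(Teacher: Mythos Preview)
The paper gives no proof for Corollary \ref{xxsec3.19}; it is stated immediately after Corollary \ref{xxsec3.18} and is evidently meant to follow by the same mechanism, using the generalized matrix algebra presentation of $B(\mathcal{R},V,\gamma)$ recorded in \cite{LiWei, XiaoWei1}. Your plan---split into the case where both corners are noncommutative matrix blocks (Theorem \ref{xxsec3.4}) and the case where one corner degenerates to $\mathcal{R}$ (Proposition \ref{xxsec3.17}), with trivial commutative cases handled separately---is exactly the template of Corollary \ref{xxsec3.18} and is the intended argument.
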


Let us see the commuting traces of bilinear mappings of several
unital algebras with nontrivial idempotents.

\begin{corollary}\label{xxsec3.20}
Let $\mathcal{A}$ be a $2$-torsionfree unital prime algebra over a
commutative ring $\mathcal{R}$. Suppose that $\mathcal{A}$ contains
a nontrivial idempotent $e$ and that $f=1-e$. If
$e\mathcal{Z(A)}e=\mathcal{Z}(e\mathcal{A}e)\neq e\mathcal{A}e$ and
$f\mathcal{Z(A)}f=\mathcal{Z}(f\mathcal{A}f)\neq f\mathcal{A}f$,
then every commuting trace of an arbitrary bilinear mappings on
$\mathcal{A}$ is proper.
\end{corollary}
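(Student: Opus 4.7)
The plan is to realize $\mathcal{A}$ as a generalized matrix algebra via its Peirce decomposition with respect to $e$ and $f=1-e$, and then invoke Theorem \ref{xxsec3.4}. I would set $A := e\mathcal{A}e$, $B := f\mathcal{A}f$, $M := e\mathcal{A}f$ and $N := f\mathcal{A}e$; with the pairings induced by multiplication in $\mathcal{A}$, this data forms a Morita context, and there is a natural identification $\mathcal{A} \cong \mathcal{G} := \left[\smallmatrix A & M \\ N & B \endsmallmatrix\right]$ of $\mathcal{R}$-algebras. Note $\mathcal{G}$ is $2$-torsionfree since $\mathcal{A}$ is.

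The next step is to verify the three hypotheses of Theorem \ref{xxsec3.4}, together with the standing requirement that $M$ be faithful as a left $A$-module and right $B$-module. For any $a\in A$ and $b\in B$ one has
$$
a\cdot e\mathcal{A}f\cdot b = ae\mathcal{A}fb = a\mathcal{A}b,
$$
using $ae=a$ and $fb=b$. Primeness of $\mathcal{A}$ then forces $a=0$ or $b=0$, which gives loyalty (hypothesis (3)); faithfulness on each side follows by specializing $b=f$ or $a=e$.

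For hypothesis (2), any central element $z\in\mathcal{Z(A)}$ commutes with $e$, which implies $ezf=0=fze$, so the Peirce decomposition reduces to $z=eze+fzf$. Consequently $\pi_A(\mathcal{Z(G)}) = e\mathcal{Z(A)}e$ and $\pi_B(\mathcal{Z(G)}) = f\mathcal{Z(A)}f$, and the hypotheses of the corollary translate directly into $\pi_A(\mathcal{Z(G)}) = \mathcal{Z}(A) \neq A$ and $\pi_B(\mathcal{Z(G)}) = \mathcal{Z}(B) \neq B$. For hypothesis (1), both corners $A$ and $B$ are prime (as corners of a prime algebra); by Bre\v{s}ar's classical description of commuting linear maps on prime rings, every such map has the form $x\mapsto\lambda x+\mu(x)$ with $\lambda$ and $\mu(x)$ in the extended centroid, and the hypotheses $e\mathcal{Z(A)}e=\mathcal{Z}(A)$ and $f\mathcal{Z(A)}f=\mathcal{Z}(B)$ are precisely what is needed to pin these scalars down inside the ordinary centers, yielding properness in the sense of this paper. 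An application of Theorem \ref{xxsec3.4} then completes the proof.

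The main obstacle is the bookkeeping for hypothesis (1): one must carefully translate the prime-ring commuting-map theorem, which naturally produces coefficients in the extended centroid, into the notion of proper trace employed here, which demands coefficients in the honest center. The center hypotheses of the corollary are tailor-made to bridge this gap, and once this identification is secured, the remaining verifications are essentially formal consequences of primeness and the Peirce decomposition.
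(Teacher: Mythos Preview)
Your approach is essentially the same as the paper's: realize $\mathcal{A}$ via the Peirce decomposition as a generalized matrix algebra, use primeness of $\mathcal{A}$ to verify that $e\mathcal{A}f$ is loyal (and faithful), invoke Bre\v{s}ar's description of commuting maps on prime rings for hypothesis~(1), and apply Theorem~\ref{xxsec3.4}. The paper's proof is terser---it checks loyalty and cites \cite[Theorem~3.2]{Bresar2} for (1), leaving hypothesis~(2) to be read off directly from the corollary's assumptions---while you spell out the identification $\pi_A(\mathcal{Z(G)})=e\mathcal{Z(A)}e$ explicitly.

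One caution: your final paragraph asserts that the center hypotheses $e\mathcal{Z(A)}e=\mathcal{Z}(e\mathcal{A}e)$ and $f\mathcal{Z(A)}f=\mathcal{Z}(f\mathcal{A}f)$ are ``precisely what is needed'' to force the extended-centroid coefficients from Bre\v{s}ar's theorem into the ordinary centers. That is not quite what those hypotheses say---they relate the center of the full algebra to the centers of the corners, not the centers of the corners to their extended centroids. The paper sidesteps this entirely by citing Bre\v{s}ar's result without comment, so your treatment is no worse than the original; but if you want to make this step airtight you should argue directly (e.g., from $f(x)\in e\mathcal{A}e$ for all $x$ and the unit $e$) that the scalar $\lambda$ and the values $\mu(x)$ actually land in $e\mathcal{A}e$, rather than appealing to the center hypotheses of the corollary.
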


\begin{proof}
Let us write $\mathcal{A}$ as a natural generalized matrix algebra
$\left[
\begin{array}
[c]{cc}%
e\mathcal{A}e & e\mathcal{A}f\\
f\mathcal{A}e & f\mathcal{A}f\\
\end{array}
\right]$. It is clear that $e\mathcal{A}e$ and $f\mathcal{A}f$ are
prime algebras. By \cite[Theorem 3.2]{Bresar2} it follows that each
commuting additive mapping on $e\mathcal{A}e$ and $f\mathcal{A}f$ is
proper. On the other hand, if $(eae)e\mathcal{A}f(fbf)=0$ holds for
all $a,b\in \mathcal{A}$, then the primeness of $\mathcal{A}$
implies that $eae=0$ or $fbf=0$. This shows $e\mathcal{A}f$ is a
loyal $(e\mathcal{A}e, f\mathcal{A}f)$-bimodule. Applying Theorem
\ref{xxsec3.4} yields that each commuting trace of an arbitrary
bilinear mappings on $\mathcal{A}$ is proper.
\end{proof}

\begin{corollary}\label{xxsec3.21}
Let $X$ be a Banach space over the real or complex field
$\mathbb{F}$, $\mathcal{B}(X)$ be the algebra of all bounded linear
operators on $X$. Then every commuting trace of an arbitrary
bilinear mapping on $\mathcal{B}(X)$ is proper.
\end{corollary}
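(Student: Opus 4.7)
The plan is to apply Corollary \ref{xxsec3.20} to $\mathcal{A}=\mathcal{B}(X)$, after first disposing of the finite-dimensional case. If $\dim X = n < \infty$, then $\mathcal{B}(X) \cong \mathcal{M}_n(\mathbb{F})$, and the conclusion is immediate from Corollary \ref{xxsec3.18}. So I may assume $X$ is infinite-dimensional.

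First I would record three standard facts: $\mathcal{B}(X)$ is a prime $\mathbb{F}$-algebra (if $A,B\in\mathcal{B}(X)$ are nonzero, one can build an explicit rank-one operator $T=\varphi(\cdot)y$ with $ATB\neq 0$), $\mathcal{B}(X)$ is $2$-torsionfree since $\mathbb{F}\in\{\mathbb{R},\mathbb{C}\}$ has characteristic zero, and $\mathcal{Z}(\mathcal{B}(X)) = \mathbb{F}I$.

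Next I would exhibit a nontrivial idempotent $e\in\mathcal{B}(X)$ whose corners are both noncommutative. Pick any two-dimensional subspace $Y\subset X$; finite-dimensional subspaces of Banach spaces are always complemented, so there exists a closed complement $Z$, and $\dim Z\geq 2$ since $X$ is infinite-dimensional. Let $e$ be the bounded projection of $X$ onto $Y$ along $Z$, and set $f=1-e$. Under the canonical identifications $e\mathcal{B}(X)e\cong\mathcal{B}(Y)$ and $f\mathcal{B}(X)f\cong\mathcal{B}(Z)$, both corners are noncommutative prime algebras with centers $\mathbb{F}e$ and $\mathbb{F}f$, respectively. These centers coincide with $e\mathcal{Z}(\mathcal{B}(X))e$ and $f\mathcal{Z}(\mathcal{B}(X))f$, and each is a proper subalgebra of the corresponding corner since $\dim Y,\dim Z \geq 2$. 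Thus all hypotheses of Corollary \ref{xxsec3.20} are satisfied, and we conclude that every commuting trace of an arbitrary bilinear mapping on $\mathcal{B}(X)$ is proper.

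The only potential obstacle is ensuring the existence of a suitable nontrivial idempotent producing two noncommutative corners with the correct center conditions; this is bypassed by selecting any two-dimensional subspace of $X$, which is automatically complemented, and by invoking Corollary \ref{xxsec3.18} when $X$ is finite-dimensional.
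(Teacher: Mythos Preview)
Your proposal is correct and follows essentially the same approach as the paper's proof: split into the finite-dimensional case (handled by Corollary~\ref{xxsec3.18}) and the infinite-dimensional case (handled by Corollary~\ref{xxsec3.20}). The paper's version is terser---it simply notes that $\mathcal{B}(X)$ is a centrally closed prime algebra and invokes Corollary~\ref{xxsec3.20} without specifying the idempotent---whereas you explicitly construct a projection onto a two-dimensional complemented subspace and verify the center conditions on both corners, which is a welcome level of detail.
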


\begin{proof}
Note that $\mathcal{B}(X)$ is a centrally closed prime algebra. If
$X$ is infinite dimensional, the result follows from Corollary
\ref{xxsec3.20}. If $X$ is of dimension $n$, then $\mathcal{B}(X)=
{\mathcal M}_n(\mathbb{F})$. In this case the result follows from
Corollary \ref{xxsec3.18}.
\end{proof}

\section{Lie Isomorphisms on Generalized Matrix Algebras}\label{xxsec4}

In this section we shall use the main result in Section \ref{xxsec3}
(Theorem \ref{xxsec3.4}) to describe the form of an arbitrary Lie
isomorphism of a certain class of generalized matrix algebras
(Theorem \ref{xxsec4.3}). As applications of Theorem \ref{xxsec4.3},
we characterize Lie isomorphisms of certain generalized matrix
algebras. The involved algebras include upper triangular matrix
algebras, nest algebras, full matrix algebras, inflated algebras,
prime algebras with nontrivial idempotents.

Throughout this section, we denote the generalized matrix algebra of
order $2$ originated from the Morita context $(A, B, _AM_B, _BN_A,
\Phi_{MN}, \Psi_{NM})$ by
$$
\mathcal{G}=\left[
\begin{array}
[c]{cc}%
A & M\\
N & B
\end{array}
\right] ,
$$
where at least one of the two bimodules $M$ and $N$ is distinct from
zero. We always assume that $M$ is faithful as a left $A$-module and
also as a right $B$-module, but no any constraint conditions on $N$.

\begin{lemma}\label{xxsec4.1}
Let $\mathcal{G}=\left[
\begin{array}
[c]{cc}%
A & M\\
N & B\\
\end{array}
\right]$ be a $2$-torsionfree generalized matrix algebra over a
commutative ring $\mathcal{R}$. Then $\mathcal{G}$ satisfies the
polynomial identity $[[x^2,y],[x,y]]$ if and only if both $A$ and
$B$ are commutative.
\end{lemma}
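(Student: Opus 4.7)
The statement is an equivalence; I sketch both implications.

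\textit{Sufficiency.} Assume $A$ and $B$ are commutative. In any associative ring one has
$$[x^2, y] = x[x, y] + [x, y] x,$$
and setting $Z = [x, y]$, a short computation $[xZ + Zx, Z] = xZ^2 - ZxZ + ZxZ - Z^2 x = xZ^2 - Z^2 x$ yields the universal identity
$$[[x^2, y], [x, y]] = [x, [x, y]^2].$$
It therefore suffices to show $[x, y]^2 \in \mathcal{Z(G)}$. Writing
$$[x, y] = \left[\begin{smallmatrix} u & p \\ q & v \end{smallmatrix}\right]$$
with $u = mn' - m'n \in A$, $v = nm' - n'm \in B$, $p \in M$, $q \in N$, I would square in block form and invoke the Morita associativity axioms $\Phi_{MN}(m, n) \cdot m'' = m \cdot \Psi_{NM}(n, m'')$ and $\Psi_{NM}(n, m) \cdot n'' = n \cdot \Phi_{MN}(m, n'')$, together with the commutativity of $A$ and $B$, to check that the off-diagonal entries $up + pv$ and $qu + vq$ of $[x, y]^2$ vanish and that the diagonal entries satisfy the compatibility relations $(u^2 + pq) m'' = m'' (v^2 + qp)$ and $n''(u^2 + pq) = (v^2 + qp) n''$ required by the description of $\mathcal{Z(G)}$ recalled in Section~\ref{xxsec3}.

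\textit{Necessity.} Assume $[[x^2, y], [x, y]] = 0$ on $\mathcal{G}$. Specialize to the pair
$$x = \left[\begin{smallmatrix} a & 0 \\ 0 & 0 \end{smallmatrix}\right], \qquad y = \left[\begin{smallmatrix} a' & m \\ 0 & 0 \end{smallmatrix}\right]$$
with $a, a' \in A$ and $m \in M$ arbitrary. Both $[x^2, y]$ and $[x, y]$ are of the form $\left[\begin{smallmatrix} * & * \\ 0 & 0 \end{smallmatrix}\right]$; using the Leibniz identity $[a^2, a'] = a[a, a'] + [a, a'] a$, the upper-right entry of $[[x^2, y], [x, y]]$ collapses to $a [a, a'] a \cdot m$. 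Vanishing for every $m \in M$, combined with the faithfulness of $M$ as a left $A$-module, forces
$$a [a, a'] a = 0 \quad \text{for all } a, a' \in A.$$
Linearizing by $a \mapsto a \pm 1$ (and noting $[a \pm 1, a'] = [a, a']$), the two resulting expansions combine to yield $2 [a, a'] = 0$, and the $2$-torsionfreeness of $\mathcal{G}$ then gives $[a, a'] = 0$, so $A$ is commutative. A symmetric test with
$$x = \left[\begin{smallmatrix} 0 & 0 \\ 0 & b \end{smallmatrix}\right], \qquad y = \left[\begin{smallmatrix} 0 & m \\ 0 & b' \end{smallmatrix}\right],$$
using the faithfulness of $M$ as a right $B$-module, yields the commutativity of $B$.

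\textit{Main obstacle.} The sufficiency direction carries the bulk of the computational load: verifying $[x, y]^2 \in \mathcal{Z(G)}$ is a notationally heavy but routine bookkeeping exercise requiring multiple applications of the Morita associativity axioms, the off-diagonal cancellations being the places where commutativity of $A$ and $B$ is essentially used. The necessity direction is conceptually straightforward once the right test elements are chosen.
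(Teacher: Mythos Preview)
Your necessity argument is correct and is precisely the specialization-and-linearization argument the paper has in mind: the paper merely cites \cite[Lemma~2.7]{BenkovicEremita} for this direction, and your choice of test elements together with the faithfulness of $M$ reproduces that proof in the present setting.

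For sufficiency the paper offers nothing beyond ``a direct but rigorous procedure,'' so your outline is already more explicit than what appears there. The reduction $[[x^{2},y],[x,y]]=[x,[x,y]^{2}]$ is valid and a good first step. The point to watch is your plan to finish via the \emph{stronger} claim $[x,y]^{2}\in\mathcal{Z(G)}$ (the Hall identity). Writing $[x,y]=\left[\begin{smallmatrix}u&p\\q&v\end{smallmatrix}\right]$ with $u=mn'-m'n$, $v=nm'-n'm$ and carrying out the block computation, one finds
\[
up+pv \;=\; a\sigma - \sigma b + \tau b' - a'\tau,\qquad \sigma=(mn')m'-(m'n')m,\quad \tau=(mn)m'-(m'n)m,
\]
so the vanishing of this off-diagonal entry for \emph{all} $a,a',b,b'$ forces $\sigma=\tau=0$, i.e.\ the identity $(m\tilde n)m'=(m'\tilde n)m$ for all $m,m'\in M$, $\tilde n\in N$. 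This is \emph{not} the Morita associativity $(m\tilde n)m'=m(\tilde n m')$, and it does not fall out of commutativity of $A$ and $B$ by inspection; it is the crux of the verification, not ``routine bookkeeping.'' In the triangular case $N=0$ the issue evaporates because then $u=v=0$ and $[x,y]^{2}=0$. In the general case you should either establish this auxiliary identity explicitly from the Morita axioms (if it indeed holds), or---more safely---abandon the Hall-identity detour and verify $[x,[x,y]^{2}]=0$ directly in block form, which is all the lemma actually requires.
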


\begin{proof}
If $A$ and $B$ are commutative, then we can prove that $\mathcal{G}$
satisfies the polynomial identity $[[x^2,y],[x,y]]$ by a direct but
rigorous procedure.

The necessity can be obtained by a similar proof of \cite[Lemma
2.7]{BenkovicEremita}.
\end{proof}

The following proposition is a much more common generalization of
\cite[Lemma 4.1]{BenkovicEremita}. We here give out the proof for
completeness and for reading convenience.

\begin{proposition}\label{xxsec4.2}
Let $\mathcal{G}=\left[
\begin{array}
[c]{cc}%
A & M\\
N & B\\
\end{array}
\right]$ and $\mathcal{G}^\prime=\left[
\begin{array}
[c]{cc}%
A^\prime & M^\prime\\
N^\prime & B^\prime\\
\end{array}
\right]$ be generalized matrix algebras over $\mathcal{R}$ with
$1/2\in\mathcal{R}$. Let $\mathfrak{l}\colon
\mathcal{G}\longrightarrow\mathcal{G'}$ be a Lie isomorphism. If
\begin{enumerate}
\item[{\rm(1)}] every commuting trace of an arbitrary bilinear mapping on $\mathcal{G'}$ is proper,
\item[{\rm(2)}] at least one of $A, B$ and at least one of $A^\prime, B^\prime$ are noncommutative,
\item[{\rm(3)}] $M^\prime$ is loyal,
\end{enumerate}
then $\mathfrak{l}=\mathfrak{m}+\mathfrak{n}$, where
$\mathfrak{m}\colon: \mathcal{G}\longrightarrow\mathcal{G^\prime}$
is a homomorphism or the negative of an anti-homomorphism,
$\mathfrak{m}$ is injective, and $\mathfrak{n}\colon
\mathcal{G}\longrightarrow {\mathcal Z}(\mathcal{G}^\prime)$ is a
linear mapping vanishing on each commutator. Moreover, if
$\mathcal{G^\prime}$ is central over $\mathcal{R}$, then
$\mathfrak{m}$ is surjective.
\end{proposition}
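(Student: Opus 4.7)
The plan is to reduce the Lie-isomorphism problem to the commuting-trace result of Section~\ref{xxsec3} by pulling the multiplication of $\mathcal{G}$ back to $\mathcal{G}^\prime$ via $\mathfrak{l}$, and then to analyze the resulting proper form of the trace.

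First I would define an $\mathcal{R}$-bilinear map
$\mathfrak{q}\colon\mathcal{G}^\prime\times\mathcal{G}^\prime\to\mathcal{G}^\prime$ by
$\mathfrak{q}(x^\prime,y^\prime)=\mathfrak{l}\bigl(\mathfrak{l}^{-1}(x^\prime)\mathfrak{l}^{-1}(y^\prime)\bigr)$.
Its trace $\mathfrak{T}_{\mathfrak{q}}(x^\prime)=\mathfrak{l}\bigl(\mathfrak{l}^{-1}(x^\prime)^{2}\bigr)$ is commuting on $\mathcal{G}^\prime$, since $[\mathfrak{T}_{\mathfrak{q}}(x^\prime),x^\prime]=\mathfrak{l}\bigl([\mathfrak{l}^{-1}(x^\prime)^{2},\mathfrak{l}^{-1}(x^\prime)]\bigr)=0$. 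Hypothesis~(1) then gives the proper form $\mathfrak{T}_{\mathfrak{q}}(x^\prime)=z^\prime(x^\prime)^{2}+\mu^\prime(x^\prime)x^\prime+\nu^\prime(x^\prime)$ with $z^\prime\in\mathcal{Z}(\mathcal{G}^\prime)$. Putting $\lambda=\mu^\prime\circ\mathfrak{l}$, $\eta=\nu^\prime\circ\mathfrak{l}$ and $x^\prime=\mathfrak{l}(x)$ yields the master identity
$$\mathfrak{l}(x^{2})=z^\prime\mathfrak{l}(x)^{2}+\lambda(x)\mathfrak{l}(x)+\eta(x).\qquad(\star)$$
Linearizing $(\star)$ at $x+y$ and combining with $\mathfrak{l}(xy-yx)=[\mathfrak{l}(x),\mathfrak{l}(y)]$ (using $1/2\in\mathcal{R}$) produces
$$\mathfrak{l}(xy)=\tfrac{z^\prime+1}{2}\mathfrak{l}(x)\mathfrak{l}(y)+\tfrac{z^\prime-1}{2}\mathfrak{l}(y)\mathfrak{l}(x)+\tfrac{1}{2}\bigl(\lambda(x)\mathfrak{l}(y)+\lambda(y)\mathfrak{l}(x)\bigr)+\tilde{\eta}(x,y),$$
where $\tilde{\eta}$ is a symmetric $\mathcal{Z}(\mathcal{G}^\prime)$-valued bilinear form.

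The main obstacle is to show that $z^\prime$ collapses to a scalar $\pm 1$. I would apply the preceding formula twice to compute $\mathfrak{l}((xy)z)$ and $\mathfrak{l}(x(yz))$, equate them by associativity, and track the cubic-in-$\mathfrak{l}$ contributions: their difference collects into a single coefficient $\tfrac{(z^\prime)^{2}-1}{4}$ multiplying a non-trivial iterated commutator of the shape $[[\mathfrak{l}(x),\mathfrak{l}(z)],\mathfrak{l}(y)]$, with the remaining terms of the form $z^\prime\cdot C(u,v)\mathfrak{l}(w)$ for central $C$. Because at least one of $A^\prime,B^\prime$ is noncommutative (hypothesis~(2)), Lemma~\ref{xxsec4.1} guarantees choices of $x,y,z$ for which this commutator iterate is nonzero; commuting the identity with an auxiliary element kills the central correction terms, and loyalty of $M^\prime$ (hypothesis~(3)) together with Lemma~\ref{xxsec3.1} then peels off central factors to give $(z^\prime)^{2}=1$. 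A second application of Lemma~\ref{xxsec4.1} (this time using noncommutativity on the $\mathcal{G}$ side) should rule out any genuinely mixed contribution coming from the complementary central idempotents $\tfrac{1}{2}(1\pm z^\prime)$, leaving either $z^\prime=1$ on all of $\mathcal{G}^\prime$ (the homomorphism branch) or $z^\prime=-1$ on all of $\mathcal{G}^\prime$ (the negative anti-homomorphism branch).

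In the $z^\prime=1$ case the product formula reads $\mathfrak{l}(xy)=\mathfrak{l}(x)\mathfrak{l}(y)+\tfrac{1}{2}\lambda(x)\mathfrak{l}(y)+\tfrac{1}{2}\lambda(y)\mathfrak{l}(x)+\tilde{\eta}(x,y)$. I would define $\mathfrak{n}(x)=-\tfrac{1}{2}\lambda(x)+c(x)$ for a central correction $c(x)$ chosen so that $\mathfrak{m}:=\mathfrak{l}-\mathfrak{n}$ is multiplicative; direct expansion plus Lemma~\ref{xxsec3.1} pin $c$ down consistently in terms of $\tilde{\eta}$ and $\lambda$. The $z^\prime=-1$ case proceeds symmetrically, producing $\mathfrak{m}(xy)=-\mathfrak{m}(y)\mathfrak{m}(x)$. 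In both cases $\mathfrak{n}$ is $\mathcal{R}$-linear and $\mathcal{Z}(\mathcal{G}^\prime)$-valued; its vanishing on commutators follows since $\mathfrak{l}=\mathfrak{m}+\mathfrak{n}$ is a Lie isomorphism while $\mathfrak{m}$ is itself already a Lie homomorphism.

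For injectivity: if $\mathfrak{m}(x)=0$, then $\mathfrak{l}(x)=\mathfrak{n}(x)\in\mathcal{Z}(\mathcal{G}^\prime)$, whence $\mathfrak{l}([x,y])=[\mathfrak{l}(x),\mathfrak{l}(y)]=0$ for every $y\in\mathcal{G}$ and bijectivity of $\mathfrak{l}$ forces $x\in\mathcal{Z}(\mathcal{G})$. Multiplicativity (or anti-multiplicativity) of $\mathfrak{m}$ then gives $\mathfrak{m}(xy)=0$ for all $y$, so repeating the argument places $x\mathcal{G}\subseteq\mathcal{Z}(\mathcal{G})$; Lemma~\ref{xxsec3.3} (no nonzero central ideals) yields $x\mathcal{G}=0$, and unitality gives $x=0$. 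For surjectivity when $\mathcal{G}^\prime$ is central over $\mathcal{R}$: from $\mathfrak{l}(\mathcal{G})=\mathfrak{m}(\mathcal{G})+\mathfrak{n}(\mathcal{G})\subseteq\mathfrak{m}(\mathcal{G})+\mathcal{R}\cdot 1_{\mathcal{G}^\prime}$, it suffices to show $\mathcal{R}\cdot 1_{\mathcal{G}^\prime}\subseteq\mathrm{Im}(\mathfrak{m})$. The element $\mathfrak{m}(1)$ is a central idempotent in the homomorphism case, and satisfies $\mathfrak{m}(1)(1+\mathfrak{m}(1))=0$ in the anti-homomorphism case; Lemma~\ref{xxsec3.2} (domain property of $\mathcal{Z}(\mathcal{G}^\prime)$) leaves only $\mathfrak{m}(1)\in\{\pm 1_{\mathcal{G}^\prime}\}$, so $\mathcal{R}$-linearity of $\mathfrak{m}$ places $\mathcal{R}\cdot 1_{\mathcal{G}^\prime}$ in $\mathrm{Im}(\mathfrak{m})$ and $\mathfrak{m}$ is surjective.
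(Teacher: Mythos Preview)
Your overall strategy---pull back multiplication through $\mathfrak{l}$ to get a commuting trace on $\mathcal{G}'$, invoke hypothesis~(1) for the proper form, linearize, and then use associativity of $\mathcal{G}$ to pin down the leading coefficient---matches the paper's. The endgame (injectivity via Lemma~\ref{xxsec3.3}, surjectivity via $\mathfrak{m}(1)=\pm 1$ and the domain property of the center) is also essentially the same, modulo cosmetic differences.

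There is, however, a genuine gap in the heart of your argument. After the linearized product formula you have, in effect,
\[
\mathfrak{l}(xy)=\tfrac{z'+1}{2}\,\mathfrak{l}(x)\mathfrak{l}(y)+\tfrac{z'-1}{2}\,\mathfrak{l}(y)\mathfrak{l}(x)+(\text{central-valued terms}).
\]
The paper rewrites this (after setting $\mathfrak{m}=\lambda\mathfrak{l}+\tfrac12\mu$, $\alpha=\tfrac12(\lambda+1)$) as
\[
\lambda\mathfrak{m}(xy)=\alpha\,\mathfrak{m}(x)\mathfrak{m}(y)+(\alpha-1)\,\mathfrak{m}(y)\mathfrak{m}(x)+\varepsilon(x,y),
\qquad \varepsilon(x,y)\in\mathcal{Z}(\mathcal{G}').
\]
The crux of the proof is to show \emph{both} $\varepsilon=0$ \emph{and} $\alpha(\alpha-1)=0$. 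Your proposal treats these as separable: first ``commuting with an auxiliary element kills the central correction terms'' to get $(z')^2=1$, and then ``a central correction $c(x)$ chosen so that $\mathfrak{m}:=\mathfrak{l}-\mathfrak{n}$ is multiplicative'' handles the rest. Neither step works as stated. In the associativity relation (the paper's~(4.8)),
\[
\alpha(\alpha-1)\bigl[\mathfrak{m}(y),[\mathfrak{m}(z),\mathfrak{m}(x)]\bigr]+\lambda\varepsilon(y,z)\mathfrak{m}(x)-\lambda\varepsilon(x,y)\mathfrak{m}(z)\in\mathcal{Z}(\mathcal{G}'),
\]
commuting with an auxiliary element does \emph{not} annihilate the $\varepsilon$-terms: they become $\lambda\varepsilon(y,z)[\mathfrak{m}(x),u]$ and $\lambda\varepsilon(x,y)[\mathfrak{m}(z),u]$, which survive. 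The paper instead substitutes $z=x^2$ to collapse the triple commutator, isolates $\varepsilon$ at a carefully chosen $x_0$ using loyalty of $M'$ and Lemma~\ref{xxsec3.1}, bootstraps via symmetry (the $x\pm x_0$ trick), and only \emph{after} $\varepsilon=0$ deduces $\lambda^4\alpha(\alpha-1)=0$ and invokes Lemma~\ref{xxsec3.2}. Your second claim---that one can always manufacture a central $c$ making $\mathfrak{l}-\mathfrak{n}$ exactly multiplicative---is equivalent to asserting that the $2$-cocycle $\varepsilon$ is a coboundary, which is precisely what needs to be proved and is not a formal consequence of Lemma~\ref{xxsec3.1}. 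Without the vanishing of $\varepsilon$ you only get $\mathfrak{m}(xy)\equiv\mathfrak{m}(x)\mathfrak{m}(y)\pmod{\mathcal{Z}(\mathcal{G}')}$, which is not enough for the stated conclusion.
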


\begin{proof}
Clearly $[\mathfrak{l}(x),\mathfrak{l}(x^2)]=0$ for all $x\in
\mathcal{G}$. Replacing $x$ by $\mathfrak{l}^{-1}(y)$, we get
$[y,\mathfrak{l}(\mathfrak{l}^{-1}(y)^2)]=0$ for all $y\in
\mathcal{G}^\prime$. This means that the mapping
$\mathfrak{T}_{\mathfrak
q}(y):=\mathfrak{l}(\mathfrak{l}^{-1}(y)^2)$ is commuting. Since
$\mathfrak{T}_{\mathfrak q}$ is also a trace of the bilinear mapping
${\mathfrak q}\colon
\mathcal{G'}\times\mathcal{G'}\longrightarrow\mathcal{G'}$,
${\mathfrak q}(y,z):={\mathfrak l}({\mathfrak l}^{-1}(y){\mathfrak
l}^{-1}(z))$, by the hypothesis there exist $\lambda\in
\mathcal{Z(G^\prime)}$, a linear mapping $\mu_1:
\mathcal{G'}\longrightarrow \mathcal{Z(G^\prime)}$ and a trace
$\nu_1: \mathcal{G'}\longrightarrow \mathcal{Z(G^\prime)}$ of a
bilinear mapping such that
$$
\mathfrak{l}\big(\mathfrak{l}^{-1}(y)^2\big)=\lambda
y^2+\mu_1(y)y+\nu_1(y) \eqno(4.1)
$$
for all $y\in \mathcal{G}^\prime$. Let $\mu=\mu_1\mathfrak{l}$ and
$\nu=\nu_1\mathfrak{l}$. Then $\mu$ and $\nu$ are mappings of
$\mathcal{G}$ into $\mathcal{Z(G^\prime)}$ and $\mu$ is linear.
Hence $(4.1)$ can be rewritten as
$$
\mathfrak{l}(x^2)=\lambda
\mathfrak{l}(x)^2+\mu(x)\mathfrak{l}(x)+\nu(x) \eqno(4.2)
$$
for all $x\in \mathcal{G}$.

We assert that $\lambda\neq 0$. Otherwise we have
$\mathfrak{l}(x^2)-\mu(x)\mathfrak{l}(x)\in \mathcal{Z(G^\prime)}$
by $(4.2)$ and hence
$$
\mathfrak{l}([[x^2,y],[x,y]])=[[\mu(x)\mathfrak{l}(x),\mathfrak{l}(y)],[\mathfrak{l}(x),\mathfrak{l}(y)]]=0
$$
for all $x,y\in \mathcal{G}$. Consequently, $[[x^2,y],[x,y]]=0$ for
all $x,y\in \mathcal{G}$, which is contradictory to Lemma
\ref{xxsec4.1} by our assumptions.

Now we define a linear mapping $\mathfrak{m}\colon
\mathcal{G}\longrightarrow \mathcal{G'}$ by
$$
\mathfrak{m}(x):=\lambda\mathfrak{l}(x)+\frac{1}{2}\mu(x).\eqno(4.3)
$$
In view of $(4.2)$ we have
$$
\mathfrak{m}(x^2)=\lambda\mathfrak{l}(x^2)+\frac{1}{2}\mu(x^2)=\lambda^2\mathfrak{l}(x)^2+\lambda\mu(x)\mathfrak{l}(x)+
\lambda\nu(x)+\frac{1}{2}\mu(x^2).
$$
On the other hand,
$$
\mathfrak{m}(x)^2=\left(\lambda\mathfrak{l}(x)+\frac{1}{2}\mu(x)\right)^2=\lambda^2\mathfrak{l}(x)^2
+\lambda\mu(x)\mathfrak{l}(x)+\frac{1}{4}\mu(x)^2.
$$
Comparing the above two identities we get
$$
\mathfrak{m}(x^2)-\mathfrak{m}(x)^2\in \mathcal{Z(G^\prime)}
\eqno(4.4)
$$
for all $x\in \mathcal{G}$. Linearizing $(4.4)$ we obtain
$$
\mathfrak{m}(xy+yx)-\mathfrak{m}(x)\mathfrak{m}(y)-\mathfrak{m}(y)\mathfrak{m}(x)\in
\mathcal{Z(G^\prime)} \eqno(4.5)
$$
for all $x,y\in \mathcal{G}$. In addition, by $(4.3)$ it follows
that
$$
\begin{aligned}
\lambda\mathfrak{m}\big([x,y]\big)&=\lambda^2\mathfrak{l}\big([x,y]\big)+\frac{1}{2}\lambda\mu\big([x,y]\big)=
[\lambda\mathfrak{l}(x),\lambda\mathfrak{l}(y)]+\frac{1}{2}\lambda\mu\big([x,y]\big)\\
&=\left[\mathfrak{m}(x)-\frac{1}{2}\mu(x),\mathfrak{m}(y)-\frac{1}{2}\mu(y) \right]+\frac{1}{2}\lambda\mu\big([x,y]\big)\\
&=[\mathfrak{m}(x),\mathfrak{m}(y)]+\frac{1}{2}\lambda\mu\big([x,y]\big)
\end{aligned}$$
Therefore
$$
\lambda\mathfrak{m}\big([x,y]\big)-[\mathfrak{m}(x),\mathfrak{m}(y)]\in
\mathcal{Z(G^\prime)} \eqno(4.6)
$$
for all $x,y\in \mathcal{G}$. Multiplying $(4.5)$ by $\lambda$ and
comparing with $(4.6)$ we arrive at
$$
2\lambda\mathfrak{m}(xy)-(\lambda+1)\mathfrak{m}(x)\mathfrak{m}(y)-(\lambda-1)\mathfrak{m}(y)\mathfrak{m}(x)\in
\mathcal{Z(G^\prime)}
$$
for all $x,y\in \mathcal{G}$. Consequently, the mapping
$$
\varepsilon(x,y):=\lambda\mathfrak{m}(xy)-\frac{1}{2}(\lambda+1)\mathfrak{m}(x)\mathfrak{m}(y)-\frac{1}{2}(\lambda-1)\mathfrak{m}(y)\mathfrak{m}(x)
$$
maps from $\mathcal{G}\times\mathcal{G}$ into
$\mathcal{Z(G^\prime)}$. Let us denote $\frac{1}{2}(\lambda+1)$ by
$\alpha$. Then
$$
\lambda\mathfrak{m}(xy)=\alpha\mathfrak{m}(x)\mathfrak{m}(y)+(\alpha-1)\mathfrak{m}(y)\mathfrak{m}(x)+\varepsilon(x,y)
\eqno(4.7)
$$
for all $x,y\in \mathcal{G}$.

Our aim is to show that $\varepsilon=0$ and that $\alpha=0$ or
$\alpha=1$. In view of $(4.7)$ we have
$$
\begin{aligned}
\lambda^2{\mathfrak m}(xyz)&=\lambda^2{\mathfrak
m}(x(yz))=\lambda\alpha{\mathfrak m}(x){\mathfrak m}(yz)+
\lambda(\alpha-1){\mathfrak m}(yz){\mathfrak m}(x)+\lambda\varepsilon(x,yz)\\
&=\alpha{\mathfrak m}(x)\big(\alpha{\mathfrak m}(y){\mathfrak m}(z)+(\alpha-1){\mathfrak m}(z){\mathfrak m}(y)+\varepsilon(y,z)\big)\\
&\quad +(\alpha-1)\big(\alpha{\mathfrak m}(y){\mathfrak m}(z)+(\alpha-1){\mathfrak m}(z){\mathfrak m}(y)+\varepsilon(y,z)\big){\mathfrak m}(x)+\lambda\varepsilon(x,yz)\\
&=\alpha^2{\mathfrak m}(x){\mathfrak m}(y){\mathfrak m}(z)+\alpha(\alpha-1){\mathfrak m}(x){\mathfrak m}(z){\mathfrak m}(y)+\alpha(\alpha-1){\mathfrak m}(y){\mathfrak m}(z){\mathfrak m}(x)\\
&\quad +(\alpha-1)^2{\mathfrak m}(z){\mathfrak m}(y){\mathfrak
m}(x)+\lambda\varepsilon(x,yz)+\lambda\varepsilon(y,z){\mathfrak
m}(x).
\end{aligned}$$
On the other hand,
$$\begin{aligned}
\lambda^2{\mathfrak m}(xyz)&=\lambda^2{\mathfrak
m}((xy)z)=\lambda\alpha{\mathfrak m}(xy){\mathfrak m}(z)+
\lambda(\alpha-1){\mathfrak m}(z){\mathfrak m}(xy)+\lambda\varepsilon(xy,z)\\
&=\alpha^2{\mathfrak m}(x){\mathfrak m}(y){\mathfrak m}(z)+\alpha(\alpha-1){\mathfrak m}(y){\mathfrak m}(x){\mathfrak m}(z)+\alpha(\alpha-1){\mathfrak m}(z){\mathfrak m}(x){\mathfrak m}(y)\\
&\quad +(\alpha-1)^2{\mathfrak m}(z){\mathfrak m}(y){\mathfrak
m}(x)+\lambda\varepsilon(xy,z)+\lambda\varepsilon(x,y){\mathfrak
m}(z).
\end{aligned}$$
Comparing the above two identities we get
$$
\alpha(\alpha-1)[{\mathfrak m}(y),[{\mathfrak m}(z),{\mathfrak
m}(x)]]+\lambda\varepsilon(y,z){\mathfrak
m}(x)-\lambda\varepsilon(x,y){\mathfrak m}(z)\in
\mathcal{Z(G^\prime)} \eqno(4.8)
$$
for all $x,y,z\in \mathcal{G}$. Substituting $x^2$ for $z$ in
$(4.8)$ and using $(4.4)$ we arrive at
$$
\lambda\varepsilon(y,x^2){\mathfrak
m}(x)-\lambda\varepsilon(x,y){\mathfrak m}(x)^2\in
\mathcal{Z(G^\prime)} \eqno(4.9)
$$
for all $x,y\in \mathcal{G}$. Thus $(4.3)$ can be written as
$$
-\lambda^3\varepsilon(x,y){\mathfrak
l}(x)^2+\lambda^2\big(\varepsilon(y,x^2)+\mu(x)\varepsilon(x,y)\big){\mathfrak
l}(x)\in \mathcal{Z(G^\prime)} \eqno(4.10)
$$
for all $x,y\in \mathcal{G}$, which is due to $(4.3)$. Commuting
with arbitrary $u\in \mathcal{G'}$ and then with $[{\mathfrak
l}(x),u]$ we obtain
$$
\lambda^3\varepsilon(x,y)[[{\mathfrak l}(x)^2,u],[{\mathfrak
l}(x),u]]=0 \eqno(4.11)
$$
for all $x,y\in \mathcal{G}$. We may assume that $A^\prime$ is
non-commutative. Then choose $a_1,a_2\in A^\prime$ such that
$a_1[a_1,a_2]a_1\neq 0$. Putting
$$
{\mathfrak l}(x_0)=\left[
\begin{array}
[c]{cc}%
a_1 & 0\\
0 & 0\\
\end{array}
\right]\quad\text{and}\quad u=\left[
\begin{array}
[c]{cc}%
a_2 & m\\
0 & 0\\
\end{array}
\right]
$$
for some $x_0\in\mathcal{G}$ and an arbitrary $m\in M^\prime$ in
$(4.11)$ gives
$$
\pi_{A^\prime}\big(\lambda^3\varepsilon(x_0,y)\big)a_1[a_1,a_2]a_1m=0
$$
for all $m\in M^\prime$. By the loyality of $M^\prime$ it follows
that
$\pi_{A'}\big(\lambda^3\varepsilon(x_0,y)\big)a_1[a_1,a_2]a_1=0$.
Hence $\pi_{A'}\big(\lambda^3\varepsilon(x_0,y)\big)=0$ by Lemma
\ref{xxsec3.1}. This shows that $\lambda^3\varepsilon(x_0,y)=0$ for
all $y\in \mathcal{G}$. Since $\lambda\neq 0$,
$\varepsilon(x_0,\mathcal{G})=0$ by Lemma \ref{xxsec3.2}. According
to $(4.10)$ we now get $\lambda^2\varepsilon(y,x^2_0){\mathfrak
l}(x_0) \in \mathcal{Z(G^\prime)}$ for all $y\in \mathcal{G}$. This
implies that $\varepsilon(\mathcal{G},x_0^2)=0$. We assert that
$\varepsilon$ is symmetric. Taking $z=x$ into $(4.8)$ and using
$(4.3)$ yields
$$
\lambda^2\big(\varepsilon(y,x)-\varepsilon(x,y)\big){\mathfrak
l}(x)\in \mathcal{Z(G^\prime)} \eqno(4.12)
$$
for all $x,y\in \mathcal{G}$. If $x=x_0$, then
$\lambda^2\varepsilon(y,x_0){\mathfrak l}(x_0)\in
\mathcal{Z(G^\prime)}$ for all $y\in \mathcal{G}$. Thus, similarly
as above, we conclude that $\varepsilon(\mathcal{G},x_0)=0$.
Replacing $x$ by $x+x_0$ in $(4.12)$ we have
$$
\lambda^2\big(\varepsilon(y,x)-\varepsilon(x,y)\big){\mathfrak
l}(x_0)\in \mathcal{Z(G^\prime)}
$$
for all $x,y\in \mathcal{G}$. This implies that $\varepsilon$ is
symmetric. Replacing $x$ by $y\pm x_0$ in $(4.9)$ and combining
those two relations we get
$$
2\lambda\varepsilon(y,x_0\circ y){\mathfrak
m}(x_0)-2\lambda\varepsilon(y,y){\mathfrak m}(x_0)^2\in
\mathcal{Z(G^\prime)}
$$
for all $y\in \mathcal{G}$, which can be in view of $(4.3)$ written
as
$$
-\lambda^3\varepsilon(y,y){\mathfrak
m}(x_0)^2+\lambda^2\big(\varepsilon(y,x_0\circ
y)-\mu(x_0)\varepsilon(y,y)\big){\mathfrak l}(x_0)\in
\mathcal{Z(G^\prime)}
$$
for all $y\in \mathcal{G}$. Therefore
$$
\lambda^3\varepsilon(y,y)[[{\mathfrak m}(x_0)^2,u],[{\mathfrak
m}(x_0),u]]=0
$$
for all $y\in \mathcal{G}$ and $u\in \mathcal{G'}$. Similarly as
above it follows that $\lambda^3\varepsilon(y,y)=0$ and hence
$\varepsilon(y,y)=0$ for all $y\in \mathcal{G}$. The linearization
of $\varepsilon(y,y)=0$ shows that $\varepsilon=0$. Correspondingly,
$(4.8)$ gives
$$
\lambda^4\alpha(\alpha-1)[{\mathfrak l}(x),[{\mathfrak
l}(y),[{\mathfrak l}(z), {\mathfrak l}(w)]]]=0
$$
for all $x,y,z,w\in \mathcal{G}$, Since $\mathfrak{l}$ is
surjective, we know that
$\lambda^4\alpha(\alpha-1)[x',[y',[z',w']]]=0$ for all
$x',y',z',w'\in \mathcal{G^\prime}$. Let us take
$$
x'=y'=z'=\left[
\begin{array}
[c]{cc}%
1 & 0\\
0 & 0\\
\end{array}
\right]\quad\text{and}\quad w'=\left[
\begin{array}
[c]{cc}%
0 & m\\
0 & 0\\
\end{array}
\right],
$$
where $m$ is an arbitrary element in $M^\prime$. Thus
$\pi_{A'}(\lambda^4\alpha(\alpha-1))m=0$ for all $m\in M'$.
Therefore $\pi_{A'}(\lambda^4\alpha(\alpha-1))=0$ and hence
$\lambda^4\alpha(\alpha-1)=0$. Using Lemma \ref{xxsec3.2} we see
that $\alpha=0$ or $\alpha=1$.

Assume that $\alpha=0$. Then $\lambda=2\alpha-1=-1$, which by
$(4.7)$ further implies that $\mathfrak{m}$ is an anti-homomorphism.
Let us write $\mathfrak{n}(x)=\mu(x)/2$. It follows from $(4.3)$
that $\mathfrak{l}=-\mathfrak{m}+\mathfrak{n}$, which clearly yields
that $\mathfrak{n}([x,y])=0$ for all $x,y\in \mathcal{G}$. In an
analogous way we claim that if $\alpha=1$, then
$\mathfrak{l}=\mathfrak{m}+\mathfrak{n}$, where $\mathfrak{m}$ is a
homomorphism and $\mathfrak{n}(x)=-\mu(x)/2$ vanishing on each
commutator.

We next show that $\mathfrak{m}$ is injective. Suppose that
$\mathfrak{m}(x)=0$ for some $x\in\mathcal{G}$. Then
$\mathfrak{l}(x)\in \mathcal{Z(G^\prime)}$ and hence $x\in
\mathcal{Z(G)}$. Thus $\ker(\mathfrak{m})\subseteq \mathcal{Z(G)}$.
Note that the generalized matrix algebra $\mathcal{G}$ does not
contain nonzero central ideals (see Lemma \ref{xxsec3.3}). So
$\ker(\mathfrak{m})=0$.

Finally, we need to prove that if $\mathcal{G^\prime}$ is central
over $\mathcal{R}$, then $\mathfrak{m}$ is surjective. We claim that
$\mathfrak{m}(1)=1$ or $\mathfrak{m}(1)=-1$. Since $\mathfrak{l}$ is
a Lie isomorphism, we have $\mathfrak{l}(1)\in
\mathcal{Z(G^\prime)}$ and hence
$\mathfrak{m}(1)=\mathfrak{l}(1)-\mathfrak{n}(1)\in
\mathcal{Z(G^\prime)}$. Further, in the case $\mathfrak{m}$ is a
homomorphism, we have
$\mathfrak{m}(x)=\mathfrak{m}(1)\mathfrak{m}(x)$ for all $x\in
\mathcal{G}$. Using
$\mathfrak{m}(x)=\mathfrak{l}(x)-\mathfrak{n}(x)$ we get
$(\mathfrak{m}(1)-1)\mathfrak{l}(x)-(\mathfrak{m}(1)-1)\mathfrak{n}(x)=0$
for all $x\in \mathcal{G}$. Hence
$(\mathfrak{m}(1)-1)[\mathcal{G'},\mathcal{G'}]=0$. Applying Lemma
\ref{xxsec3.1} yields that $\mathfrak{m}(1)=1$. Similarly, if
$\mathfrak{m}$ is the negative of an anti-homomorphism, we obtain
$\mathfrak{m}(1)=-1$. We may write
$\mathfrak{n}(x)=\mathfrak{h}(x)1$ for some linear mapping
$\mathfrak{h}: \mathcal{G}\longrightarrow\mathcal{R}$. Therefore
$\mathfrak{l}(x)=\mathfrak{m}(x)+\mathfrak{h}(x)1=\mathfrak{m}(x\pm
\mathfrak{h}(x)1)$. So $\mathfrak{m}$ is surjective, which is due to
the fact $\mathfrak{l}$ is bijective. Thus we complete the proof of
the proposition.
\end{proof}

\begin{theorem}\label{xxsec4.3}
Let $\mathcal{G}=\left[
\begin{array}
[c]{cc}%
A & M\\
N & B\\
\end{array}
\right]$ and $\mathcal{G}^\prime=\left[
\begin{array}
[c]{cc}%
A^\prime & M^\prime\\
N^\prime & B^\prime\\
\end{array}
\right]$ be generalized matrix algebras over $\mathcal{R}$ with
$1/2\in\mathcal{R}$. Let $\mathfrak{l}\colon
\mathcal{G}\longrightarrow\mathcal{G'}$ be a Lie isomorphism. If
\begin{enumerate}
\item[{\rm(1)}] every commuting linear mapping on $A'$ or $B'$ is proper,
\item[{\rm(2)}] $\pi_{A'}(\mathcal{Z(G^\prime)})={\mathcal Z}(A')\neq A'$
and $\pi_{B'}(\mathcal{Z(G^\prime)})={\mathcal Z}(B')\neq B'$,
\item[{\rm(3)}] either $A$ or $B$ is noncommutative,
\item[{\rm(4)}] $M'$ is loyal,
\end{enumerate}
then $\mathfrak{l}=\mathfrak{m}+\mathfrak{n}$, where
$\mathfrak{m}\colon: \mathcal{G}\longrightarrow\mathcal{G^\prime}$
is a homomorphism or the negative of an anti-homomorphism,
$\mathfrak{m}$ is injective, and $\mathfrak{n}\colon
\mathcal{G}\longrightarrow {\mathcal Z}(\mathcal{G}^\prime)$ is a
linear mapping vanishing on each commutator. Moreover, if
$\mathcal{G^\prime}$ is central over $\mathcal{R}$, then
$\mathfrak{m}$ is surjective.
\end{theorem}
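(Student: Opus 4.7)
The plan is to deduce Theorem \ref{xxsec4.3} directly from Proposition \ref{xxsec4.2} by verifying that the stated hypotheses imply those of the proposition. The key observation is that conditions (1), (2), and (4) of Theorem \ref{xxsec4.3}, together with the $2$-torsionfreeness of $\mathcal{G}'$ (which is automatic from $1/2\in\mathcal{R}$), are precisely the hypotheses of Theorem \ref{xxsec3.4} applied to the target algebra $\mathcal{G}'$. Thus Theorem \ref{xxsec3.4} yields that every commuting trace of an arbitrary $\mathcal{R}$-bilinear mapping on $\mathcal{G}'$ is proper, which is exactly hypothesis (1) of Proposition \ref{xxsec4.2}.

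The remaining conditions of Proposition \ref{xxsec4.2} are easily checked. Its hypothesis (3), loyalty of $M'$, is condition (4) of Theorem \ref{xxsec4.3}. Its hypothesis (2) requires one of $A, B$ and one of $A', B'$ to be noncommutative: the former is condition (3) of Theorem \ref{xxsec4.3}, and the latter follows from condition (2), since $\mathcal{Z}(A')\neq A'$ (respectively $\mathcal{Z}(B')\neq B'$) forces $A'$ (respectively $B'$) to contain elements outside its center, hence to be noncommutative. Once these verifications are in place, Proposition \ref{xxsec4.2} delivers the desired decomposition $\mathfrak{l}=\mathfrak{m}+\mathfrak{n}$, the injectivity of $\mathfrak{m}$, and the surjectivity of $\mathfrak{m}$ under the additional centrality assumption on $\mathcal{G}'$.

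In this approach there is no genuine obstacle to overcome, since all the substantive work has already been carried out in Theorem \ref{xxsec3.4} (describing commuting traces on generalized matrix algebras) and in Proposition \ref{xxsec4.2} (reducing a Lie isomorphism to a commuting trace via the standard trick $y\mapsto \mathfrak{l}(\mathfrak{l}^{-1}(y)^2)$, producing the scalar $\lambda$, and then analyzing the associated map $\mathfrak{m}(x)=\lambda\mathfrak{l}(x)+\tfrac{1}{2}\mu(x)$). The role of Theorem \ref{xxsec4.3} is merely to repackage Proposition \ref{xxsec4.2} into a form whose hypotheses refer only to the corner algebras $A'$ and $B'$ and to the center of $\mathcal{G}'$, since in concrete examples (triangular algebras, full matrix algebras, nest algebras, inflated algebras, prime algebras with a nontrivial idempotent) properness of commuting linear mappings on $A'$ and $B'$ and the center conditions are well-documented, whereas direct verification of properness of commuting traces of bilinear mappings on $\mathcal{G}'$ would be awkward. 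Consequently the entire proof is one paragraph: cite Theorem \ref{xxsec3.4} to obtain hypothesis (1) of Proposition \ref{xxsec4.2}, note the other hypotheses, and invoke Proposition \ref{xxsec4.2}.
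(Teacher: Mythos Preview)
Your proposal is correct and matches the paper's approach exactly: the paper's proof of Theorem \ref{xxsec4.3} is the single sentence ``It follows from Theorem \ref{xxsec3.4} and Proposition \ref{xxsec4.2} directly.'' You have simply spelled out the hypothesis-checking that this sentence encodes, including the observation that condition (2) forces both $A'$ and $B'$ to be noncommutative, which supplies the $\mathcal{G}'$-side of Proposition \ref{xxsec4.2}(2).
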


\begin{proof}
It follows from Theorem \ref{xxsec3.4} and Proposition
\ref{xxsec4.2} directly.
\end{proof}

As a direct consequence of Theorem \ref{xxsec4.3} we have

\begin{corollary}\cite[Theorem 4.3]{BenkovicEremita}\label{xxsec4.4}
Let $\mathcal{G}=\left[
\begin{array}
[c]{cc}%
A & M\\
O & B\\
\end{array}
\right]$ and $\mathcal{G}^\prime=\left[
\begin{array}
[c]{cc}%
A^\prime & M^\prime\\
O & B^\prime\\
\end{array}
\right]$ be triangular algebras over $\mathcal{R}$ with
$1/2\in\mathcal{R}$. Let $\mathfrak{l}\colon
\mathcal{G}\longrightarrow\mathcal{G'}$ be a Lie isomorphism. If
\begin{enumerate}
\item[{\rm(1)}] every commuting linear mapping on $A'$ or $B'$ is proper,
\item[{\rm(2)}] $\pi_{A'}(\mathcal{Z(G^\prime)})={\mathcal Z}(A')\neq A'$ and $\pi_{B'}(\mathcal{Z(G^\prime)})={\mathcal Z}(B')\neq B'$,
\item[{\rm(3)}] either $A$ or $B$ is noncommutative,
\item[{\rm(4)}] $M'$ is loyal,
\end{enumerate}
then $\mathfrak{l}=\mathfrak{m}+\mathfrak{n}$, where
$\mathfrak{m}\colon: \mathcal{G}\longrightarrow\mathcal{G^\prime}$
is a homomorphism or the negative of an anti-homomorphism,
$\mathfrak{m}$ is injective, and $\mathfrak{n}\colon
\mathcal{G}\longrightarrow {\mathcal Z}(\mathcal{G}^\prime)$ is a
linear mapping vanishing on each commutator. Moreover, if
$\mathcal{G^\prime}$ is central over $\mathcal{R}$, then
$\mathfrak{m}$ is surjective.
\end{corollary}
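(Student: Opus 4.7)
The plan is simply to specialize Theorem \ref{xxsec4.3} to the case $N = N' = 0$. A triangular algebra $\left[\smallmatrix A & M \\ O & B \endsmallmatrix\right]$ is, by definition, the generalized matrix algebra $\left[\smallmatrix A & M \\ N & B \endsmallmatrix\right]$ coming from the Morita context in which $N = 0$ (and the pairings $\Phi_{MN}, \Psi_{NM}$ are trivially zero). Under this identification, hypotheses (1)--(4) of Corollary \ref{xxsec4.4} are literally hypotheses (1)--(4) of Theorem \ref{xxsec4.3}. The standing assumption of Section \ref{xxsec3} and Section \ref{xxsec4} that $M$ is faithful as a left $A$-module and as a right $B$-module is automatic (or is the customary assumption) for triangular algebras, so nothing is lost by invoking the generalized matrix version.

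Before applying Theorem \ref{xxsec4.3}, I would briefly verify that nothing in the underlying chain of results (Lemmas \ref{xxsec3.1}--\ref{xxsec3.3}, Theorem \ref{xxsec3.4}, Lemma \ref{xxsec4.1}, Proposition \ref{xxsec4.2}) secretly requires $N \neq 0$. An inspection confirms this: the description of $\mathcal{Z(G)}$, the construction of the isomorphism $\varphi \colon \pi_A(\mathcal{Z(G)}) \to \pi_B(\mathcal{Z(G)})$, and the twelve-step decomposition of the commuting trace in the proof of Theorem \ref{xxsec3.4} all involve $N$ only through the entries $a_3 \in A_3 = N$. When $N = 0$ these entries vanish identically, every summand $f_{i3}, g_{i3}, h_{ij}, k_{i3}$ is automatically zero, and the relations collapse to simpler versions of the same equalities, so the argument goes through without modification.

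Consequently, invoking Theorem \ref{xxsec4.3} with $N = N' = 0$ produces the decomposition $\mathfrak{l} = \mathfrak{m} + \mathfrak{n}$ with the stated properties of $\mathfrak{m}$ (injective homomorphism or negative of an anti-homomorphism, surjective under centrality of $\mathcal{G}'$) and $\mathfrak{n}$ (linear, central-valued, vanishing on commutators). There is no substantive obstacle here; the only thing worth flagging is the routine check in the preceding paragraph that the general-$N$ arguments degrade gracefully in the triangular limit.
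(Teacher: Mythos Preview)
Your proposal is correct and matches the paper's approach exactly: the paper presents Corollary \ref{xxsec4.4} as an immediate consequence of Theorem \ref{xxsec4.3} with no further proof, since a triangular algebra is precisely a generalized matrix algebra with $N=0$. Your additional paragraph verifying that the underlying lemmas do not secretly require $N\neq 0$ is more than the paper bothers with, but it is accurate and does no harm.
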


In particular, we also have

\begin{corollary}\label{xxsec4.5}\cite[Corollary 4.4]{BenkovicEremita}
Let $n\geq 2$ and $\mathcal{R}$ be a commutative domain with
$\frac{1}{2}\in \mathcal{R}$. If ${\mathfrak l}\colon {\mathcal
T}_n(\mathcal{R})\longrightarrow {\mathcal T}_n(\mathcal{R})$ is a
Lie isomorphism, then ${\mathfrak l}={\mathfrak m}+ {\mathfrak n}$,
where ${\mathfrak m}\colon {\mathcal T}_n({\mathcal
R})\longrightarrow {\mathcal T}_n(\mathcal{R})$ is an isomorphism or
the negative of an antiisomorphism and ${\mathfrak n}\colon
{\mathcal T}_n(\mathcal{R})\longrightarrow \mathcal{R}1$ is a linear
mapping vanishing on each commutator.
\end{corollary}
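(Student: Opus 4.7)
The plan is to deduce Corollary \ref{xxsec4.5} from Corollary \ref{xxsec4.4} by realizing $\mathcal{T}_n(\mathcal{R})$ itself as a triangular algebra whose two diagonal blocks are noncommutative triangular matrix algebras over $\mathcal{R}$. For $n\geq 4$, I would write
\[
\mathcal{T}_n(\mathcal{R}) \;=\; \left[\begin{array}{cc} \mathcal{T}_2(\mathcal{R}) & M \\ O & \mathcal{T}_{n-2}(\mathcal{R}) \end{array}\right],
\]
where $M$ is the space of $2\times(n-2)$ matrices over $\mathcal{R}$, and invoke Corollary \ref{xxsec4.4} with $\mathcal{G}=\mathcal{G}'=\mathcal{T}_n(\mathcal{R})$ and with ${\mathfrak l}$ the given Lie isomorphism.

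The four hypotheses of Corollary \ref{xxsec4.4} are then verified in turn. Hypothesis (1) (properness of commuting linear mappings on the corners) is \cite[Corollary 4.1]{XiaoWei1} applied to $\mathcal{T}_2(\mathcal{R})$ and $\mathcal{T}_{n-2}(\mathcal{R})$, and is also an instance of Corollary \ref{xxsec3.15}. For hypotheses (2) and (3), I would use the well-known identification $\mathcal{Z}(\mathcal{T}_n(\mathcal{R}))=\mathcal{R}I_n$; its projections onto the corners are $\mathcal{R}I_2$ and $\mathcal{R}I_{n-2}$, which coincide with $\mathcal{Z}(\mathcal{T}_2(\mathcal{R}))$ and $\mathcal{Z}(\mathcal{T}_{n-2}(\mathcal{R}))$ respectively and are strictly smaller than the corresponding corner algebras (because $\mathcal{T}_2(\mathcal{R})$ and $\mathcal{T}_{n-2}(\mathcal{R})$ are noncommutative). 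Loyalty of $M$ (hypothesis (4)) reduces via matrix-unit computation $aE_{kl}b\mapsto a_{ik}b_{lj}$ to the hypothesis that $\mathcal{R}$ is a domain. Finally, $\mathcal{T}_n(\mathcal{R})$ is central over $\mathcal{R}$, so the surjectivity clause of Corollary \ref{xxsec4.4} applies; this yields $\mathfrak{l}=\mathfrak{m}+\mathfrak{n}$ with $\mathfrak{m}$ an isomorphism or the negative of an antiisomorphism, and $\mathfrak{n}$ landing in $\mathcal{R}I_n\cong \mathcal{R}\cdot 1$ and vanishing on commutators.

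The main obstacle is the small cases $n=2$ and $n=3$: no triangular decomposition of $\mathcal{T}_n(\mathcal{R})$ produces two noncommutative diagonal blocks for these $n$, so hypothesis (2) of Corollary \ref{xxsec4.4} cannot be achieved via the present machinery. For these cases I would separately appeal to Dokovi\'c's classification \cite{Dokovic} of Lie automorphisms of $\mathcal{T}_n(\mathcal{R})$ over commutative rings without nontrivial idempotents (a class containing commutative domains), which directly delivers the desired standard form. For $n=2$, $\mathcal{T}_2(\mathcal{R})$ is small enough that the classification can also be obtained by an explicit computation on the basis $\{E_{11},E_{12},E_{22}\}$, using that $[E_{11},E_{12}]=E_{12}$ forces a rigid structure on any Lie isomorphism.
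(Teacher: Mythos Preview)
The paper gives no explicit proof of this corollary; it simply records the statement as \cite[Corollary 4.4]{BenkovicEremita} under the heading ``In particular, we also have'', so there is no detailed argument to compare against. Your route via Corollary \ref{xxsec4.4} is the intended one and your verification of the four hypotheses for $n\geq 4$ is correct.

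Your diagnosis of the small cases is accurate but can be sharpened. For $n=3$ you do not need to leave the paper's framework: instead of Corollary \ref{xxsec4.4}, invoke Proposition \ref{xxsec4.2} together with Corollary \ref{xxsec3.15}. Proposition \ref{xxsec4.2} only requires that \emph{at least one} of the diagonal blocks of $\mathcal{G}$ and of $\mathcal{G}'$ be noncommutative (condition (2) there), not both, and condition (1) there is exactly Corollary \ref{xxsec3.15}. With the decomposition
\[
\mathcal{T}_3(\mathcal{R})=\left[\begin{array}{cc}\mathcal{R} & \mathcal{M}_{1\times 2}(\mathcal{R})\\ O & \mathcal{T}_2(\mathcal{R})\end{array}\right],
\]
the block $\mathcal{T}_2(\mathcal{R})$ is noncommutative, $\mathcal{M}_{1\times 2}(\mathcal{R})$ is loyal (your matrix-unit argument works verbatim), and $\mathcal{T}_3(\mathcal{R})$ is central over $\mathcal{R}$; so Proposition \ref{xxsec4.2} gives the full conclusion for $n=3$ without recourse to \cite{Dokovic}. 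Only $n=2$ genuinely escapes the machinery of the paper, since every triangular decomposition of $\mathcal{T}_2(\mathcal{R})$ has both corners equal to $\mathcal{R}$; your fallback to \cite{Dokovic} (or to a direct computation on the basis $\{E_{11},E_{12},E_{22}\}$) is then the right move.
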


\begin{corollary}\label{xxsec4.6}\cite[Corollary 4.5]{BenkovicEremita}
Let $\mathcal{N}$ and $\mathcal{N}^\prime $ be nests on a Hilbert
space $\mathbf{H}$, ${\mathcal T}(\mathcal{N})$ and ${\mathcal
T}(\mathcal{N}^\prime)$ be the nest algebras associated with
$\mathcal{N}$ and $\mathcal{N}^\prime$, respectively. If ${\mathfrak
l}\colon {\mathcal T}(\mathcal{N})\longrightarrow {\mathcal
T}(\mathcal{N}^\prime)$ is a Lie isomorphism, then ${\mathfrak
l}={\mathfrak m}+{\mathfrak n}$, where ${\mathfrak m}\colon
{\mathcal T}({\mathcal N})\longrightarrow {\mathcal
T}(\mathcal{N}^\prime)$ is an isomorphism or the negative of an
antiisomorphism and ${\mathfrak n}\colon {\mathcal
T}(\mathcal{N})\longrightarrow \mathbb{C}1^\prime$ is a linear
mapping vanishing on each commutator.
\end{corollary}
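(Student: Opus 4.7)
\noindent\textbf{Proof proposal for Corollary \ref{xxsec4.6}.}
The plan is to realize each nest algebra as a triangular algebra in the sense of Corollary \ref{xxsec4.4} and then verify that all four hypotheses of that corollary hold automatically for nest algebras on a Hilbert space. Assume first that both nests $\mathcal{N}$ and $\mathcal{N}^\prime$ are non-trivial and that $\mathbf{H}$ is of dimension at least $4$; the remaining low-dimensional or trivial-nest situations either collapse to $\mathcal{M}_n(\mathbb{C})$ (covered by Corollary \ref{xxsec3.18}) or to scalar algebras (where the statement is vacuous).

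First I would pick $P^\prime \in \mathcal{N}^\prime \setminus \{0, I\}$ with both $P^\prime \mathbf{H}$ and $(I-P^\prime)\mathbf{H}$ of dimension $\geq 2$, and similarly a non-trivial $P \in \mathcal{N}$. The standard $2\times 2$ block decomposition then writes
\[
\mathcal{T}(\mathcal{N}^\prime) = \left[\begin{array}{cc} A^\prime & M^\prime \\ 0 & B^\prime \end{array}\right],
\quad A^\prime = P^\prime\mathcal{T}(\mathcal{N}^\prime)P^\prime,\ B^\prime = (I-P^\prime)\mathcal{T}(\mathcal{N}^\prime)(I-P^\prime),
\]
and $M^\prime = P^\prime\mathcal{T}(\mathcal{N}^\prime)(I-P^\prime) = \mathcal{B}((I-P^\prime)\mathbf{H}, P^\prime\mathbf{H})$. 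Note that $A^\prime$ and $B^\prime$ are themselves nest algebras on the subspaces $P^\prime\mathbf{H}$ and $(I-P^\prime)\mathbf{H}$, with induced nests, and the analogous decomposition holds for $\mathcal{T}(\mathcal{N})$.

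Next I verify the four hypotheses of Corollary \ref{xxsec4.4}. \emph{(1)}~Since nest algebras are centrally closed prime algebras, every commuting additive (hence $\mathbb{C}$-linear) mapping on $A^\prime$ or $B^\prime$ is proper by the classical result of Bre\v{s}ar on prime rings (equivalently, a specialization of Corollary \ref{xxsec3.20}). \emph{(2)}~The center of any nest algebra is $\mathbb{C}I$, so $\mathcal{Z}(\mathcal{T}(\mathcal{N}^\prime)) = \mathbb{C}I^\prime$, which gives $\pi_{A^\prime}(\mathcal{Z}(\mathcal{T}(\mathcal{N}^\prime))) = \mathbb{C}P^\prime = \mathcal{Z}(A^\prime)$ and $\pi_{B^\prime}(\mathcal{Z}(\mathcal{T}(\mathcal{N}^\prime))) = \mathbb{C}(I-P^\prime) = \mathcal{Z}(B^\prime)$; the strict inclusion $\mathcal{Z}(A^\prime) \neq A^\prime$ (respectively for $B^\prime$) follows from our choice that $P^\prime\mathbf{H}$ (respectively $(I-P^\prime)\mathbf{H}$) has dimension $\geq 2$, so the corresponding nest algebra is non-commutative. \emph{(3)}~The analogous argument on the domain side shows $A$ (or $B$) is non-commutative. \emph{(4)}~Loyalty of $M^\prime = \mathcal{B}((I-P^\prime)\mathbf{H}, P^\prime\mathbf{H})$ is immediate: given nonzero $a \in A^\prime$ and nonzero $b \in B^\prime$, pick $\xi \in P^\prime\mathbf{H}$, $\eta \in (I-P^\prime)\mathbf{H}$ with $a^\ast$ and $b$ non-vanishing on suitable vectors; then $a(\xi \otimes \eta^\ast)b = a\xi \otimes (b^\ast\eta)^\ast \neq 0$.

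With these hypotheses in hand, Corollary \ref{xxsec4.4} applies and yields $\mathfrak{l} = \mathfrak{m} + \mathfrak{n}$ where $\mathfrak{m}$ is an injective homomorphism or the negative of an injective anti-homomorphism into $\mathcal{T}(\mathcal{N}^\prime)$, and $\mathfrak{n}\colon \mathcal{T}(\mathcal{N}) \to \mathcal{Z}(\mathcal{T}(\mathcal{N}^\prime)) = \mathbb{C}I^\prime$ is linear and vanishes on commutators. Since $\mathcal{T}(\mathcal{N}^\prime)$ is central over $\mathbb{C}$, the final clause of Corollary \ref{xxsec4.4} ensures $\mathfrak{m}$ is surjective, i.e., an isomorphism or the negative of an anti-isomorphism, completing the proof. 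The main obstacle I anticipate is twofold: cleanly handling the edge cases where one of $P^\prime\mathbf{H}, (I-P^\prime)\mathbf{H}$ is forced to be one-dimensional (which requires choosing $P^\prime$ carefully, or falling back on Corollary \ref{xxsec3.18} when both sides are finite-dimensional), and invoking the correct prime-ring result to secure that every commuting linear mapping on the component nest algebras is proper, since this is precisely the hypothesis on which the entire reduction via Theorem \ref{xxsec3.4} hinges.
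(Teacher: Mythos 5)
Your overall strategy---decompose each nest algebra along a nontrivial projection of the nest, realize it as a triangular algebra, and invoke Corollary \ref{xxsec4.4}---is exactly the route the paper intends, and your verifications of loyalty of $M^\prime$, of the centers, and of hypothesis (1) via primeness and central closedness of nest algebras are sound. The genuine gap is the opening step ``pick $P^\prime\in\mathcal{N}^\prime\setminus\{0,I\}$ with both $P^\prime\mathbf{H}$ and $(I-P^\prime)\mathbf{H}$ of dimension $\geq 2$'': such a $P^\prime$ need not exist even when $\mathcal{N}^\prime$ is nontrivial and $\dim\mathbf{H}\geq 4$. For instance, take $\mathcal{N}^\prime=\{0,P^\prime,I\}$ with $\dim P^\prime\mathbf{H}=1$ on an infinite-dimensional $\mathbf{H}$, or $\mathcal{N}^\prime=\{0,P_1,P_2,I\}$ on $\mathbb{C}^4$ with $\operatorname{rank}P_1=1$ and $\operatorname{rank}P_2=3$. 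In these cases every admissible decomposition has one diagonal corner equal to $\mathbb{C}$, so hypothesis (2) of Corollary \ref{xxsec4.4} fails ($\mathcal{Z}(A^\prime)=A^\prime$ or $\mathcal{Z}(B^\prime)=B^\prime$), and the fallback you offer, Corollary \ref{xxsec3.18}, does not apply: a nest algebra of a \emph{nontrivial} nest is never a full matrix algebra, even in finite dimensions. You correctly flag this as the main obstacle, but it cannot be fixed by ``choosing $P^\prime$ carefully.''

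The repair, within the paper's own toolkit, is to bypass Corollary \ref{xxsec4.4} and apply Proposition \ref{xxsec4.2} directly. Its hypothesis (2) asks only that \emph{at least one} of $A^\prime,B^\prime$ be noncommutative, which holds for any nontrivial $P^\prime$ once $\dim\mathbf{H}\geq 3$ (a nest algebra on a space of dimension $\geq 2$ always contains a projection and a rank-one operator that fail to commute). Its hypothesis (1)---properness of every commuting trace of a bilinear map on $\mathcal{T}(\mathcal{N}^\prime)$---is Corollary \ref{xxsec3.16}, and in the degenerate decompositions above that statement rests on Proposition \ref{xxsec3.17} (designed precisely for a corner equal to $\mathcal{R}=\mathbb{C}$, with its condition (3) supplied by a rank-one $m_0$ and a nonscalar $b_0\in B^\prime$), or its left--right mirror when the one-dimensional corner is the lower one, rather than on Theorem \ref{xxsec3.4}. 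Finally, the truly trivial nest gives $\mathcal{T}(\mathcal{N})=\mathcal{B}(\mathbf{H})$, which is the content of Corollary \ref{xxsec4.10} and should be cited as such rather than folded into the triangular decomposition.
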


For the Lie isomorphisms of full matrix algebras, we have similar
characterizations.

\begin{corollary}\label{xxsec4.7}
Let $\mathcal{R}$ be a commutative domain with $\frac{1}{2}\in
\mathcal{R}$. If $\mathfrak{l}\colon {\mathcal
M}_n(\mathcal{R})\rightarrow {\mathcal M}_n(\mathcal{R})$ ($n\geq
3$) is a Lie isomorphism, then
$\mathfrak{l}=\mathfrak{m}+\mathfrak{n}$, where $\mathfrak{m}\colon
{\mathcal M}_n(\mathcal{R})\rightarrow {\mathcal M}_n(\mathcal{R})$
is an isomorphism or the negative of an anti-isomorphism and
$\mathfrak{n}\colon {\mathcal M}_n(\mathcal{R})\rightarrow {\mathcal
R}1$ is a linear mapping vanishing on each commutator.
\end{corollary}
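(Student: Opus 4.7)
The plan is to apply Proposition~\ref{xxsec4.2} after writing $\mathcal{M}_n(\mathcal{R})$ as a generalized matrix algebra of order $2$; the commuting-trace hypothesis (1) of Proposition~\ref{xxsec4.2} is supplied in every case by Corollary~\ref{xxsec3.18} (observing that a commutative domain containing $1/2$ is automatically $2$-torsionfree). Using Proposition~\ref{xxsec4.2} rather than Theorem~\ref{xxsec4.3} matters here because the natural block decomposition of $\mathcal{M}_3(\mathcal{R})$ necessarily has a $1\times 1$ (commutative) corner, which violates hypothesis (2) of Theorem~\ref{xxsec4.3} but is still permitted by Proposition~\ref{xxsec4.2}.

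Concretely, for $n\geq 4$ I would use the decomposition
$$
\mathcal{M}_n(\mathcal{R})=\left[\begin{array}{cc} \mathcal{M}_2(\mathcal{R}) & \mathcal{M}_{2\times(n-2)}(\mathcal{R}) \\ \mathcal{M}_{(n-2)\times 2}(\mathcal{R}) & \mathcal{M}_{n-2}(\mathcal{R}) \end{array}\right],
$$
so that both diagonal corners are noncommutative, while for $n=3$ I would instead write
$$
\mathcal{M}_3(\mathcal{R})=\left[\begin{array}{cc} \mathcal{R} & \mathcal{M}_{1\times 2}(\mathcal{R}) \\ \mathcal{M}_{2\times 1}(\mathcal{R}) & \mathcal{M}_2(\mathcal{R}) \end{array}\right],
$$
whose bottom-right corner is already noncommutative. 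In each case, hypothesis (2) of Proposition~\ref{xxsec4.2} (``at least one of $A,B$ and one of $A',B'$ is noncommutative'') clearly holds, since $\mathcal{G}=\mathcal{G}'$. The loyality of the off-diagonal bimodule $M'$ reduces to an elementary entrywise check: if $a\cdot c\cdot b=0$ for every matrix $c$ of the appropriate rectangular shape, then specializing $c$ to single-entry matrices yields $a_{ip}\,b_{qj}=0$ for all admissible indices, and the hypothesis that $\mathcal{R}$ is a domain forces $a=0$ or $b=0$.

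Proposition~\ref{xxsec4.2} then delivers the decomposition $\mathfrak{l}=\mathfrak{m}+\mathfrak{n}$, where $\mathfrak{m}$ is a homomorphism or the negative of an anti-homomorphism, $\mathfrak{m}$ is injective, and $\mathfrak{n}$ takes values in $\mathcal{Z}(\mathcal{M}_n(\mathcal{R}))$. Finally, since $\mathcal{Z}(\mathcal{M}_n(\mathcal{R}))=\mathcal{R}\cdot I_n\cong\mathcal{R}$, the algebra $\mathcal{M}_n(\mathcal{R})$ is central over $\mathcal{R}$, so the ``moreover'' clause of Proposition~\ref{xxsec4.2} upgrades $\mathfrak{m}$ to a surjection and therefore to an isomorphism or the negative of an anti-isomorphism, and simultaneously forces $\mathfrak{n}$ to land in $\mathcal{R}1$. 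There is no substantive obstacle beyond this bookkeeping: the only point that requires any attention is the $n=3$ case, where one must fall back on Proposition~\ref{xxsec4.2} (supplied with Corollary~\ref{xxsec3.18}) rather than the more restrictive Theorem~\ref{xxsec4.3}.
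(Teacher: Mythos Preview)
Your proof is correct and follows essentially the same route as the paper: write $\mathcal{M}_n(\mathcal{R})$ as a generalized matrix algebra, feed Corollary~\ref{xxsec3.18} into hypothesis~(1) of Proposition~\ref{xxsec4.2}, verify noncommutativity of a corner and loyality of the off-diagonal bimodule, and then invoke centrality of $\mathcal{M}_n(\mathcal{R})$ over $\mathcal{R}$ to upgrade $\mathfrak{m}$ to a bijection. The only difference is cosmetic: the paper uses the single decomposition
\[
\mathcal{M}_n(\mathcal{R})=\left[\begin{array}{cc}\mathcal{R} & \mathcal{M}_{1\times(n-1)}(\mathcal{R})\\ \mathcal{M}_{(n-1)\times 1}(\mathcal{R}) & \mathcal{M}_{n-1}(\mathcal{R})\end{array}\right]
\]
uniformly for all $n\geq 3$ (the corner $\mathcal{M}_{n-1}(\mathcal{R})$ is already noncommutative once $n\geq 3$), so your case split $n=3$ versus $n\geq 4$ is unnecessary---though not wrong.
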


\begin{proof}
We write $\mathcal{M}_n(\mathcal{R})=\left[
\begin{array}
[c]{cc}%
\mathcal{R} & {\mathcal M}_{1\times (n-1)}(\mathcal{R})\\
\mathcal{M}_{(n-1)\times 1}(\mathcal{R}) & \mathcal{M}_{n-1}(\mathcal{R})\\
\end{array}
\right]$. Corollary \ref{xxsec3.16} shows that each commuting trace
of arbitrary bilinear mapping on ${\mathcal M}_n(\mathcal{R})$ is
proper. Moreover, ${\mathcal M}_{n-1}(\mathcal{R})$ is
noncommutative and ${\mathcal M}_{1\times (n-1)}(\mathcal{R})$ is a
loyal $(\mathcal{R},{\mathcal M}_{n-1}(\mathcal{R}))$-bimodule.
Hence Proposition \ref{xxsec4.2} implies the conclusion.
\end{proof}

\begin{corollary}\label{xxsec4.8}
Let $\mathcal{R}$ be a commutative domain with $\frac{1}{2}\in
\mathcal{R}$, $V$ be an $\mathcal{R}$-linear space and
$B(\mathcal{R}, V, \gamma)$ be the inflated algebra of $\mathcal{R}$
along $V$. If $\mathfrak{l}\colon B(\mathcal{R}, V,
\gamma)\longrightarrow B(\mathcal{R}, V, \gamma)$ is a Lie
isomorphism, then $\mathfrak{l}=\mathfrak{m}+\mathfrak{n}$, where
$\mathfrak{m}\colon B(\mathcal{R}, V, \gamma)\longrightarrow
B(\mathcal{R}, V, \gamma)$ is an isomorphism or the negative of an
anti-isomorphism and $\mathfrak{n}\colon B(\mathcal{R}, V,
\gamma)\longrightarrow {\mathcal R}1$ is a linear mapping vanishing
on each commutator.
\end{corollary}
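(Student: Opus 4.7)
The plan is to mimic the argument used for Corollary \ref{xxsec4.7} (full matrix algebras), replacing the role of Corollary \ref{xxsec3.16} by Corollary \ref{xxsec3.19} and then applying Proposition \ref{xxsec4.2} rather than the full strength of Theorem \ref{xxsec4.3}. The point is that Proposition \ref{xxsec4.2} only requires that \emph{every commuting trace of a bilinear mapping on the target} be proper, together with noncommutativity on each side and loyalty of the $(A',B')$-bimodule $M'$; Corollary \ref{xxsec3.19} already supplies the commuting trace hypothesis, so the remaining work is purely structural.

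The first step is to realize the inflated algebra $B(\mathcal{R},V,\gamma)$ as a generalized matrix algebra
\[
B(\mathcal{R},V,\gamma)=\left[\begin{array}{cc} A & M\\ N & B\end{array}\right]
\]
by isolating a nontrivial idempotent $e\in B(\mathcal{R},V,\gamma)$ (the inflated algebras considered in \cite{LiWei, XiaoWei1} come equipped with such a decomposition in a canonical way) and setting $A=eB(\mathcal{R},V,\gamma)e$, $B=(1-e)B(\mathcal{R},V,\gamma)(1-e)$, $M=eB(\mathcal{R},V,\gamma)(1-e)$ and $N=(1-e)B(\mathcal{R},V,\gamma)e$. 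One then checks, using the explicit multiplication rule of the inflation, that $M$ is faithful as a left $A$-module and as a right $B$-module, that at least one of $A,B$ is noncommutative (this is where the hypothesis that $V$ is nontrivial and $\gamma$ is nondegenerate is used), and that $M$ is loyal as an $(A,B)$-bimodule in the sense of the paper. Finally, $B(\mathcal{R},V,\gamma)$ is central over $\mathcal{R}$, so the surjectivity clause of Proposition \ref{xxsec4.2} will apply.

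With these structural facts in hand, Corollary \ref{xxsec3.19} tells us that every commuting trace of an arbitrary bilinear mapping on $B(\mathcal{R},V,\gamma)$ is proper, which is hypothesis (1) of Proposition \ref{xxsec4.2}. Hypotheses (2) and (3) of that proposition are precisely the noncommutativity and loyalty just verified. Applying Proposition \ref{xxsec4.2} to the Lie isomorphism $\mathfrak{l}\colon B(\mathcal{R},V,\gamma)\longrightarrow B(\mathcal{R},V,\gamma)$ produces the decomposition $\mathfrak{l}=\mathfrak{m}+\mathfrak{n}$ with $\mathfrak{m}$ an injective homomorphism (or the negative of an injective anti-homomorphism) and $\mathfrak{n}$ a linear mapping into the center vanishing on commutators. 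Since $B(\mathcal{R},V,\gamma)$ is central over $\mathcal{R}$, its center is $\mathcal{R}1$, and the moreover part of Proposition \ref{xxsec4.2} then yields that $\mathfrak{m}$ is surjective, so $\mathfrak{m}$ is an isomorphism (or the negative of an anti-isomorphism) and $\mathfrak{n}$ takes values in $\mathcal{R}1$.

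The main obstacle I expect is the first step: writing the inflated algebra as a generalized matrix algebra in such a way that the bimodule $M$ is \emph{loyal} and at least one of the corners $A,B$ is noncommutative. The verification of loyalty is the subtle part, because loyalty is strictly stronger than faithfulness on either side; it requires analyzing the product $aMb=0$ using the explicit form of $\gamma$. Once this structural step is carried out, the remainder of the proof is a direct invocation of Corollary \ref{xxsec3.19} and Proposition \ref{xxsec4.2}, exactly paralleling the proof of Corollary \ref{xxsec4.7}.
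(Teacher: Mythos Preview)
Your proposal is correct and follows essentially the same approach as the paper: the paper gives no explicit proof for this corollary, treating it as immediate from Corollary \ref{xxsec3.19} and Proposition \ref{xxsec4.2} in direct analogy with the proof of Corollary \ref{xxsec4.7}, and that is precisely what you do. The structural verification (writing the inflated algebra as a generalized matrix algebra with loyal $M$, noncommutative corner, and center $\mathcal{R}1$) that you identify as the ``main obstacle'' is exactly the routine check the paper leaves to the reader via the references \cite{LiWei, XiaoWei1}.
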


Let us consider the Lie isomorphisms of several unital algebras with
nontrivial idempotents.

\begin{corollary}\label{xxsec4.9}
Let $\mathcal{A}$ be a unital prime algebra with nontrivial
idempotent and ${\mathfrak l}\colon \mathcal{A}\longrightarrow
\mathcal{A}$ be a Lie isomorphism. Then every Lie isomorphism is of
the standard form $(\spadesuit)$.
\end{corollary}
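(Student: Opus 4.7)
My plan is to reduce Corollary \ref{xxsec4.9} to the Peirce decomposition associated to the nontrivial idempotent and then invoke the machinery of Proposition \ref{xxsec4.2} (equivalently, Theorem \ref{xxsec4.3}). Set $f = 1-e$ and realize $\mathcal{A}$ as the generalized matrix algebra
$$
\mathcal{A} = \left[\begin{array}{cc} e\mathcal{A}e & e\mathcal{A}f \\ f\mathcal{A}e & f\mathcal{A}f \end{array}\right],
$$
so that the Lie isomorphism ${\mathfrak l}\colon \mathcal{A}\longrightarrow \mathcal{A}$ becomes a Lie isomorphism between two copies of this generalized matrix algebra, which is exactly the framework of Theorem \ref{xxsec4.3}.

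Next I would verify the structural hypotheses needed. First, because $\mathcal{A}$ is prime, the corner algebras $e\mathcal{A}e$ and $f\mathcal{A}f$ are themselves prime unital algebras. By Bre\v{s}ar's theorem on commuting additive maps on prime rings (cited in the proof of Corollary \ref{xxsec3.20} via \cite[Theorem 3.2]{Bresar2}), every commuting linear mapping on $e\mathcal{A}e$ or on $f\mathcal{A}f$ is proper. Second, the bimodule $M = e\mathcal{A}f$ is loyal: if $(eae)\,e\mathcal{A}f\,(fbf) = 0$ then $(eae)\mathcal{A}(fbf) = 0$, which by primeness of $\mathcal{A}$ forces $eae = 0$ or $fbf = 0$. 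The same argument shows $M$ is faithful both as a left $e\mathcal{A}e$-module and as a right $f\mathcal{A}f$-module. Third, the noncommutativity hypothesis (condition (3) of Theorem \ref{xxsec4.3}) is exactly the situation outside of which the problem is vacuous or trivial, and the assumption that the Peirce-corner centers equal the projected center of $\mathcal{A}$ and differ from the corners themselves (condition (2)) is the prime-algebra analogue of the hypothesis already used in Corollary \ref{xxsec3.20}.

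With these observations, Corollary \ref{xxsec3.20} applies directly and tells us that every commuting trace of an arbitrary bilinear mapping on $\mathcal{A}$ is proper. This is precisely hypothesis (1) of Proposition \ref{xxsec4.2} (taking $\mathcal{G} = \mathcal{G}' = \mathcal{A}$ and $M' = e\mathcal{A}f$, which we have already shown to be loyal). Applying Proposition \ref{xxsec4.2} we conclude that
$$
{\mathfrak l} = {\mathfrak m} + {\mathfrak n},
$$
where ${\mathfrak m}\colon \mathcal{A}\longrightarrow \mathcal{A}$ is either a homomorphism or the negative of an anti-homomorphism, ${\mathfrak m}$ is injective, and ${\mathfrak n}\colon \mathcal{A}\longrightarrow \mathcal{Z(A)}$ is linear and vanishes on every commutator. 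Finally, when $\mathcal{A}$ is central over $\mathcal{R}$, the surjectivity clause of Proposition \ref{xxsec4.2} makes ${\mathfrak m}$ an (anti-)isomorphism, yielding the standard form $(\spadesuit)$.

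The main obstacle, as I see it, is the bookkeeping involved in guaranteeing that the prime algebra $\mathcal{A}$ and its Peirce corners satisfy the center and non-degeneracy conditions (2) of Theorem \ref{xxsec4.3}; this is where one must either assume or verify $e\mathcal{Z(A)}e = \mathcal{Z}(e\mathcal{A}e) \neq e\mathcal{A}e$ and the analogous statement for $f\mathcal{A}f$, just as was done explicitly in the hypotheses of Corollary \ref{xxsec3.20}. Once these conditions are in place, the remaining deduction is a direct pass through the already established Theorem \ref{xxsec3.4} and Proposition \ref{xxsec4.2}, with no further calculation required.
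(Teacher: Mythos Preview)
Your approach is exactly the one the paper intends: the corollary is stated without proof, and the implicit argument is precisely to realize $\mathcal{A}$ via its Peirce decomposition, invoke Corollary~\ref{xxsec3.20} to guarantee that every commuting trace of a bilinear map on $\mathcal{A}$ is proper, and then feed this into Proposition~\ref{xxsec4.2}. You have also correctly flagged the genuine issue, namely that the bare hypotheses of Corollary~\ref{xxsec4.9} do not literally supply the center conditions $e\mathcal{Z(A)}e=\mathcal{Z}(e\mathcal{A}e)\neq e\mathcal{A}e$, $f\mathcal{Z(A)}f=\mathcal{Z}(f\mathcal{A}f)\neq f\mathcal{A}f$, the noncommutativity of a corner, or centrality of $\mathcal{A}$ over $\mathcal{R}$ needed for surjectivity of $\mathfrak{m}$; these must be read as implicit in the statement, just as they appear explicitly in Corollary~\ref{xxsec3.20}.
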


\begin{corollary}\label{xxsec4.10}
Let $X$ be a Banach space over the real or complex field
$\mathbb{F}$, $\mathcal{B}(X)$ be the algebra of all bounded linear
operators on $X$. Then every Lie isomorphism has the standard form
$(\spadesuit)$.
\end{corollary}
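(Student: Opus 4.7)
The plan is to mimic the proof of Corollary \ref{xxsec3.21} verbatim in structure: exploit that $\mathcal{B}(X)$ is a unital centrally closed prime $\mathbb{F}$-algebra with $\mathcal{Z}(\mathcal{B}(X)) = \mathbb{F}1$, and dichotomize on whether $X$ is finite-dimensional, reducing each case to an earlier corollary of Theorem \ref{xxsec4.3}.

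First I would record the standard fact that $\mathcal{B}(X)$ is prime with center $\mathbb{F}1$. If $X$ is infinite-dimensional, then $\mathcal{B}(X)$ carries an abundant supply of nontrivial idempotents --- for instance, any continuous projection $e$ onto a closed infinite-dimensional subspace with infinite-dimensional closed complement. Since $\mathcal{B}(X)$ is a unital prime algebra with a nontrivial idempotent, Corollary \ref{xxsec4.9} applies and yields the standard form $(\spadesuit)$ for $\mathfrak{l}$.

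If $\dim X = n$ is finite, then $\mathcal{B}(X) \cong \mathcal{M}_n(\mathbb{F})$. For $n \geq 3$, Corollary \ref{xxsec4.7} applies directly to deliver the decomposition $\mathfrak{l} = \mathfrak{m} + \mathfrak{n}$. For $n = 2$, $\mathcal{M}_2(\mathbb{F})$ is a unital prime algebra containing the nontrivial idempotent $E_{11}$, so Corollary \ref{xxsec4.9} again applies. For $n = 1$, $\mathcal{B}(X) = \mathbb{F}$ is commutative, the Lie bracket is identically zero, and any $\mathbb{F}$-linear bijection $\mathfrak{l}$ may be written as $\mathfrak{m} + \mathfrak{n}$ with $\mathfrak{m} = 0$ and $\mathfrak{n} = \mathfrak{l}$, automatically vanishing on commutators.

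There is no real obstacle here: the work has been done in Theorem \ref{xxsec4.3} and its corollaries, so the proof of Corollary \ref{xxsec4.10} is essentially a dispatch into the correct case. The only item worth checking carefully is that in the infinite-dimensional case one picks the idempotent so that both corner algebras $e\mathcal{B}(X)e$ and $(1-e)\mathcal{B}(X)(1-e)$ remain noncommutative and have the expected central structure, which is guaranteed by the choice above. Consequently the proof reduces to one or two lines of case distinction, parallel to the proof of Corollary \ref{xxsec3.21}.
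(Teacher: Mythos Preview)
Your approach is correct and matches the paper's intended argument: Corollary \ref{xxsec4.10} is stated without proof precisely because it follows from Corollaries \ref{xxsec4.7} and \ref{xxsec4.9} by the same case distinction used in the proof of Corollary \ref{xxsec3.21}.

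Two minor points are worth tightening. First, in the $n=1$ case your choice $\mathfrak{m}=0$ does not fit $(\spadesuit)$, since $\mathfrak{m}$ must be an isomorphism or the negative of an anti-isomorphism; take instead $\mathfrak{m}=\mathrm{id}$ and $\mathfrak{n}=\mathfrak{l}-\mathrm{id}$. Second, for $n=2$ your appeal to Corollary \ref{xxsec4.9} is formally legitimate as stated, but note that the Peirce corners $e\mathcal{M}_2(\mathbb{F})e$ and $f\mathcal{M}_2(\mathbb{F})f$ are both one-dimensional, so the hypotheses of Theorem \ref{xxsec4.3} fail there and Corollary \ref{xxsec4.9} cannot be derived for this case from the paper's machinery alone. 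The honest route, paralleling the proof of Corollary \ref{xxsec3.18}, is to invoke an external result (e.g.\ \cite{BresarSemrl}) for $\mathcal{M}_2(\mathbb{F})$.
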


\section{Potential Topics for Further Research}\label{xxsec5}

Although the main goal of the current article is to consider
commuting traces and Lie isomorphisms on generalized matrix
algebras, there are more interesting mappings related to our current
work on generalized matrix algebras. These mappings are still
considerable interest and will draw more people's our attention. In
this section we will propose several potential topics for future
further research.

Let $\mathcal{R}$ be a commutative ring with identity, $\mathcal{A}$
be a unital algebra over $\mathcal{R}$ and $\mathcal{Z(A)}$ be the
center of $\mathcal{A}$. Recall that an $\mathcal{R}$-linear mapping
${\mathfrak f}: \mathcal{A}\longrightarrow \mathcal{A}$ is said to
be \textit{centralizing} if $[{\mathfrak f}(a), a]\in
\mathcal{Z(A)}$ for all $a\in \mathcal{A}$. Let $n$ be a positive
integer and $\mathfrak{q}\colon \mathcal{A}^n\longrightarrow
\mathcal{A}$ be an $n$-linear mapping. The mapping ${\mathfrak
T}_{\mathfrak q}\colon \mathcal{A}\longrightarrow \mathcal{A}$
defined by ${\mathfrak T}_{\mathfrak q}(a)={\mathfrak q}(a, a,
\cdots, a)$ is called a \textit{trace} of ${\mathfrak q}$. We say
that a centralizing trace ${\mathfrak T}_{\mathfrak q}$ is
\textit{proper} if it can be written as
$$
{\mathfrak T}_{\mathfrak q}(a)=\sum_{i=0}^n\mu_i(a)a^{n-i},
\hspace{8pt} \forall a\in \mathcal{A},
$$
where $\mu_i(0\leq i\leq n)$ is a mapping from $\mathcal{A}$ into
$\mathcal{Z(A)}$ and every $\mu_i(0\leq i\leq n)$ is in fact a trace
of an $i$-linear mapping ${\mathfrak q}_i$ from $\mathcal{A}^i$ into
$\mathcal{Z(A)}$. Let $n=1$ and ${\mathfrak f}\colon
A\longrightarrow A$ be an $\mathcal{R}$-linear mapping. In this
case, an arbitrary trace ${\mathfrak T}_{\mathfrak f}$ of
${\mathfrak f}$ exactly equals to itself. Moreover, if a
centralizing trace ${\mathfrak T}_{\mathfrak f}$ of ${\mathfrak f}$
is proper, then it has the form
$$
{\mathfrak T}_{\mathfrak f}(a)=z a+\eta(a), \hspace{8pt} \forall
a\in \mathcal{A},
$$
where $z\in \mathcal{Z(A)}$ and $\eta$ is an $\mathcal{R}$-linear
mapping from $A$ into $\mathcal{Z(A)}$. Let us see the case of
$n=2$. Suppose that ${\mathfrak g}\colon \mathcal{A}\times
\mathcal{A}\longrightarrow \mathcal{A}$ is an $\mathcal{R}$-bilinear
mapping. If a centralizing trace ${\mathfrak T}_{\mathfrak g}$ of
${\mathfrak g}$ is proper, then it is of the form
$$
{\mathfrak T}_{\mathfrak g}(a)=z a^2+\mu(a)a+\nu(a), \hspace{8pt}
\forall a\in \mathcal{A},
$$
where $z\in \mathcal{Z(A)}$, $\mu$ is an $\mathcal{R}$-linear
mapping from $A$ into $\mathcal{Z(A)}$ and $\nu$ is a trace of some
bilinear mapping. Bre\v{s}ar started the study
of commuting and centralizing traces of multilinear mappings in his
series of works \cite{Bresar0, Bresar1, Bresar2, Bresar3}, where he
investigated the structure of centralizing traces of (bi-)linear
mappings on prime rings. It has turned out that in certain rings, in
particular, prime rings of characteristic different from $2$ and
$3$, every centralizing trace of a biadditive mapping is commuting.
Moreover, every centralizing mapping of a prime ring of
characteristic not $2$ is of the proper form and is actually
commuting. Lee et al further generalized Bre\v{s}ar's results by
showing that each commuting trace of an arbitrary multilinear
mapping on a prime ring also has the proper form
\cite{LeeWongLinWang}. An exciting discovery is that every
centralizing trace of arbitrary bilinear mapping on triangular
algebras is commuting in some additional conditions.

\begin{theorem}\cite[Theorem 3.4]{XiaoWei2}\label{xxsec5.1}
Let $\mathcal{T}=\left[
\begin{array}
[c]{cc}%
A & M\\
O & B\\
\end{array}
\right]$ be a $2$-torsionfree triangular algebras over a commutative
ring $\mathcal{R}$ and $\mathfrak{q}\colon \mathcal{T}\times
\mathcal{T}\longrightarrow \mathcal{T}$ be an $\mathcal{R}$-bilinear
mapping. If
\begin{enumerate}
\item[(1)] every commuting linear mapping on $A$ or $B$ is proper,
\item[(2)] $\pi_A({\mathcal Z}(\mathcal{T}))={\mathcal Z}(A) \neq A $ and
$\pi_B({\mathcal Z}(\mathcal{T}))={\mathcal Z}(B)\neq B$,
\item[(3)] $M$ is loyal,
\end{enumerate}
then every centralizing trace ${\mathfrak T}_{\mathfrak q}:
\mathcal{T}\longrightarrow \mathcal{T}$ of ${\mathfrak q}$ is
proper. Moreover, each centralizing trace $\mathfrak{T_q}$ of
$\mathfrak{q}$ is commuting.
\end{theorem}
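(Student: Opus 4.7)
The plan is to reduce to Corollary \ref{xxsec3.14}: I will prove that under the stated hypotheses every centralizing trace of a bilinear mapping on $\mathcal{T}$ is in fact commuting, after which the proper form is immediate. Set $\Delta(x):=[\mathfrak{T}_\mathfrak{q}(x),x]$, so the hypothesis reads $\Delta(x)\in\mathcal{Z}(\mathcal{T})$ for every $x\in\mathcal{T}$. With $x=\left[\begin{smallmatrix}a_1 & a_2\\ 0 & a_4\end{smallmatrix}\right]$ and $\mathfrak{T}_\mathfrak{q}(x)=\left[\begin{smallmatrix}F & G\\ 0 & K\end{smallmatrix}\right]$ (where, as in the proof of Theorem \ref{xxsec3.4}, $F,G,K$ decompose into the bilinear pieces $f_{ij},g_{ij},k_{ij}$), the commutator is
$$
\Delta(x)=\left[\begin{array}{cc}[F,a_1] & Fa_2+Ga_4-a_1G-a_2K\\ 0 & [K,a_4]\end{array}\right].
$$
Centrality in $\mathcal{T}$ forces three conditions: (i) $Fa_2+Ga_4-a_1G-a_2K=0$; (ii) $[F,a_1]\in\pi_A(\mathcal{Z}(\mathcal{T}))=\mathcal{Z}(A)$ and $[K,a_4]\in\pi_B(\mathcal{Z}(\mathcal{T}))=\mathcal{Z}(B)$; and (iii) the compatibility $[F,a_1]\cdot m=m\cdot[K,a_4]$ for all $m\in M$.

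Condition (i) is exactly the off-diagonal vanishing relation extracted from $(\bigstar)$ that drives the whole argument of Theorem \ref{xxsec3.4}. Consequently, the triangular analogues of Lemma \ref{xxsec3.6} and of Lemmas \ref{xxsec3.10}--\ref{xxsec3.13} (explicit forms of $G$, of $f_{12},k_{24},f_{14},k_{14}$, and of the $g_{ij}$) go through unchanged, so the mixed and off-diagonal pieces of $\mathfrak{T}_\mathfrak{q}$ already have the required proper shape. The essential new task is to upgrade (ii)--(iii) to $[F,a_1]=0=[K,a_4]$. I would begin with the slice $a_2=a_4=0$: here $K(a_1,0,0)=k_{11}(a_1,a_1)$ commutes with $a_4=0$, so (iii) reduces to $[f_{11}(a_1,a_1),a_1]\cdot m=0$ for every $m\in M$, and loyalty of $M$ (in particular faithfulness as a left $A$-module) forces $[f_{11}(a_1,a_1),a_1]=0$; the mirror choice $a_1=a_2=0$ yields $[k_{44}(a_4,a_4),a_4]=0$.

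The remaining work is to eliminate the contributions to $[F,a_1]$ and $[K,a_4]$ coming from the mixed pieces $f_{12},f_{14},f_{22},f_{24}$ (and their $k$-counterparts). Substituting the explicit forms of $F,K$ obtained from (i) back into the compatibility (iii), each residual central obstruction appears sandwiched between elements of $M$, and loyalty of $M$ together with Lemma \ref{xxsec3.1} and the hypotheses $\mathcal{Z}(A)\neq A$, $\mathcal{Z}(B)\neq B$ collapses each obstruction, one summand at a time. The main obstacle is bookkeeping: because the two diagonal central residues are coupled only through the $M$-compatibility (iii), each vanishing must be established simultaneously on both blocks, and loyalty of $M$ is precisely what links them. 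Once $\Delta\equiv 0$ has been established, $\mathfrak{T}_\mathfrak{q}$ is a commuting trace and Corollary \ref{xxsec3.14} supplies its proper form, completing the proof.
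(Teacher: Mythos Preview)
The present paper does not actually prove Theorem \ref{xxsec5.1}; it is quoted without proof from \cite{XiaoWei2} in Section \ref{xxsec5} as motivation for further work. So there is no argument here to compare against, and I can only assess your plan on its own merits.

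Your overall strategy---show that every centralizing trace is already commuting and then invoke Corollary \ref{xxsec3.14}---is the natural one and is sound. The matrix form of $\Delta(x)$, the observation that the off-diagonal relation (i) coincides verbatim with the commuting case, and the slice arguments at $a_4=0$ and $a_1=0$ (where one side of the compatibility (iii) collapses and faithfulness of $M$ kills the other) are all correct. Two points deserve comment. First, invoking Lemmas \ref{xxsec3.10}--\ref{xxsec3.13} is unnecessary for your plan: once $\Delta\equiv0$ is established, Corollary \ref{xxsec3.14} delivers the entire proper form, so there is no need to re-derive the shape of the mixed pieces along the way.

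Second, the step you call ``bookkeeping'' hides one genuine lemma. After the pure slices, three homogeneous obstructions survive, of degrees $(2,0,1)$, $(1,1,1)$, $(1,0,2)$ in $(a_1,a_2,a_4)$; for instance $[f_{44}(a_4,a_4),a_1]$ is central in $A$ and is $\varphi$-paired with $[k_{14}(a_1,a_4),a_4]$. Knowing only that $[c,a_1]\in\mathcal{Z}(A)$ for all $a_1$ does not by itself force $c\in\mathcal{Z}(A)$; you need the auxiliary fact that under hypotheses (2)--(3) every inner derivation $A\to\mathcal{Z}(A)$ vanishes. This does follow from Lemma \ref{xxsec3.1}: expand $[c,a_1a_1']$ as a derivation, commute with $b_1$, and use $\mathcal{Z}(A)\neq A$ to produce a nonzero commutator annihilating the central value. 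Once this gives $f_{24},f_{44}\in\mathcal{Z}(A)$ and (symmetrically) $k_{11},k_{12}\in\mathcal{Z}(B)$, the $\varphi$-link immediately kills the remaining pieces (e.g.\ $[k_{11},a_4]=0$ forces $[f_{14}(a_1,a_4),a_1]=0$), and $\Delta\equiv0$ follows. With that lemma made explicit, your argument is complete.
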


It is natural to formulate the following question

\begin{question}\label{xxsec5.2}
Let $\mathcal{T}=\left[
\begin{array}
[c]{cc}%
A & M\\
O & B\\
\end{array}
\right]$ be a $2$-torsionfree triangular algebra over a commutative
ring $\mathcal{R}$ and ${\mathfrak q}\colon \mathcal{T}\times
\mathcal{T} \times \cdots \times \mathcal{T}\longrightarrow
\mathcal{T}$ be an $n$-linear mapping. Suppose that the following
conditions are satisfied
\begin{enumerate}
\item[{\rm(1)}] each commuting linear mapping on $A$ or $B$ is proper;
\item[{\rm(2)}] $\pi_A(\mathcal{Z(G)})=\mathcal{Z}(A)\neq A$ and
$\pi_B(\mathcal{Z(G)})=\mathcal{Z}(B)\neq B$;
\item[{\rm(3)}] $M$ is loyal.
\end{enumerate}
Is any centralizing trace ${\mathfrak T}_{\mathfrak q}:
\mathcal{T}\longrightarrow \mathcal{T}$ of ${\mathfrak q}$ proper ?
Furthermore, what can we say about the centralizing traces of
multilinear mappings on a generalized matrix algebra
$\mathcal{G}=\left[
\begin{array}
[c]{cc}%
A & M\\
N & B\\
\end{array}
\right]$ ?
\end{question}

Let $\mathcal{R}$ be a commutative ring with identity, $\mathcal{A}$
and $\mathcal{B}$ be associative $\mathcal{R}$-algebras. We define a
\textit{Lie triple isomorphism} from $\mathcal{A}$ into
$\mathcal{B}$ to be an $\mathcal{R}$-linear bijective mapping
${\mathfrak l}$ satisfying the condition
$$
{\mathfrak l}([[a, b],c])=[[{\mathfrak l}(a), {\mathfrak
l}(b)],\mathfrak{l}(c)], \hspace{8pt} \forall a, b, c\in
\mathcal{A}.
$$
Obviously, every Lie isomorphism is a Lie triple isomorphism. The
converse is, in general, not true. In \cite{XiaoWei2} we apply
Theorem \ref{xxsec5.1} to the study of Lie triple isomorphisms on
triangular algebras. It is shown that every Lie triple isomorphism
between triangular algebras also has an \textit{approximate standard
decomposition expression} under some additional conditions. That is

\begin{theorem}\cite[Theorem 4.3]{XiaoWei2}\label{xxsec5.3}
Let $\mathcal{T}=\left[
\begin{array}
[c]{cc}%
A & M\\
O & B\\
\end{array}
\right]$ and $\mathcal{T}^\prime=\left[
\begin{array}
[c]{cc}%
A^\prime & M^\prime\\
O & B^\prime\\
\end{array}
\right]$ be triangular algebras over $\mathcal{R}$ with
$1/2\in\mathcal{R}$. Let $\mathfrak{l}\colon
\mathcal{T}\longrightarrow\mathcal{T'}$ be a Lie triple isomorphism.
If
\begin{enumerate}
\item[{\rm(1)}] every commuting linear mapping on $A'$ or $B'$ is proper,
\item[{\rm(2)}] $\pi_{A'}(\mathcal{Z(T^\prime)})={\mathcal Z}(A')\neq A'$ and
$\pi_{B'}(\mathcal{Z(T^\prime)})={\mathcal Z}(B')\neq B'$,
\item[{\rm(3)}] either $A$ or $B$ is noncommutative,
\item[{\rm(4)}] $M'$ is loyal,
\end{enumerate}
then $\mathfrak{l}=\pm\mathfrak{m}+\mathfrak{n}$, where
$\mathfrak{m}: \mathcal{T}\longrightarrow\mathcal{T^\prime}$
is a Jordan homomorphism, $\mathfrak{m}$ is injective, and
$\mathfrak{n}\colon \mathcal{T}\longrightarrow {\mathcal
Z}(\mathcal{T}^\prime)$ is a linear mapping vanishing on each second
commutator. Moreover, if $\mathcal{T^\prime}$ is central over
$\mathcal{R}$, then $\mathfrak{m}$ is surjective.
\end{theorem}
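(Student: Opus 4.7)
The plan is to mirror Proposition \ref{xxsec4.2}, with two essential substitutions: invoke the centralizing-trace Theorem \ref{xxsec5.1} in place of the commuting-trace Theorem \ref{xxsec3.4}, and extract a Jordan homomorphism (preserving squares) rather than an (anti-)homomorphism (preserving full products), since Lie triple preservation is strictly weaker than Lie preservation. First, observe that $[[x,x^2],y]=0$ holds in $\mathcal{T}$ for every $x,y$, so the Lie triple property of $\mathfrak{l}$ gives $[[\mathfrak{l}(x),\mathfrak{l}(x^2)],v]=0$ for every $v\in \mathcal{T}'$, and hence $[\mathfrak{l}(x),\mathfrak{l}(x^2)]\in \mathcal{Z}(\mathcal{T}')$. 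Consequently $\mathfrak{T}(y):=\mathfrak{l}\bigl(\mathfrak{l}^{-1}(y)^2\bigr)$ is a centralizing (in fact commuting) trace of the $\mathcal{R}$-bilinear map $(y,z)\mapsto \mathfrak{l}\bigl(\mathfrak{l}^{-1}(y)\mathfrak{l}^{-1}(z)\bigr)$ on $\mathcal{T}'$. Applying Theorem \ref{xxsec5.1} to $\mathcal{T}'$ under the hypotheses (1), (2), (4), one obtains the proper form
\begin{equation*}
\mathfrak{l}(x^2) = \lambda\,\mathfrak{l}(x)^2 + \mu(x)\,\mathfrak{l}(x) + \nu(x),
\end{equation*}
with $\lambda\in \mathcal{Z}(\mathcal{T}')$, $\mu\colon \mathcal{T}\to \mathcal{Z}(\mathcal{T}')$ linear, and $\nu\colon \mathcal{T}\to \mathcal{Z}(\mathcal{T}')$ a trace of some bilinear map.

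Next, define $\mathfrak{m}(x):=\lambda\,\mathfrak{l}(x)+\tfrac{1}{2}\mu(x)$. Squaring and substituting into the formula above yields $\mathfrak{m}(x^2)-\mathfrak{m}(x)^2\in \mathcal{Z}(\mathcal{T}')$; linearizing gives $\mathfrak{m}(x\circ y)\equiv \mathfrak{m}(x)\circ \mathfrak{m}(y) \pmod{\mathcal{Z}(\mathcal{T}')}$, so $\mathfrak{m}$ is a Jordan homomorphism modulo the center. Since $\mu(x)$ is central, $[\mathfrak{m}(x),\mathfrak{m}(y)]=\lambda^2[\mathfrak{l}(x),\mathfrak{l}(y)]$ and
\begin{equation*}
[[\mathfrak{m}(x),\mathfrak{m}(y)],\mathfrak{m}(z)] = \lambda^3[[\mathfrak{l}(x),\mathfrak{l}(y)],\mathfrak{l}(z)] = \lambda^3\,\mathfrak{l}([[x,y],z]),
\end{equation*}
so $\lambda^2\,\mathfrak{m}([[x,y],z])\equiv [[\mathfrak{m}(x),\mathfrak{m}(y)],\mathfrak{m}(z)] \pmod{\mathcal{Z}(\mathcal{T}')}$. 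On the other hand, classically $[[x,y],z]=\{x,y,z\}-\{y,x,z\}$ where $\{a,b,c\}:=abc+cba$ is the Jordan triple product, expressible purely via Jordan products; hence the Jordan congruence propagates to $\mathfrak{m}([[x,y],z])\equiv [[\mathfrak{m}(x),\mathfrak{m}(y)],\mathfrak{m}(z)] \pmod{\mathcal{Z}(\mathcal{T}')}$. Subtracting, $(\lambda^2-1)\,\mathfrak{m}([[x,y],z])\in \mathcal{Z}(\mathcal{T}')$; because $\mathfrak{m}$ differs from $\lambda\mathfrak{l}$ by a central term and $\mathfrak{l}$ is surjective, this becomes $(\lambda^2-1)\lambda\,[[u,v],w]\in \mathcal{Z}(\mathcal{T}')$ for every $u,v,w\in \mathcal{T}'$. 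Commuting this with an arbitrary element of $\mathcal{T}'$ and using the loyalty of $M'$, together with Lemmas \ref{xxsec3.1}--\ref{xxsec3.2} and the noncommutativity of $A'$ and $B'$ forced by hypothesis (2), I would conclude $(\lambda^2-1)\lambda=0$; a separate argument that parallels the $\lambda\neq 0$ step of Proposition \ref{xxsec4.2}---now run through nested triple brackets and using that $\mathfrak{l}$ maps the center of $\mathcal{T}$ bijectively onto $\mathcal{Z}(\mathcal{T}')$ (true because the ``Lie triple center'' coincides with $\mathcal{Z}$ in triangular algebras with loyal bimodule)---rules out $\lambda=0$, yielding $\lambda=\pm 1$.

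With $\lambda=\pm 1$ in hand, set $\mathfrak{n}(x):=\mathfrak{l}(x)\mp \mathfrak{m}(x)$, a linear map into $\mathcal{Z}(\mathcal{T}')$. A careful bookkeeping of the central corrections $\mu,\nu$ promotes the congruence $\mathfrak{m}(x^2)\equiv \mathfrak{m}(x)^2 \pmod{\mathcal{Z}(\mathcal{T}')}$ into the equality $\mathfrak{m}(x^2)=\mathfrak{m}(x)^2$, making $\mathfrak{m}$ a genuine Jordan homomorphism. Since Jordan homomorphisms preserve the triple commutator $[[x,y],z]$ (as verified via $\{x,y,z\}-\{y,x,z\}$), comparing with $\mathfrak{l}([[x,y],z])=[[\mathfrak{l}(x),\mathfrak{l}(y)],\mathfrak{l}(z)]$ gives $\mathfrak{n}([[x,y],z])=0$. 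Injectivity of $\mathfrak{m}$ follows because $\mathfrak{m}(x)=0$ forces $\mathfrak{l}(x)\in \mathcal{Z}(\mathcal{T}')$, hence $x$ lies in the Lie triple center of $\mathcal{T}$, which coincides with $\mathcal{Z}(\mathcal{T})$ under our hypotheses; combined with Lemma \ref{xxsec3.3} this gives $\ker\mathfrak{m}=0$. Surjectivity when $\mathcal{T}'$ is central over $\mathcal{R}$ follows by showing $\mathfrak{m}(1)=\pm 1$ exactly as in the concluding paragraph of Proposition \ref{xxsec4.2}.

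The main obstacle I expect is the step $\lambda=\pm 1$. In Proposition \ref{xxsec4.2} the Lie preservation gave the polynomial identity $\mathfrak{l}([[x^2,y],[x,y]])=0$ outright, from which Lemma \ref{xxsec4.1} produced a clean contradiction to noncommutativity; here only triple commutators are preserved by $\mathfrak{l}$, so the ``killing'' identity must be manufactured from the interplay between the Jordan congruence for $\mathfrak{m}$ and the Lie triple congruence, and its incompatibility with the noncommutativity hypothesis (3) must be traced carefully through the loyalty of $M'$ and Lemmas \ref{xxsec3.1}--\ref{xxsec3.3}. The secondary obstacle is ruling out $\lambda=0$, where one must convert $\mathfrak{l}([[x^2,y],z])=\mu(x)\,\mathfrak{l}([[x,y],z])$ into an actual polynomial identity on $\mathcal{T}$ by exploiting the bijection $\mathfrak{l}\colon \mathcal{Z}(\mathcal{T})\to \mathcal{Z}(\mathcal{T}')$.
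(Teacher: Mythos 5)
A preliminary remark: the paper does not prove this theorem at all --- it is quoted from the companion preprint \cite[Theorem 4.3]{XiaoWei2} in the ``further research'' section, so there is no in-paper proof to compare against. The closest available model is Proposition \ref{xxsec4.2}, which is indeed the template you follow, and your opening moves are sound: $[x,x^2]=0$ together with surjectivity of $\mathfrak{l}$ gives $[\mathfrak{l}(x),\mathfrak{l}(x^2)]\in\mathcal{Z}(\mathcal{T}')$, so $y\mapsto\mathfrak{l}(\mathfrak{l}^{-1}(y)^2)$ is a centralizing trace; Theorem \ref{xxsec5.1} then yields $\mathfrak{l}(x^2)=\lambda\mathfrak{l}(x)^2+\mu(x)\mathfrak{l}(x)+\nu(x)$, and the definition $\mathfrak{m}=\lambda\mathfrak{l}+\tfrac{1}{2}\mu$ with the congruence $\mathfrak{m}(x^2)\equiv\mathfrak{m}(x)^2\pmod{\mathcal{Z}(\mathcal{T}')}$ is correct.

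The genuine gap is the step where you ``propagate the Jordan congruence'' to the triple commutator. Write $\epsilon(x,y)=\mathfrak{m}(x\circ y)-\mathfrak{m}(x)\circ\mathfrak{m}(y)\in\mathcal{Z}(\mathcal{T}')$. Using $[[a,b],c]=(b\circ c)\circ a-(a\circ c)\circ b$ one actually obtains
$$
\mathfrak{m}([[a,b],c])-[[\mathfrak{m}(a),\mathfrak{m}(b)],\mathfrak{m}(c)]-2\epsilon(b,c)\mathfrak{m}(a)+2\epsilon(a,c)\mathfrak{m}(b)\in\mathcal{Z}(\mathcal{T}'),
$$
and the terms $2\epsilon(b,c)\mathfrak{m}(a)-2\epsilon(a,c)\mathfrak{m}(b)$ are \emph{not} central, so you cannot simply subtract this from $\lambda^2\mathfrak{m}([[a,b],c])\equiv[[\mathfrak{m}(a),\mathfrak{m}(b)],\mathfrak{m}(c)]$ to conclude $(\lambda^2-1)\mathfrak{m}([[a,b],c])\in\mathcal{Z}(\mathcal{T}')$. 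What you really get is a functional identity entangling $(1-\lambda^2)$ with the unknown central-valued map $\epsilon$, and disentangling it --- i.e.\ proving $\epsilon=0$, which is also exactly what is needed to turn the congruence $\mathfrak{m}(x^2)\equiv\mathfrak{m}(x)^2$ into the equality at the end --- is the analogue of the long $\varepsilon=0$ computation in displays $(4.8)$--$(4.12)$ of Proposition \ref{xxsec4.2} and is the technical heart of the proof. Your proposal replaces it with the phrase ``a careful bookkeeping of the central corrections promotes the congruence into the equality,'' which is precisely the assertion requiring proof. The two further steps you yourself flag as obstacles are likewise left open: ruling out $\lambda=0$ demands manufacturing from $\mathfrak{l}([[x^2,y],z])=\mu(x)\,\mathfrak{l}([[x,y],z])$ a polynomial identity incompatible with hypothesis (3), and the identification of the Lie triple center of $\mathcal{T}$ with $\mathcal{Z}(\mathcal{T})$ (needed both there and for the injectivity of $\mathfrak{m}$) requires showing that a centralizing inner derivation of a triangular algebra with loyal $M$ vanishes. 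Until $\epsilon=0$ and these two points are established, the conclusion $\lambda=\pm1$ and the passage to a genuine Jordan homomorphism are not secured.
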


A question closely related to the above theorem is

\begin{question}\label{xxsec5.4}
Let $\mathcal{G}=\left[
\begin{array}
[c]{cc}%
A & M\\
N & B\\
\end{array}
\right]$ and $\mathcal{G}^\prime=\left[
\begin{array}
[c]{cc}%
A^\prime & M^\prime\\
N^\prime & B^\prime\\
\end{array}
\right]$ be generalized matrix algebras over $\mathcal{R}$ with
$1/2\in\mathcal{R}$. Let $\mathfrak{l}\colon
\mathcal{G}\longrightarrow\mathcal{G^\prime}$ be a Lie triple
isomorphism. Under what conditions does $\mathfrak{l}$ has a similar
decomposition expression ?
\end{question}

\bigskip


\end{document}